\newtheorem{theorem}{Theorem}
\newtheorem{proposition}{Proposition}
\newtheorem{corollary}{Corollary}
\newtheorem{lemma}{Lemma}
\newtheorem{definition}{Definition}
\newtheorem{remark}{Remark}
\newtheorem{assumption}{Assumption}
\numberwithin{equation}{section}
\newcommand{\vertiii}[1]{{\left\vert\kern-0.25ex\left\vert\kern-0.25ex\left\vert #1
    \right\vert\kern-0.25ex\right\vert\kern-0.25ex\right\vert}}
\newcommand{\kv}{\mathbb{E}}
\newcommand{\id}{\mathds{1}}
\newcommand{\xs}{\mathbb{P}}
\renewcommand{\L}{\mathbb{L}}
\newcommand{\ps}{\mathbb{V}\mbox{ar}}
\newcommand{\rhoa}{\mathbb{R}}
\newcommand{\nhoa}{\mathbb{N}^*}
\newcommand{\ic}{\mathbf{i}}
\newcommand{\usup}[1]{\,\underset{#1}{\sup}\,}
\newcommand{\pt}{\partial}
\newcommand{\ch}[1]{\left\|#1\right\|_2}
\newcommand{\vc}{\infty}
\newcommand{\bw}{\ell}
\newcommand{\kep}[2]{\langle #1,#2 \rangle}
\newcommand{\kepa}[1]{\langle #1,1 \rangle}
\newcommand{\intn}{\int_{\mathbb{R}}}
\DeclareMathOperator{\argmin}{argmin\,}
\DeclareMathOperator{\Beta}{Beta}
\newcommand{\mxi}{M^*(\xi)}
\newcommand{\mhxi}{\widehat{M_n^*(\xi)}}
\newcommand{\parinc}[2]{\parbox[c]{#1}{\includegraphics[width=#1]{#2}}}
\newcommand{\Co}{\mathcal{C}}
\def\L{\mathbb{L}}
\def\D{\mathbb{D}}
\def\N{\mathbb{N}}
\def\R{\mathbb{R}}
\def\E{\mathbb{E}}
\def\ind{{\mathchoice {\rm 1\mskip-4mu l} {\rm 1\mskip-4mu l}
{\rm 1\mskip-4.5mu l} {\rm 1\mskip-5mu l}}}
\def\eg{\textit{e.g.} }
\def\ie{\textit{i.e.} }
\def\etal{\textit{et al.} }
\def\tvc#1{\color{black}#1 \color{black}}
\newenvironment{proofof}[1]{\par
  \pushQED{\qed}%
  \normalfont \topsep6\p@\@plus6\p@\relax
  \trivlist
  \item[\hskip\labelsep
        \bfseries
    Proof of #1\@addpunct{.}]\ignorespaces
}{%
  \popQED\endtrivlist\@endpefalse
}
\author{Van H\`a Hoang\thanks{Faculty of Mathematics and Computer Science, University of Science, Ho Chi Minh City, Vietnam; \texttt{E-mail: hvha@hcmus.edu.vn}} \thanks{Vietnam National University, Ho Chi Minh City, Vietnam},
Thanh Mai Pham Ngoc\thanks{Laboratoire  de Math\'ematiques, UMR 8628, Universit\'e Paris Sud,
91405 Orsay Cedex France; \texttt{E-mail: thanh.pham$\_$ngoc@math.u-psud.fr}},
Vincent Rivoirard\thanks{Ceremade, CNRS, UMR 7534, Universit\'e Paris-Dauphine, PSL Research University, 75016 Paris,
France; E-mail: \texttt{Vincent.Rivoirard@dauphine.fr}},
Viet Chi Tran\thanks{LAMA, Univ Gustave Eiffel, UPEM, Univ Paris Est Creteil, CNRS, F-77447, Marne-la-Vallée, France; E-mail: \texttt{chi.tran@u-pem.fr}}}
\title{Nonparametric estimation of the fragmentation kernel based on a PDE stationary distribution approximation}
\date{\today}
\begin{document}
\maketitle

\begin{abstract}
We consider a stochastic individual-based model in continuous time to describe a size-structured population for cell divisions. This model is motivated by the detection of cellular aging in biology. We address here the problem of nonparametric estimation of the kernel ruling the divisions based on the eigenvalue problem related to the asymptotic behavior in large population. This inverse problem involves a multiplicative deconvolution operator. Using Fourier techniques we derive a nonparametric estimator whose consistency is studied. The main difficulty comes from the non-standard equations connecting the Fourier transforms of the kernel and the parameters of the model. A numerical study is carried out and we pay special attention to the derivation of bandwidths by using resampling.
 \end{abstract}

\textbf{Keywords:} Growth-fragmentation; cell division; nonparametric estimation; Kernel rule; deconvolution;

\textbf{MSC2010: }62G07; 92D25; 60J80; 45K05; 35B40\\

\section{Introduction}\label{section:intro}

We consider a population model with size structure in continuous time, where individuals are cells which grow continuously and undergo binary divisions after random exponential times at rate $R>0$. When a cell of size $x$ divides, it dies and is replaced by two daughter cells of sizes $\gamma x$ and $(1-\gamma)x$, where $\gamma$ is assumed here to be a random variable drawn according to a distribution with a density with respect to the Lebesgue measure on $[0,1]$: $\Gamma(d\gamma)=h(\gamma)d\gamma$. Between divisions, the sizes of the cells grow with speed $\alpha>0$. Because the two daughter cells are exchangeable, we assume that $h$ is a symmetric density with respect to $\gamma=1/2$. When $h$ is piked at $1/2$, then both daughters tend to have similar sizes, \textit{i.e.} the half of their mother's size. The more $h$ puts weight in the neighbourhood of $0$ and $1$, the more asymmetric the divisions are. They give birth to one small daughter and one big daughter with size close to its mother's.  In this article, we are interested in the estimation of this function $h$ in the case of large populations where the division tree is not observed. We stick to constant rate $R$ and speed $\alpha$ for the sake of simplicity.

{Our biological motivation for studying this model comes from the understanding of aging phenomena associated with cell division. When a cell that contains toxic content divides asymmetrically, the daughter that contains less toxicity can be viewed as younger in the sense that it has a higher fitness.  \tvc{This toxic content could be detrimental cellular components, such as proteins, extrachromosomal rDNA circles or possibly damaged mitochondria, etc.} The concentration of toxic content, that is an increasing function of time during the cell's life, can be seen as a `size'. Asymmetry during the divisions impacts the distributions of toxicity among cells and the shapes of trees describing the successive generations of cells in continuous time. Statistical evidence of asymmetrical divisions and biological consequences are described in Stewart \etal \cite{stewart}. See also Ackermann \etal \cite{ackermann}, Aguilaniu \etal \cite{aguilaniu}, Banks \etal \cite{Banks}, Doumic, Robert and co-authors \cite{robert14,DoumicOlivierRobert} or Moseley \cite{moseley} for discussions on these topics.
}

The population can be described by a stochastic individual-based (particle) model, where the population at time $t$ is represented by a random measure that is the sum of Dirac masses on $\R_+$ weighting the cells' sizes. Stochastic continuous time individual-based models of dividing cell populations with size-structure have made the subject of an abundant literature starting from Athreya and Ney \cite{Athreya70}, Harris \cite{Harris63}, Jagers \cite{Jagers69} etc. until recent years (\eg Bansaye \etal \cite{Bansaye11,Bansaye11a}, Cloez \cite{Cloez11}). Similar models in discrete time should also be mentioned (\eg \cite{Bansaye08,Bansaye13,Bercu09,BitsekiPenda15,Delmas10,Guyon07}). {For the individual-based model considered in this work, exact numerical simulations are possible. This model offers a convenient framework for statistics (see \eg Hoffmann and Olivier \cite{Hoffmann14}, Hoang \cite{Hoang2015,HoangPhD}).} It also connects to the partial differential equations (PDEs) that are usually used in population dynamics (see \cite{BansayeMeleard}).\\
We start from an initial population where the individuals are labelled in an exchangeable way by integers. The population of cells descending from these initial individuals can be seen as the forest of trees rooted in these initial individuals. We use the Ulam-Harris-Neveu notation to label the cells appearing in the population: if the mother has a label $i\in \mathcal{I}=\cup_{\ell\geq 1}\N \times \{0,1\}^{\ell-1}$, then the two daughters have labels $i$0 and $i$1 obtained by concatening the mother's label with integers 0 or 1. \\
The population at time $t$ is described by the point measure:
\begin{equation}\label{zk}Z^K_t=\frac{1}{K}\sum_{i\in V^K_t}\delta_{x_i(t)},
\end{equation}
where $\delta_x$ is the Dirac measure at $x$,  $V^K_t$ is the set of labels of living individuals at time $t$ and $K$ is a renormalizing parameter corresponding to the order of the initial population size. {In what follows}, the parameter $K$ will tend to $+\infty$. The individual with label $i\in V^K_t$ is represented by a Dirac mass weighting the size $x_i(t)$ of this individual at time $t$. \tvc{Notice that if we follow a lineage starting from a cell at time $0$ and choosing a daughter at random at each division, we recover an ergodic process with multiplicative jumps (see \cite[Section 2.2.2]{HoangPhD}) implying that the cell sizes are controlled over time, whatever the values of $\alpha$ and $R$.}\\

When the complete division forest is observed, we can associate to each division an independent random variable with distribution $h$: if $T_i$ is the division time of the cell $i$, then, we define $\Gamma_i=x_{i0}(T_i)/x_i(T_{i-})$, {where $x_i(T_{i-})=\lim_{t\to T_i, t<T_i} x_i(t)$}. Estimating the function $h$ from such a sample has been considered in \cite{HoangPhD, Hoang2015}. Here, we focus on the situation when the division tree is not completely observed. Following ideas from  Doumic \etal \cite{DoumicHoffmann12, Doumic12,Doumic17} or Bourgeron \cite{Bourgeron14} whose aim was to recover the division rate $R$ when the latter depends on the size, our strategy is to consider the PDE approximating the evolution of the measure-valued process $(Z^K_t)_{ t\geq 0}$ when $K$ is large. The long-time behavior of the solution of this PDE can be studied thanks to an eigenvalue problem. This yields a stationary distribution $N(x)dx$ from which we can assume that we have drawn a sample of $n$ i.i.d. random variables $X_1,\dots ,X_n$. \textcolor{black}{Along this paper, we do not take into account the approximation errors related to the asymptotic setting $K\rightarrow +\infty$ (the fluctuations associated to the convergence of $Z^K$ could be established following \cite{Bansaye11a,Tran14}) nor the approximation by the stationary solution. The latter assumption is discussed in the next section.} The function $h$ is then solution to an intricate inverse problem involving a multiplicative convolution operator. We use deconvolution techniques inspired by those used by Comte and Lacour \cite{Comte2011, ComteLacour}, Comte \etal \cite{comte2014deconvolution}, Neumann \cite{Neumann97} to construct and study a kernel estimator of $h$. Changing variables and taking Fourier transforms lead us to an equation where the regularities of the different terms are strongly related to the regularity of the unknown function $h$ to be estimated. \tvc{In the setting of a large population close to its stationary state, we define an original estimator of $h$.} The consistency of the estimator is studied, and simulations are performed. In particular, we discuss and illustrate numerically the bandwidth selection rules for the kernel estimator.

The paper is organized as follows. Section \ref{section:microscopique} describes the miscroscopic model. Section  \ref{section:estimation-h} tackles the problem of estimating the division kernel $h$. Section \ref{sec:simus} presents the numerical performances of our estimation procedure. Eventually, all the proofs are gathered in the Appendix.  \\

\paragraph{Notation:}
We denote by $\mathcal{M}_F(\R_+)$ the space of finite measures on $\R_+$ endowed with the weak convergence topology. For $\mu \in \mathcal{M}_F(\R_+)$ and for $f\in \Co_b(\R_+,\R)$ a bounded continuous real function on $\R_+$, $\langle \mu,f\rangle= \int_{\R_+} f d\mu$ is the integral of $f$ with respect to $\mu$. We denote by $\D(\R_+,\mathcal{M}_F(\R_+))$ the space of c\`adl\`ag functions from $\R_+$ to $\mathcal{M}_F(\R_+)$ embedded with the Skorokhod topology (e.g. \cite{Billingsley68}).\\
The set of integrable (resp. bounded) non-negative functions with respect to the Lebesgue measure on $\R_+$ is denoted by $\L^1(\R_+,\R_+)$ (resp. $\L^\infty(\R_+,\R_+)$).\\
The Fourier transform of any integrable function $f$ is defined by
$$f^*(\xi)=\int_{-\infty}^{+\infty} f(x) e^{\ic x\xi}dx,\quad \xi\in\R.$$

%%%%%%%%%%%%%%%%%%%%%%%%%%%%%%
%%%%%%%%%%%%%%%%%%%%%%%%%%%%%%
\section{Microscopic model\label{section:microscopique}}

Let $(\Omega, \mathcal{F},\mathbb{P})$ be a probability space, let $(Z^K_0)_{K\in \N^*}$ be a sequence of random point measures on $\R_+$ of the form \eqref{zk} that converges to $\xi_0\in \mathcal{M}_F(\R_+)$ in distribution and for the weak convergence topology on $\mathcal{M}_F(\R_+)$. We also assume that
\begin{equation}\label{eq:non-explosion-condition}
 \usup{K\in\N^*}\kv(\kepa{Z^K_0}^2)<+\vc.
\end{equation}
For each $K\in \N^*$ and initial condition $Z^K_0$ as above, we can represent the measure-valued process $(Z^K_t)_{t\geq 0}$ as the unique solution of a stochastic differential equation (SDE) driven by a Poisson point measure that satisfies the following martingale problem.

\begin{proposition}\label{prop:renorm1}
For a given $K\in \N^*$ and a test function $f: (x,s)\mapsto f(x,s) = f_s(x)$ $\in \mathcal{C}^{1,1}_b(\rhoa_+\times \rhoa_+,\rhoa)$, the process $(Z^K_t)_{t\geq 0}$ satisfies:
\begin{align}
\langle Z^K_t,f_t\rangle =  \langle Z_0^K, f_0\rangle & + \int_0^t\int_{\R_+} \Big(\pt_s f_s(x) + \alpha \pt_x f_s(x) \notag \\
& + R \int_0^1 \big(f_s(\gamma x)+f_s((1-\gamma)x)-f_s(x)\big)h(\gamma)d\gamma \Big)
 Z_s^K(dx)ds + M^{K,f}_t,\label{eq:martingale}
\end{align}where $(M^{K,f}_t)_{t\geq 0}$ is a square integrable martingale started at 0 with bracket:
\begin{equation}
\langle M^{K,f}\rangle_t= \frac{1}{K} \int_0^t\int_{\R_+}\int_0^1  R\big(f_s(\gamma x)+f_s((1-\gamma)x)-f_s(x)\big)^2 h(\gamma)d\gamma
 Z_s^K(dx)ds.
\end{equation}
\end{proposition}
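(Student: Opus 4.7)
The plan is to build $(Z^K_t)_{t\geq 0}$ pathwise as the unique solution of a stochastic differential equation driven by a Poisson point measure, and then read off the martingale problem by applying an Itô-type formula for pure jump processes with drift.

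First I would introduce on $(\Omega,\Ftr,\xs)$ a Poisson point measure $Q(ds,di,d\theta,d\gamma)$ on $\rhoa_+\times\mathcal{I}\times\rhoa_+\times[0,1]$ with intensity $ds\otimes n(di)\otimes d\theta\otimes h(\gamma)d\gamma$, independent of $Z^K_0$, where $n(di)$ is the counting measure on the Ulam--Harris--Neveu index set $\mathcal{I}$. Between jumps every living cell grows deterministically at speed $\alpha$; at an atom $(s,i,\theta,\gamma)$ of $Q$ with $i\in V^K_{s-}$ and $\theta\leq R$, the cell labeled $i$ is removed and replaced by its two offspring with sizes $\gamma x_i(s-)$ and $(1-\gamma)x_i(s-)$. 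A classical thinning / pathwise argument (in the spirit of Fournier--M\'el\'eard) produces a unique strong solution; the non-explosion condition \eqref{eq:non-explosion-condition} will be used to ensure the solution is well-defined for all $t\geq 0$.

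Second, for a test function $f\in\Co^{1,1}_b(\rhoa_+\times\rhoa_+,\rhoa)$, I would decompose $\kep{Z^K_t}{f_t}-\kep{Z^K_0}{f_0}$ into a continuous part and a jump part. Between two consecutive jump times, since $x_i(s)=x_i(s_0)+\alpha(s-s_0)$ for each living $i$, a chain-rule computation gives
\[
\frac{d}{ds}f_s(x_i(s))=\pt_s f_s(x_i(s))+\alpha\,\pt_x f_s(x_i(s)),
\]
and summing $\frac{1}{K}$ times this over $i\in V^K_s$ produces the drift integrand $\pt_s f_s(x)+\alpha\pt_x f_s(x)$ integrated against $Z^K_s(dx)\,ds$. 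At a division event $(s,i,\theta,\gamma)$ with $\theta\leq R$, the measure $\kep{Z^K_s}{f_s}$ jumps by $\frac{1}{K}\bigl(f_s(\gamma x_i(s-))+f_s((1-\gamma)x_i(s-))-f_s(x_i(s-))\bigr)$. Writing these jumps as an integral against $Q$ and compensating yields exactly the $R\int_0^1(\cdots)h(\gamma)d\gamma$ term of \eqref{eq:martingale}, plus the compensated-Poisson integral $M^{K,f}_t$.

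Third, the bracket of $M^{K,f}$ is obtained from the second-moment formula for compensated Poisson integrals: the quadratic variation receives one contribution per atom, equal to the square of the corresponding jump, so
\[
\langle M^{K,f}\rangle_t=\frac{1}{K^2}\int_0^t\!\!\int_{\mathcal{I}}\!\!\int_0^1 \ind_{i\in V^K_{s-}}R\bigl(f_s(\gamma x_i(s-))+f_s((1-\gamma)x_i(s-))-f_s(x_i(s-))\bigr)^2h(\gamma)\,d\gamma\,n(di)\,ds,
\]
and rewriting the sum over $i\in V^K_{s-}$ as $K\kep{Z^K_s}{\cdot}$ produces the prefactor $1/K$ announced in the statement. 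The final step is to check square integrability of $M^{K,f}$. I would take expectations in \eqref{eq:martingale} against $f\equiv 1$ to get $\kv\kepa{Z^K_t}\leq \kv\kepa{Z^K_0}e^{Rt}$ (since the division term vanishes for $f=1$), then iterate to bound $\kv(\kepa{Z^K_t}^2)$ by Gronwall, using the boundedness of $f$ together with \eqref{eq:non-explosion-condition}. The main obstacle is precisely this moment control uniformly in $K$: once it is in place, the decomposition and bracket computation are routine consequences of stochastic calculus for Poisson point measures.
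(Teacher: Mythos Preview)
Your approach is essentially the same as the paper's: construct $(Z^K_t)_{t\geq 0}$ pathwise as the solution of an SDE driven by a Poisson point measure indexed by $\mathcal{I}$, then obtain \eqref{eq:martingale} and the bracket by standard stochastic calculus for compensated Poisson integrals, with square integrability following from moment bounds via \eqref{eq:non-explosion-condition} and Gronwall. The paper uses intensity $R\,ds\,n(di)\,h(\gamma)d\gamma$ directly (no thinning coordinate $\theta$ is needed here since $R$ is constant), but your Fournier--M\'el\'eard style thinning is an equivalent and harmless variant.

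One small slip to fix: for $f\equiv 1$ the division term does \emph{not} vanish, since $f(\gamma x)+f((1-\gamma)x)-f(x)=1+1-1=1$; rather, the compensator contributes exactly $R\int_0^t\kepa{Z^K_s}\,ds$, and this is what produces the growth $\kv\kepa{Z^K_t}=\kv\kepa{Z^K_0}e^{Rt}$ after Gronwall. Your stated bound is correct, only the parenthetical justification is wrong.
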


{
The above equations in Proposition \ref{prop:renorm1} show the evolution of a microscopic random system of particles. The drift coefficient (r.h.s. in the first line of \eqref{eq:martingale}) indicates that each particle grows with speed $\alpha$. When a particle of size $x$ divides, it is replaced by two daughters of sizes $\gamma x$ and $(1-\gamma)x$, where $\gamma$ is drawn in the probability distribution with density $h$: this corresponds to the second line of \eqref{eq:martingale}. When the function $h$ is piked at $1/2$, the daughter cells have almost equal sizes at division, whereas when $h$ has large variance, it is likely to have an asymmetrical division.}\\
The detailed construction of the SDE satisfied by $(Z^K_t)_{t\geq 0}$ is given in Appendix \ref{app:renorm}, as well as a sketch of proofs for the results of this section. The martingale property and quadratic variation are direct consequences of stochastic calculus with the SDE. {The variance of the martingale part $M^{K,f}$ is of order $1/K$ and we heuristically expect a deterministic limit when $K\to +\vc$.} The following theorem states the limit of $(Z^K)_{K\in\nhoa}$ when $K\to +\vc$.
\begin{theorem}\label{th:large-population-limit}
If $(Z^K_0)_{K\in \N^*}$ converges in distribution to the deterministic measure
$\xi_0 \in\mathcal{M}_F(\rhoa_+)$ as $K\to +\vc$ then $(Z^K)_{K\in \nhoa}$ converges in distribution in $\mathbb{D}\left(\R_+,\mathcal{M}_F(\rhoa_+) \right)$
 as $K\to +\vc$ to the unique solution $\xi \in \mathcal{C}\left(\R_+,\mathcal{M}_F(\rhoa_+) \right)$ of
\begin{align}
 \kep{\xi_t}{f_t} = \kep{\xi_0}{f_0} & + \int_0^t\int_{\rhoa_+} \Big(\pt_s f_s(x) + \alpha \pt_x f_s(x) \notag\\
 & + R \int_0^1 \big(f_s(\gamma x)+f_s((1-\gamma)x)-f_s(x)\big)\, h(\gamma) d\gamma\Big)\xi_s(dx)ds, \label{eq:limit-PDE}
\end{align}
where $f_t(x)\in\mathcal{C}^{1,1}_b(\rhoa_+\times\rhoa_+,\rhoa)$ is a test function.
\end{theorem}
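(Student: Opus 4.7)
The plan is to follow the classical compactness-uniqueness scheme for measure-valued branching processes: establish uniform moment bounds, prove tightness in $\D(\R_+,\mathcal{M}_F(\R_+))$, identify all limit points via the martingale problem of Proposition \ref{prop:renorm1}, and finally show uniqueness of solutions to \eqref{eq:limit-PDE}.

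First I would obtain a uniform control on the total mass. Taking $f\equiv 1$ in \eqref{eq:martingale}, the process $\langle Z^K_t,1\rangle$ evolves only by jumps of size $1/K$ occurring at total rate $KR\langle Z^K_s,1\rangle$ (each binary division adds one cell). Combined with \eqref{eq:non-explosion-condition} and Grönwall's lemma, this yields
$$\sup_{K\in\N^*}\E\Bigl[\sup_{s\leq T}\langle Z^K_s,1\rangle^2\Bigr]<+\infty,\qquad \forall T>0.$$
Tightness of $(Z^K)_K$ then follows from the Roelly--Coppoletta criterion: it suffices to verify tightness in $\D(\R_+,\R)$ of $(\langle Z^K,f\rangle)_K$ for $f$ in a countable subset dense in $\Co_b(\R_+,\R)$. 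From Proposition \ref{prop:renorm1}, $\langle Z^K,f\rangle$ decomposes as a finite-variation term whose derivative is bounded by $(\alpha\|f'\|_\infty+3R\|f\|_\infty)\langle Z^K_s,1\rangle$, plus a martingale with bracket of order $1/K$; both parts satisfy Aldous's condition thanks to the moment bound. Identification of any limit point $\xi$ as a solution of \eqref{eq:limit-PDE} is then direct: Doob's inequality kills $M^{K,f}$ in $\L^2$, and the uniform integrability from Step 1 allows passing to the limit inside the drift functional, which is continuous in $Z$ for the relevant test functions. Since jumps of $Z^K$ have mass $1/K$, limit trajectories lie in $\Co(\R_+,\mathcal{M}_F(\R_+))$.

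The main obstacle is uniqueness of \eqref{eq:limit-PDE}, needed to upgrade subsequential convergence to full convergence. I would handle it by duality: for a fixed horizon $T>0$ and $g\in\Co_b^1(\R_+,\R)$, solve the backward linear transport-fragmentation equation
$$\pt_s u_s(x)+\alpha\pt_x u_s(x)+R\int_0^1\bigl(u_s(\gamma x)+u_s((1-\gamma)x)-u_s(x)\bigr)h(\gamma)d\gamma=0,\qquad u_T=g,$$
on $[0,T]$. Because the fragmentation operator $g\mapsto\int_0^1(g(\gamma\cdot)+g((1-\gamma)\cdot)-g)h(\gamma)d\gamma$ is a bounded perturbation (of norm $3R$ on $\Co_b$) of the translation semigroup generated by $\alpha\pt_x$, a Duhamel / fixed-point argument produces a $\mathcal{C}^{1,1}_b$ solution $u_s$ satisfying $\|u_s\|_\infty\leq\|g\|_\infty$ and $\|\pt_x u_s\|_\infty\leq e^{C(T-s)}\|g'\|_\infty$ with $C$ depending only on $R$ and $\alpha$. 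Plugging $u_s$ as a test function in \eqref{eq:limit-PDE} for two candidate solutions $\xi,\tilde\xi$ sharing the initial datum $\xi_0$ yields $\langle \xi_T-\tilde\xi_T,g\rangle=0$ for every such $g$, which by density gives $\xi_T=\tilde\xi_T$. The delicate technical point is establishing the regularity estimates on $u_s$ under the minimal hypothesis $h\in\L^1([0,1])$, in particular controlling $\|\pt_x u_s\|_\infty$ despite the rescaling $x\mapsto\gamma x$ inside the fragmentation term; the symmetry of $h$ about $1/2$ and the elementary bound $\gamma+(1-\gamma)=1$ are what ultimately close the estimate.
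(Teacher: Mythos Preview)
Your compactness argument follows the same scheme as the paper: moment control, Aldous--Rebolledo for the real-valued projections, vanishing of the martingale, and continuity of limit points from the jump size $1/K$. One point the paper handles more carefully than your sketch is the passage from vague to weak tightness on the non-compact state space $\R_+$: tightness of $\langle Z^K,f\rangle$ for $f$ in a dense subset of $\Co_b(\R_+)$ is not quite a well-posed criterion (and $\Co_b(\R_+)$ is not separable); the paper first obtains tightness in $\D(\R_+,(\mathcal{M}_F(\R_+),v))$ for the vague topology, and then upgrades to the weak topology by controlling mass at infinity via smooth cutoffs $\varphi_k\approx\ind_{[k,\infty)}$ and the M\'el\'eard--Roelly criterion. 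This step is not hard here (growth only adds mass, fragmentation conserves it), but it should be made explicit.

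Your uniqueness proof is correct but takes a heavier route than the paper's. The paper does not solve the full backward equation; it exploits the fact that the transport part has explicit characteristics. For fixed $t>0$ and $\varphi\in\Co^1_b(\R_+,\R)$, the test function $f_s(x)=\varphi(x+\alpha(t-s))$ satisfies $\partial_s f_s+\alpha\partial_x f_s\equiv 0$, so \eqref{eq:limit-PDE} reduces, for each solution $\xi^i$, to
\[
\langle\xi^i_t,\varphi\rangle=\langle\xi_0,\varphi(\cdot+\alpha t)\rangle+\int_0^t\!\!\int_{\R_+}\!R\!\int_0^1\!\bigl(f_s(\gamma x)+f_s((1-\gamma)x)-f_s(x)\bigr)h(\gamma)\,d\gamma\,\xi^i_s(dx)\,ds.
\]
Subtracting two solutions and taking the supremum over $\|\varphi\|_\infty\le 1$ yields $\|\xi^1_t-\xi^2_t\|_{TV}\le 3R\int_0^t\|\xi^1_s-\xi^2_s\|_{TV}\,ds$, and Gr\"onwall concludes. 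This bypasses entirely the construction and $\Co^{1,1}_b$-regularity analysis of the backward semigroup that you flag as the delicate technical point. Your duality argument is the right template when the drift does not have closed-form characteristics, but here it is unnecessary overhead.
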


When the limiting initial condition $\xi_0$ admits a smooth density with respect to the Lebesgue measure, the following proposition allows us to connect the measure-valued processes with the growth-fragmentation integro-differential equations usually introduced for cell divisions, \eg \cite{Perthame07, Doumic09}.
\begin{proposition}\label{prop:PDE}
If $\xi_0$ has a density $n_0\in \Co^{1}_b(\R_+,\R_+)$ with respect to the Lebesgue measure on $\R_+$, then $\forall t\in \rhoa_+$, $\xi_t(dx)$ admits a density $n(t,x)$ that is the unique solution of the PDE:
\begin{equation}\label{eq:growth-fragmentation}
\pt_t n(t,x) + \alpha\pt_x n(t,x) + Rn(t,x) = 2R\int_0^\vc n(t,y)h\left(\frac xy\right) \frac{dy}{y},
\end{equation}
where $h(x/y) = 0$ if $y<x$ (since $h$ is supported on $[0,1]$).
\end{proposition}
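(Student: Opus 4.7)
The plan is to deduce \eqref{eq:growth-fragmentation} from the weak formulation \eqref{eq:limit-PDE} by constructing a candidate density $n(t,x)$ via characteristics, checking that $n(t,x)\,dx$ satisfies \eqref{eq:limit-PDE}, and then invoking the uniqueness in Theorem \ref{th:large-population-limit} to identify $\xi_t$ with this absolutely continuous measure.

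First, to build $n$ I would use the method of characteristics. Integration along the curves $s\mapsto x-\alpha s$ recasts \eqref{eq:growth-fragmentation} as the Duhamel fixed-point equation
\[
n(t,x)=e^{-Rt}n_0(x-\alpha t)\,\ind_{x\geq \alpha t}+2R\int_0^t e^{-R(t-s)}\int_0^{\infty}n(s,y)\,h\!\left(\frac{x-\alpha(t-s)}{y}\right)\frac{dy}{y}\,ds
\]
on $\Co([0,T],\L^1(\R_+)\cap\L^\infty(\R_+))$. Since $h$ is a probability density supported on $[0,1]$, the fragmentation operator is bounded in $\L^1$, so Picard iteration combined with Gronwall yields a unique mild solution on every finite horizon. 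The $\Co^1_b$-regularity of $n_0$ propagates: differentiating the Duhamel formula in $x$ shows that $\partial_x n$ satisfies an integral equation of the same type and stays in $\L^1\cap\L^\infty$, while continuity in $t$ follows from dominated convergence. This produces a classical solution with the inherited boundary value $n(t,0)=0$ for $t>0$.

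Next, I would verify that $\tilde\xi_t(dx):=n(t,x)\,dx$ satisfies \eqref{eq:limit-PDE}. Multiplying \eqref{eq:growth-fragmentation} by a test function $f_s(x)\in\Co^{1,1}_b(\R_+\times\R_+,\R)$ and integrating in $x$, the transport term becomes $\alpha\int \partial_x f_s(x)\,n(s,x)\,dx$ after integration by parts (the boundary contributions vanish by $n(s,0)=0$ and integrability at infinity). For the fragmentation term, the symmetry $h(\gamma)=h(1-\gamma)$ and the changes of variable $u=\gamma x$ and $u=(1-\gamma)x$ give
\[
2R\int_{\R_+}f_s(x)\int_0^{\infty}n(s,y)\,h(x/y)\frac{dy}{y}\,dx=R\int_{\R_+}\int_0^1\bigl(f_s(\gamma x)+f_s((1-\gamma)x)\bigr)h(\gamma)\,d\gamma\,n(s,x)\,dx.
\]
Including the $\partial_s f_s$ contribution from $\partial_t\!\int f_t(x)n(t,x)\,dx$ and integrating in $s\in[0,t]$ reproduces \eqref{eq:limit-PDE} verbatim with $\xi_s$ replaced by $\tilde\xi_s$. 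The uniqueness part of Theorem \ref{th:large-population-limit} then forces $\xi_t=\tilde\xi_t$.

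The main obstacle is the construction step, which hinges on taming the singular $1/y$ weight in the fragmentation kernel. The rescue is that $h$ vanishes outside $[0,1]$, so the integral is effectively over $y\geq x-\alpha(t-s)$; undoing the change of variable back to $\gamma$ erases the $1/y$ factor and allows me to bound the operator by $\|h\|_1=1$ in $\L^1$ and by $\|h\|_\infty$ in $\L^\infty$. Once this is established, the fixed-point argument, the propagation of $\Co^1_b$-regularity, and the weak-form verification are all routine.
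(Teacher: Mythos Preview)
Your argument is sound and follows a genuinely different path from the paper's. The paper does not build the PDE solution first; it shows directly that each $\xi_t$ is absolutely continuous by plugging $f_s(x)=\varphi(x+\alpha(t-s))$ with $\varphi\geq0$ into \eqref{eq:limit-PDE} and dropping the negative term, which dominates $\langle\xi_t,\varphi\rangle$ by an integral of $\varphi$ against an explicit Lebesgue density. The resulting density then solves \eqref{eq:growth-fragmentation} in the distribution sense, with uniqueness outsourced to \cite{Perthame07}. Your construct-then-identify strategy is more self-contained (you invoke the uniqueness already established in Theorem~\ref{th:large-population-limit} rather than an external reference) and delivers the $\Co^1_b$ regularity of $n(t,\cdot)$ as a by-product; the paper's domination argument is shorter and avoids setting up a second fixed point.

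One correction to your last paragraph: the $\L^\infty$ bound on the fragmentation operator is not governed by $\|h\|_\infty$. After the substitution $\gamma=x/y$ one obtains $(Fn)(x)=\int_0^1 n(x/\gamma)\,h(\gamma)\,\gamma^{-1}\,d\gamma$, so the factor $1/y$ becomes $1/\gamma$ rather than disappearing, and $\|Fn\|_\infty\leq\|n\|_\infty\int_0^1 h(\gamma)\gamma^{-1}\,d\gamma$. Your $\L^\infty$ contraction and the propagation of $\Co^1_b$ therefore require hypothesis \eqref{hyp:temporaire}; this is worth flagging, since the paper's direct absolute-continuity argument does not need it at this stage.
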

\tvc{See Appendix~\ref{app:renorm} and \cite[Proposition 3.2.10]{HoangPhD} for the proof of this result.}
Besides the drift $\alpha$ associated with the continuous growth of individuals in time, the PDE \eqref{eq:growth-fragmentation} involves  the death term $Rn(t,x)$ and the birth term $2R\int_0^\vc n(t,y)h( x/y) dy/y$. These terms highlight  that a particle disappearing at $x$ is replaced by two particles whose sizes are fractions of $x$. The division is ruled by the density function $h$ and as explained in the introduction, we are interested in the estimation of this density function.\\
The long time behaviour of the solution of PDE \eqref{eq:growth-fragmentation} is well-known and presented in the following proposition. In this work, we shall base our statistical estimation of $h$ on the long time limit of the PDE. Notice that by change of variable in the integral, the right hand side of Equation \eqref{eq:growth-fragmentation} can also be rewritten as: $2R\int_0^1 n(t,x/u)h(u)\, du/u.$ We observe that a convenient assumption on the density $h$ is the following:
\begin{equation}
\int_0^1 h(u)\frac{du}{u}<+\infty.\label{hyp:temporaire}
\end{equation}
In this paper, a stronger assumption will be needed to obtain the consistency of our estimators.

%Under this assumption, Doumic and Gabriel proved \cite{Doumic10} (see Lemma 3 in the Appendix) that $\int x^2h(x)dx < 1/2$. Then, by general relative entropy principle (see \cite{Perthame05}), the division kernel $h$ satisfies the assumptions for the existence and uniqueness of the solution of the following eigenvalue problem (see \cite{Doumic10} or \cite{Perthame07} for more details) and:

\begin{proposition}\label{prop:renorm2}
Assume \eqref{hyp:temporaire}. Then, there exists a unique probability density $N\in \L^1(\R_+,\R_+)$ solving the following system:
\begin{equation}\label{eq:eigenvalue}
\begin{cases}
 &\alpha \pt_x N(x) + 2R \, N(x) = 2R\int_0^\vc N(y)h\left(\dfrac xy\right) \dfrac{dy}{y} ,\quad x\ge 0, \\
 &N(0) = 0,\quad \int N(x)dx =1,\quad N(x) \ge 0.
\end{cases}
\end{equation}
With $\rho = \| n_0 \|_{1}=\int_0^\vc n_0(u) du$ (where $n_0$ has been introduced in Prop. \ref{prop:PDE}), we have:
\begin{equation}\label{eq:tpslong}
 \int_0^\vc |n(t,x)e^{- R t} - \rho N(x)| dx \leq e^{-Rt} \big(\|g_0\|_1+\frac{6R}{\alpha}\|G_0\|_1\big),
\end{equation}where
$g_0(x)=n_0(x) - \rho N(x)$, and $G_0(x)=\int_0^x g_0(y)dy.$
\end{proposition}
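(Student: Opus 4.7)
The plan is to first establish existence and uniqueness of the stationary density $N$ via a fixed-point argument, then analyze the long-time behaviour of $n(t,\cdot)e^{-Rt}$ around $\rho N$ by introducing the deviation $g(t,x):=e^{-Rt}n(t,x)-\rho N(x)$ and its antiderivative $G(t,x):=\int_0^x g(t,y)\,dy$.

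For $N$, I rewrite the eigenvalue system, using the integrating factor $e^{2Rx/\alpha}$ and $N(0)=0$, as the integral equation
$$N(x)=\frac{2R}{\alpha}\int_0^x e^{-2R(x-z)/\alpha}\int_0^\infty N(y)\,h(z/y)\,\frac{dy}{y}\,dz.$$
Assumption \eqref{hyp:temporaire} ensures that the inner integral is well defined on the positive cone of $\L^1(\R_+,\R_+)$. A Krein--Rutman or Perron--Frobenius argument, adapted from the growth--fragmentation literature (Perthame, Michel--Mischler--Perthame, Doumic--Gabriel), yields a unique nonnegative eigenfunction, which is normalized into a probability density to obtain $N$.

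For the long-time bound, since $e^{Rt}N$ is a separated solution of \eqref{eq:growth-fragmentation}, linearity gives
$$\partial_t g+\alpha\partial_x g+2Rg=2R\int_0^\infty g(t,y)\,h(x/y)\,\frac{dy}{y},\qquad g(t,0)=0,\quad g(0,\cdot)=g_0.$$
Mass conservation (integrate in $x$) forces $\int g(t,\cdot)\,dx=\int g_0\,dx=0$, so $G(t,x)$ vanishes at both endpoints $x=0$ and $x=+\infty$. Integrating the $g$-equation from $0$ to $x$ and performing an integration by parts in $y$ (the boundary terms vanish because $G$ does) yields $\partial_t G+\alpha\partial_x G+2RG=\varphi$, with $\varphi(t,x)=2R\,x\int_0^\infty G(t,y)\,h(x/y)/y^2\,dy$. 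The symmetry identity $\int_0^1 u\,h(u)\,du=1/2$ combined with Fubini gives $\|\varphi(t)\|_1\le R\|G(t)\|_1$. Multiplying by $\mathrm{sgn}(G)$, integrating in $x$, and using the vanishing of $G$ at the boundary to kill the transport flux leads to $\frac{d}{dt}\|G(t)\|_1\le -R\|G(t)\|_1$, hence $\|G(t)\|_1\le e^{-Rt}\|G_0\|_1$.

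To transfer this decay to $\|g(t)\|_1$, I use the characteristic (Duhamel) representation of $g$ along the lines $x\mapsto x+\alpha t$. Splitting the cases $x\ge\alpha t$ (where the initial datum $g_0$ is sampled along the backward characteristic, producing the prefactor $e^{-2Rt}$) and $x<\alpha t$ (where the zero boundary condition $g(\cdot,0)=0$ is used), and taking $L^1$ norms, one obtains
$$\|g(t)\|_1\le e^{-2Rt}\|g_0\|_1+2R\int_0^t e^{-2R(t-s)}\|K[g(s)]\|_1\,ds,\qquad K[g](x)=\int_0^\infty g(y)\frac{h(x/y)}{y}\,dy.$$
The crucial step is to bound $\|K[g(s)]\|_1$ by a constant of order $R/\alpha$ times $\|G(s)\|_1$, via a second integration by parts in $y$ that exploits the compact support of $h$ in $[0,1]$ and the boundary regularity $G(t,y)=O(y^2)$ near $y=0$ inherited from $g(t,0)=0$. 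Plugging in $\|G(s)\|_1\le e^{-Rs}\|G_0\|_1$ and computing $\int_0^t e^{-2R(t-s)}e^{-Rs}\,ds\le e^{-Rt}/R$ then produces the stated bound $e^{-Rt}\bigl(\|g_0\|_1+(6R/\alpha)\|G_0\|_1\bigr)$. The principal obstacle is this last $K[g]$-estimate: the naive integration by parts produces an awkward $1/y$ weight, and recovering the factor $6R/\alpha$ requires a careful simultaneous use of the boundary regularity of $G$, the symmetry and support of $h$, and the transport scale $\alpha$.
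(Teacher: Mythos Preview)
Your treatment of existence/uniqueness of $N$ via the integral reformulation and Krein--Rutman is essentially the same as the paper's Steps~1--2, and your derivation of the $G$-equation together with the bound $\|\varphi(t)\|_1\le R\|G(t)\|_1$ (hence $\|G(t)\|_1\le e^{-Rt}\|G_0\|_1$) is correct and coincides with the paper's argument leading to~\eqref{etape3}.

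The divergence, and the genuine gap, is in the passage from $\|G(t)\|_1$ to $\|g(t)\|_1$. Your Duhamel scheme requires the static inequality
\[
\big\|K[g(s)]\big\|_1 \;\le\; \frac{C R}{\alpha}\,\|G(s)\|_1,\qquad K[g](x)=\int_0^\infty g(y)\,\frac{h(x/y)}{y}\,dy,
\]
but the operator $K$ does not involve $\alpha$ (or $R$) at all, and $G$ is simply the antiderivative of $g$; there is no mechanism by which the transport speed $\alpha$ can enter such a pointwise-in-time bound. Integration by parts in $y$ gives at best $\|K[g]\|_1\le C_h\int_0^\infty |G(t,y)|\,y^{-1}\,dy$, and the $y^{-1}$ weight cannot be removed by the boundary behaviour of $G$ alone (which in any case is only $o(y)$, not $O(y^2)$, from $g(t,0)=0$). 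Without this estimate your Duhamel inequality degenerates to $\|g(t)\|_1\le e^{-2Rt}\|g_0\|_1+2R\int_0^t e^{-2R(t-s)}\|g(s)\|_1\,ds$, which by Gronwall yields only $\|g(t)\|_1\le\|g_0\|_1$, i.e.\ no decay.

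The paper closes this gap by a different device: it sets $K(t,x):=\partial_t G(t,x)$ and observes that, since the $G$-equation has time-independent coefficients, $K$ satisfies the \emph{same} linear equation as $G$. Hence $\|K(t)\|_1\le e^{-Rt}\|K(0,\cdot)\|_1$, and one computes $K(0,x)=2R\int_0^1 G_0(x/u)h(u)\,du-2RG_0(x)-\alpha g_0(x)$, so $\|K(0,\cdot)\|_1\le 3R\|G_0\|_1+\alpha\|g_0\|_1$. Finally, reading off $\alpha g=-\partial_t G-2RG+\varphi$ from the $G$-equation and taking $L^1$-norms gives
\[
\alpha\|g(t)\|_1 \le \|K(t)\|_1+2R\|G(t)\|_1+\|\varphi(t)\|_1 \le e^{-Rt}\big(\alpha\|g_0\|_1+6R\|G_0\|_1\big),
\]
which is exactly~\eqref{eq:tpslong}. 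This is where the factor $6R/\alpha$ actually comes from: $\alpha$ is the coefficient of $\partial_x G=g$ in the $G$-equation, not a feature of any bound on the fragmentation operator.
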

Proposition \ref{prop:renorm2} shows that the renormalized population density $\rho^{-1} n(t,x)e^{- R t}$ converges exponentially fast, when the time $t$ tends to infinity, to a stationary density $N(x)$ that is obtained by solving an eigenvalue problem. The proof of Proposition \ref{prop:renorm2} is given in Appendix A. Notice that we do not have such a strong result if the division rate is not a constant. Another remark is that the right hand side of \eqref{eq:eigenvalue} is a multiplicative convolution between $y\mapsto N(y)/y$ and $h$. Multiplicative convolutions appear naturally in problems where independent random variables are multiplied (here the size of the cell undergoing division and the random variable of density $h$ ruling how the cell breaks into two daughters). Estimating $h$ can thus be seen as performing a multiplicative deconvolution. We explain in the next section the building of our statistical estimation procedure based on the results of this proposition.

%%%%%%%%%%%%%%%%%%%%%%%%%%%%%%%%%
%%%%%%%%%%%%%%%%%%%%%%%%%%%%%%%%%
\section{Estimation of the division kernel}\label{section:estimation-h}
%%%%%%%%%%%%%%%%%%%%%%%%%%%%%%%%%
\subsection{Estimation procedure and assumptions}\label{section:construction-estimator}
\subsubsection{Principle}
We consider the problem of estimating the density $h$ in the case of incomplete data of divisions. As explained previously, we shall construct an estimator of $h$ based on the stationary size distribution which results from the study of the large population limit $n(t,x)$. The long time behavior provides us an observation scheme for the estimation of the density $h$ in the statistical approach: since $e^{-R t}n(t,x)$ converges exponentially fast to $N(x)$ (up to a constant) as $t$ increases by Proposition~\ref{prop:renorm2}, when we pick $n$ cells randomly in the population at a large time $t$, we can assume that we have $n$ i.i.d observations $X_1, X_2, \ldots, X_n$ with distribution $N(x)dx$. We estimate $h$ from the data $X_1, \ldots, X_n$ and Equation \eqref{eq:eigenvalue}. {This experimental scheme has also been used in \cite{Doumic12} and \cite{Bourgeron14}.}

\par  {Starting with Equation \eqref{eq:eigenvalue}, the multiplicative convolution $\int_0^\vc N(y)h\left(\frac xy\right) \frac{dy}{y}$ leads to more intricate technical problems than for the classical additive convolution. }
 So, we apply a logarithmic change of variables to transform the multiplicative convolution in the right hand side of \eqref{eq:eigenvalue} into an additive one. Then, we classically apply the Fourier transform and work with products of functions in the Fourier domain. {We end up with a deconvolution problem which is more involved and quite different when compared with classical deconvolution problems (see Remark \ref{classic})}.  %In Bourgeron \textit{et al.} \cite{Bourgeron14}, they consider the equation \eqref{eq:eigenvalue} but for the case where the division rate is a function $B(x)$ and they aim to study a standard inverse problem of estimating $B(x)$. When dealing with the term $\int_0^{\vc} H(y)h\left(\frac{\cdot}{y}\right)\frac{dy}{y}$ with $H(y) = B(y)N(y)$, they apply the Mellin transform to replace multiplicative convolution by product. Here, we apply a change of variables, then Fourier transform to retrieve the density $h$.

%\noindent \textit{Changing variables}\\

\noindent  Let us now describe our estimation procedure in details.
By using the change of variable $x = e^u$ for $x>0$ and $u\in\rhoa$, we introduce the functions $$g(u) = e^uh(e^u),$$ and
\[
M(u) = e^uN(e^u), \quad D(u) = \partial_u\big(u\mapsto N(e^u)\big) = e^u N'(e^u).
\]
Equation \eqref{eq:eigenvalue} becomes
\begin{equation}\label{eq:eigenvalue-change-variable}
\alpha D(u) + 2R\, M(u) = 2R\big(M\star g)(u),
\end{equation}
where $\star$ denotes the standard convolution product, so
$$\big(M\star g)(u)=\int M(u-x)g(x)dx,\quad u\in\R.$$
We have $h(\gamma) = \gamma^{-1}g\big(\log(\gamma)\big)$ for $\gamma \in (0,1)$. Then, the estimator of $h$ will be obtained from the estimator of $g$ once we have obtained estimators for unknown functions $M$ and $D$.\\

\subsubsection{Assumptions on h}
 First, assumptions on the density $h$ are needed. Of course, since $h$ is the density of a symmetric probability distribution on $[0,1]$, it satisfies $\int h(x)dx =1$ and $\int xh(x)dx = 1/2$. For the proofs, we will also need the following condition.
\begin{assumption}\label{assump:moment-h}
The function $h$ is of class $\Co^\beta$ on $[0,1]$, for some $\beta> 3$: the function $h$ is $[\beta]$ times differentiable (where $[\beta]$ is the largest integer smaller than $\beta$) and the derivative of order $[\beta]$ is $\beta-[\beta]$ H\"older continuous.\\
Moreover, we assume that there exists a positive integer $\nu_0\geq 2$ such that for all $k\in \{0,\dots,\nu_0\}$, $h^{(k)}(0)=0$.
%and positive constant $C$ such that for any $t \in (0,1)$,
%\begin{equation}\label{eq:moment-h}
%\int_0^t h(x) dx \le Ct^{\nu_0}.
%\end{equation}
\end{assumption}
Under Assumption \ref{assump:moment-h}, $h$ can take positive values only on $(0,1)$, and the function $g$ introduced previously is supported on $\R_-$.
\begin{remark}
Assumption \ref{assump:moment-h} implies \eqref{hyp:temporaire}. For $t\in (0,1)$, by Taylor's formula, there exists indeed  $\theta\in (0,1)$ such that:
%$$h(t)=\sum_{k=\nu_0}^{[\beta]-1}\frac{1}{k !} h^{(k)}(0) t^k + \frac{1}{[\beta]} h^{[\beta]}(\theta t) t^{[\beta]}. $$
%Thus,
$$0\leq \frac{h(t)}{t}= \sum_{k=\nu_0+1}^{[\beta]-1}\frac{1}{k !} h^{(k)}(0) t^{k-1} + \frac{h^{[\beta]}(\theta t)}{[\beta]!} t^{[\beta]-1},$$which is integrable in the neighborhood of 0 (the sum in the right hand side being 0 if $\nu_0+1>[\beta]-1$.
\end{remark}
This remark shows that, under Assumption \ref{assump:moment-h}, the results of Proposition~\ref{prop:renorm2} are hence available to justify our approximation to start with a sample of i.i.d. random variables with density $N(x)$. We also have the following proposition that will be essential to show consistency and derive rates of convergence (the proof is in Appendix \ref{app:lemmas}):
\begin{proposition}\label{lem:regM}
Under Assumption \ref{assump:moment-h}:\\
(i) the first eigenvector $N$ of the eigenproblem \eqref{eq:eigenvalue} satisfies
\begin{equation}\label{eq:moment-condition}
\int_0^{+\vc} x^{-\nu}N(x)dx < +\vc\quad\text{ for } \nu\in \{1,\ldots,(\nu_0+2)\wedge ([\beta]+1)\}.
\end{equation}
(ii) $M$ is of class $\Co^{[\beta]}$ and its Fourier transform $M^*$ satisfies:
$$\limsup_{|\xi|\rightarrow +\infty}\left\{|\xi|^{[\beta]\wedge (\nu_0+3)}\times|M^*(\xi)|\right\}<+\infty.$$
(iii) The extension of $M^*(\xi)$ to the complex half-plane $\{\xi\in \mathbb{C} : \Im(\xi)<1\}$, $\xi \mapsto M^*(\xi)=\int_{\R} e^{\ic x\xi }M(x)dx$, is holomorphic and thus, $M^*$ admits only isolated zeros on this half-plane. Moreover, $M^*$ does not admit zeros on the real line.
\end{proposition}

The point (i) is crucial for proving the consistency. This proof relies on the use of the Rosenthal inequality (see Eq. \eqref{eq:bound-Zj}). This explains why we need $\nu\geq 4$ and hence $\nu_0\geq 2$ and $\beta>3$ in Assumption \ref{assump:moment-h}. The point (ii) establishes strong connections between the regularities of functions involved in \eqref{eq:eigenvalue}. Paradoxically, the more regular $h$ is, the faster $M^*$ converges to 0 at infinity, which may lead to some difficulties in view of the subsequent \eqref{eq:FT-g}. Fortunately, point (iii) shows that $M^*(\xi)$ does not vanish on the real line.\\

\subsubsection{Fourier transformation}
Notice that $g$ is square integrable since we have
\[
\intn g^2(u) du= \intn e^{2u}h^2(e^u) du = \int_0^{\infty} x h^2(x) dx= \int_0^1 x h^2(x) dx < +\infty.
\]
We can thus take the Fourier transform of both sides of equation \eqref{eq:eigenvalue-change-variable}. We obtain
$$
\alpha D^*(\xi) + 2 R\, M^*(\xi) = 2R M^*(\xi)\times g^*(\xi).
$$
Therefore, under Assumption \ref{assump:moment-h}, the Fourier transform of $g$ is obtained via the formula
\begin{equation}\label{eq:FT-g}
g^*(\xi) = \frac{\alpha D^*(\xi)}{2RM^*(\xi)} + 1,\quad \xi\in\rhoa.
\end{equation}
%\textcolor{blue}{Note that this equation is not standard since for classical inverse problem the transform of the noise function (which is known) appears at the denominator in place of $M^*$. Here $M$ is unknown and is connected to $g$, so cannot be dealt with as a noise.}
\begin{remark}\label{classic}
Note that Equation (\ref{eq:FT-g}) is not standard in classical inverse problems.
{Actually, classical deconvolution problems with independent noise (see \eqref{deconv-classique} below) can be transformed so that, in the Fourier domain, they can be written similarly to \eqref{eq:FT-g} and assumptions are made on the asymptotic behavior of the Fourier transform of the noise density, which is the analog of $M^*$ (see \cite{ComteLacour} for instance). Such assumptions are not possible here since the smoothness of $g$ is related to the smoothness of $M$ via \eqref{eq:eigenvalue-change-variable}. Assumptions on $M^*(\xi)$ when $|\xi|\to+\infty$ would break these strong relationships between $g$ and $M$. But these connections between $g$ and $M$ allow us to deduce the asymptotic behavior of $M^*(\xi)$ in Proposition~\ref{lem:regM} and the issues are circumvented.}
\end{remark}
%\begin{proposition}\label{prop:FT-g}
%Under Assumption \ref{assump:FT-M}, we have
%\begin{equation}\label{eq:FT-g}
%g^*(\xi) = \frac{\alpha D^*(\xi)}{2RM^*(\xi)} + \frac{\lambda + R}{2R},\quad \xi\in\rhoa.
%\end{equation}
%\end{proposition}
%\begin{proof}
%Recall Equation \eqref{eq:eigenvalue} and we set for $u,v\in\rhoa$
%\[
%x = e^u \text{ and } y = e^v.
%\]
%Then we get
%\[
%\alpha e^{-u}\partial_u N(e^u) + (\lambda + R)N(e^u) = 2R\intn N(e^v)h(e^{u-v})dv.
%\]
%Multiply both sides of the equation above by $e^{u}$, we obtain
%\[
%\alpha \partial_u N(e^u) + (\lambda + R)e^uN(e^u) = 2R\intn e^vN(e^v)e^{u-v}h(e^{u-v})dv.
%\]
%Hence we get
%$$
%\alpha D(u) + (\lambda + R)M(u) = 2R(M\star g)(u).
%$$
%\end{proof}

%%%%%%%%%%%%%%%%%%%%%%%%%%%%
%
% Moment condition
%
%%%%%%%%%%%%%%%%%%%%%%%%%%%%

\subsubsection{Estimators of $g$ and $h$}\label{section:estimators-g-h}
Given the sample of i.i.d random variables $X_1, \ldots, X_n$ with density function $x\mapsto N(x)$, we can consider the random variables $U_1, \ldots, U_n$ defined as $U_i = \log(X_i)$. These random variables are i.i.d of density function $u\mapsto M(u)=e^uN(e^u)$. In view of \eqref{eq:FT-g}, the purpose is  first to propose an estimator for $g^*$ and then to apply the inverse Fourier transform to obtain an estimator of $g$. Our procedure will be naturally based on $\widehat{M_n^*(\xi)}$ and $\widehat{D_n^*(\xi)}$, estimators of $M^*(\xi)$ and $D^*(\xi)$ respectively, and defined by
\begin{align}
\widehat{M_n^*(\xi)} &= \frac{1}{n}\sum_{j=1}^n e^{\ic\xi U_j},\label{eq:estimator-FT-M} \\
\widehat{D_n^*(\xi)} &= (-\ic\xi)\frac{1}{n}\sum_{j=1}^n e^{(\ic\xi-1)U_j}. \label{eq:estimator-FT-D}
\end{align}
Obviously, we have that $\widehat{M_n^*(\xi)}$ and $\widehat{D_n^*(\xi)}$ are unbiased estimators of $M^*(\xi)= \kv\big[e^{\ic\xi U_1} \big]$ and $D^*(\xi)= (-\ic\xi)\kv\left[e^{(\ic\xi - 1)U_1} \right]$ respectively. \\

%\textcolor{blue}{
%\begin{proposition}\label{prop:estimator-FT-M-D}[TO REMOVE]
%The estimators $\widehat{M_n^*(\xi)}$ and $\widehat{D_n^*(\xi)}$ are unbiased estimators of $M^*(\xi)$ and $D^*(\xi)$ %respectively.
%\end{proposition}
%\begin{proof}
%Since
%\begin{align*}
%M^*(\xi) = \int_\rhoa e^uN(e^u)e^{\ic u\xi}du = \kv\big[e^{\ic\xi U_1} \big],
%\end{align*}
%and
%\begin{align*}
%D^*(\xi) &= \int_\rhoa D(u)e^{\ic u\xi}du = \int_\rhoa \partial_u N(e^u)e^{\ic u\xi}du  \\
%&= e^{\ic\xi u}N(e^u)\Big|_{-\vc}^{+\vc} - \ic\xi\int_\rhoa N(e^u)e^{\ic\xi u}du = (-\ic\xi)\int_\rhoa %e^{u}N(e^u)e^{(\ic\xi-1)u}du \\
%&= (-\ic\xi)\kv\left[e^{(\ic\xi - 1)U_1} \right],
%\end{align*}
%then the unbiased estimators of the Fourier transform of $M$ and $D$ are given by \eqref{eq:estimator-FT-M} and %\eqref{eq:estimator-FT-D}.
%\end{proof}
%}

As usual in the nonparametric setting, the estimate of $g$ will be obtained by regularization technics. For density estimation, convoluting by an appropriate rescaled kernel is a natural methodology. Convolution is expressed by products in the Fourier domain. \tvc{Along the paper, we use the sinus cardinal kernel} defined by  $K(x)= \frac{\sin( x)}{ \pi x }$ for which $K^*({t})=\mathds{1}_{[-1,1]}(t)$. For $\bw >0$, define \tvc{the rescaled kernel} \[K_\bw(\cdot): = \frac{1}{\bw}K\left(\frac{\cdot}{\bw}\right).\]
%We set
%\[
%g_\ell=K_\ell \star g.
%\]
%From Equation \eqref{eq:FT-g}, a natural estimator $\widehat {g^*(\xi)}$ of $g^*(\xi)$  is defined by
%$$\widehat {g^*(\xi)}:= \frac {\alpha \widehat{D_n^*(\xi)}}{2R \widehat{M_n^*(\xi)}} +  \frac{\lambda +R}{2R}, $$
%where
%
%Since $g_\bw^* = K_\bw^*\times g^*$,
\begin{definition}
Given $\ell>0$, the estimate $\hat{g}_{n,\bw}$ of $g$ is defined through its Fourier transform:
\begin{equation}\label{eq:estigell}
\hat{g}_{n,\bw}^*(\xi) = K_\ell^*(\xi) \times  \left(  \frac {\alpha \widehat{D_n^*(\xi)}}{2R }\frac{\id_{\Omega_n(\xi)}}{\widehat{M_n^*(\xi)}} +  1   \right),
\end{equation}
where $\Omega_n(\xi) = \big\{|\widehat{M_n^*(\xi)}|\ge n^{-1/2} \big\}$ and  $\frac{\id_{\Omega_n(\xi)}}{\widehat{M_n^*(\xi)}}$is the truncated estimator of  $\frac{1}{\widehat{M_n^*(\xi)}}$:
\begin{equation}\label{eq:truncated-estimator-1/M*}
\dfrac{\id_{\Omega_n(\xi)}}{\widehat{M_n^*(\xi)}} = \begin{cases}
\dfrac{1}{\widehat{M_n^*(\xi)}}, &\text{ if } |\widehat{M_n^*(\xi)}|\ge n^{-1/2}, \\
0, &\text{otherwise}.
\end{cases}
\end{equation}
\end{definition}
The technique used to obtain (\ref{eq:estigell}) is similar to inverse truncation filtering (see \cite{Bertero} or \cite{Byrne}). Truncation is necessary to avoid explosion when $|\widehat{M_n^*(\xi)} |$ is close to $0$. Finally, taking the inverse Fourier transform of $\hat{g}_{n,\ell}^*$, we obtain the estimator of $g$.
\begin{definition}The estimator of $g$ is
\begin{equation}\label{eq:estimator-g}
\hat{g}_{n,\bw}(u) = \frac{1}{2\pi }\int_\rhoa \hat{g}_{n,\bw}^*(\xi)e^{-\ic u\xi}d\xi,\quad u\in\R_{-}.
\end{equation}
The estimator of the division kernel $h$ is deduced from $\hat{g}_{n,\bw}$:
\begin{equation}\label{eq:estimator-h}
\hat{h}_{n,\bw}(\gamma) = \gamma^{-1}\hat{g}_{n,\bw}\big(\log(\gamma)\big), \quad \gamma\in (0,1).
\end{equation}
\end{definition}

The main difficulty lies in the choice of $\ell$. This problem is dealt with subsequently. Deconvolution estimators have been studied in Comte and Lacour \cite{Comte2011, ComteLacour}, Comte \etal \cite{comte2014deconvolution}, Neumann \cite{Neumann97}. However, the difference and the difficulty in our problem come from the fact that the regularities of $g$ and $h$ are closely related to the functions $M$ and $D$ that solve the eigenvalue problem \eqref{eq:eigenvalue}, in particular through Equation \eqref{eq:FT-g}. This complicates the study of the rates of convergence. The next section studies the  quadratic risk of $\hat{g}_{n,\bw}$ and $\hat{h}_{n,\bw}$.
%%%%%%%%%%%%%%%%%%%%%%%
\subsection{Study of the quadratic risk}
%%%%%%%%%%%%%%%%%%%%%%%
\subsubsection{Relations between the risks of the estimators of $h$ and $g$}\label{sec:Relations}
The first goal is to connect the $\mathbb{L}^2$-risk of $\hat{h}_{n,\bw}$ and the $\mathbb{L}^2$-risk of $\hat{g}_{n,\ell}$. Using a randomized estimator, we can show the following result.
\begin{proposition}\label{prop:htilde}
For a Bernoulli random variable $\tau$ with parameter $1/2$ independent of $X_1,\dots,X_n$, let us define the randomized estimator
$$\check{g}_{n,\bw}(u)=\tau \hat{g}_{n,\bw}(u)+(1 -\tau)\tilde{g}_{n,\bw}(u), \mbox{ where }\tilde{g}_{n,\bw}(u)=e^u \hat{h}_{n,\bw}(1-e^u). $$
We have
\begin{equation}
\E\big[\| \hat{h}_{n,\ell}-h\|^2_2\big]=2 \E\big[\| \check{g}_{n,\bw} - g\|^2_2\big]= \E\Big[\int_{\R_-} e^{-u} \big(\hat{g}_{n,\bw}(u) -g(u) \big)^2 du \Big].\label{etape6}
\end{equation}
\end{proposition}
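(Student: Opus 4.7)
The plan is to prove the two equalities separately, exploiting (i) the change of variables $\gamma=e^u$ and $\gamma=1-e^u$ between integrals over $(0,1)$ and $\R_-$, and (ii) the symmetry $h(\gamma)=h(1-\gamma)$, which makes $\tilde{g}_\bw(u)=e^u\hat{h}_\bw(1-e^u)$ a second legitimate estimator of $g$ on $\R_-$.

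First I would establish the third equality. From $\hat{h}_\bw(\gamma)=\gamma^{-1}\hat{g}_\bw(\log\gamma)$ and $h(\gamma)=\gamma^{-1}g(\log\gamma)$, the change of variable $\gamma=e^u$, $d\gamma=e^u\,du$, immediately gives
\[
\|\hat{h}_\bw-h\|_2^2 = \int_0^1 \bigl(\hat{h}_\bw(\gamma)-h(\gamma)\bigr)^2 d\gamma = \int_{\R_-} e^{-2u}\bigl(\hat{g}_\bw(u)-g(u)\bigr)^2 e^u\,du,
\]
which after taking expectation is exactly the right-hand side of \eqref{etape6}.

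Next I would handle the middle equality. Because $\tau\in\{0,1\}$, we have $\tau^2=\tau$, $(1-\tau)^2=1-\tau$ and $\tau(1-\tau)=0$, so
\[
\bigl(\check{g}_\bw(u)-g(u)\bigr)^2 = \tau\bigl(\hat{g}_\bw(u)-g(u)\bigr)^2 + (1-\tau)\bigl(\tilde{g}_\bw(u)-g(u)\bigr)^2.
\]
Integrating over $\R_-$ and using the independence of $\tau$ from the sample together with $\E[\tau]=1/2$ yields
\[
2\,\E\bigl[\|\check{g}_\bw-g\|_2^2\bigr] = \E\bigl[\|\hat{g}_\bw-g\|_2^2\bigr] + \E\bigl[\|\tilde{g}_\bw-g\|_2^2\bigr].
\]
Thus it remains to show the pointwise identity
\[
\|\hat{g}_\bw-g\|_2^2 + \|\tilde{g}_\bw-g\|_2^2 = \|\hat{h}_\bw-h\|_2^2.
\]

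For this I would perform two changes of variables on $\R_-$. The substitution $\gamma=e^u$ treats $\hat g_\bw-g$ and, as computed above, gives $\|\hat{g}_\bw-g\|_2^2 = \int_0^1 \gamma\bigl(\hat{h}_\bw(\gamma)-h(\gamma)\bigr)^2 d\gamma$. For $\tilde g_\bw-g$, I would use the symmetry of $h$ to rewrite $g(u)=e^u h(e^u)=e^u h(1-e^u)$, so that
\[
\tilde{g}_\bw(u)-g(u) = e^u\bigl[\hat{h}_\bw(1-e^u)-h(1-e^u)\bigr],
\]
and then apply the substitution $\gamma=1-e^u$, giving $\|\tilde g_\bw-g\|_2^2 = \int_0^1 (1-\gamma)\bigl(\hat{h}_\bw(\gamma)-h(\gamma)\bigr)^2 d\gamma$. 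Summing the two expressions, the weights $\gamma$ and $1-\gamma$ telescope to $1$, producing $\|\hat{h}_\bw-h\|_2^2$ as required.

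The only non-routine point, and the one that makes the proposition work, is spotting that the symmetry of $h$ lets $\tilde g_\bw$ serve as a valid estimator of $g$, and that its squared error on $\R_-$ contributes precisely the complementary weight $1-\gamma$ needed to cancel the deficit $\gamma$ produced by $\hat g_\bw$. Everything else is bookkeeping via changes of variables and the idempotent identities for the Bernoulli weight $\tau$.
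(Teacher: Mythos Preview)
Your proof is correct and follows essentially the same approach as the paper: both rely on the changes of variables $\gamma=e^u$ and $\gamma=1-e^u$, the symmetry of $h$, and the Bernoulli identities for $\tau$ to obtain the complementary weights $\gamma$ and $1-\gamma$ summing to $1$. The only cosmetic difference is that you prove the rightmost equality in \eqref{etape6} directly via $\gamma=e^u$, while the paper obtains it by rewriting $\|\tilde g_\bw-g\|_2^2$ as an integral over $\R_-$ with weight $(1-e^v)/e^v$ and then averaging with $\|\hat g_\bw-g\|_2^2$.
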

The last equality in \eqref{etape6} shows that if we want to control the quadratic risk of $\hat{h}_{n,\bw}$ with respect to the Lebesgue measure, tight controls on the loss of $\hat{g}_{n,\bw}$ at $-\infty$ are needed. But, since $h$, as defined in our biological problem, is a symmetric function (as the daughter cells obtained after a division are exchangeable), it is natural to consider
\begin{equation}\label{eq:symm}
\hat h_{n,\bw}^{sym}(x)=\frac{1}{2}\big(\hat h_{n,\bw}(x)+\hat h_{n,\bw}(1-x)\big),
\end{equation}
whose quadratic risk is controlled by the quadratic risk of  $\hat{g}_{n,\bw}$ except at boundaries of the interval $[0,1]$, as proved by the next proposition.
\begin{proposition}\label{prop:h-hat}
Setting $m(x)=x(1-x)$, we have that
\begin{equation}
\int_0^1 \big(\hat h_{n,\bw}^{sym}(x) - h(x)\big)^2m(x)dx \leq \ch{\hat{g}_{n,\bw} - g}^2.
\end{equation}
\end{proposition}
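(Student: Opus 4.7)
The plan is to reduce the left-hand side pointwise to squares of $\hat{g}_\ell(u)-g(u)$ evaluated at $u=\log x$ and $u=\log(1-x)$, then change variables so that everything becomes an integral on $\R_-$.

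First, I use the symmetry of $h$ to write $h(x)=\tfrac12\big(h(x)+h(1-x)\big)$ and combine with the definition of $\hat h_\ell^{sym}$ to get
\[
\hat h_\ell^{sym}(x)-h(x)=\tfrac12\Big(\big(\hat h_\ell(x)-h(x)\big)+\big(\hat h_\ell(1-x)-h(1-x)\big)\Big).
\]
Next, by \eqref{eq:estimator-h} and the identity $h(\gamma)=\gamma^{-1}g(\log\gamma)$ valid on $(0,1)$, each difference is
\[
\hat h_\ell(x)-h(x)=\frac{\hat g_\ell(\log x)-g(\log x)}{x},\qquad \hat h_\ell(1-x)-h(1-x)=\frac{\hat g_\ell(\log(1-x))-g(\log(1-x))}{1-x}.
\]
Writing $\Delta(u):=\hat g_\ell(u)-g(u)$, applying the elementary inequality $(a+b)^2\le 2(a^2+b^2)$, and multiplying by $m(x)=x(1-x)$ gives
\[
\big(\hat h_\ell^{sym}(x)-h(x)\big)^2 m(x)\le \tfrac12\left(\frac{1-x}{x}\,\Delta(\log x)^2+\frac{x}{1-x}\,\Delta(\log(1-x))^2\right).
\]

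Now I integrate over $(0,1)$ and handle the two pieces by change of variable. For the first, set $u=\log x$, so $x=e^u$, $dx=e^u du$, $u\in(-\infty,0)$, and
\[
\int_0^1\frac{1-x}{x}\Delta(\log x)^2\,dx=\int_{-\infty}^0 (1-e^u)\,\Delta(u)^2\,du\le\int_{-\infty}^0\Delta(u)^2\,du,
\]
since $1-e^u\le 1$ for $u\le 0$. For the second integral, set $u=\log(1-x)$ (so $dx=-e^u du$, $u\in(-\infty,0)$) to obtain the same bound. Because $g$ is supported on $\R_-$, the right-hand side is at most $\|\hat g_\ell-g\|_2^2$, and combining the two halves yields the claimed inequality.

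The argument is essentially a direct calculation; the only thing requiring care is keeping track of the two contributions (from $x$ near $0$ and $x$ near $1$) and verifying that the weight $m(x)=x(1-x)$ is exactly what cancels the $x^{-2}$ and $(1-x)^{-2}$ factors coming from the relation $h(\gamma)=\gamma^{-1}g(\log\gamma)$. The use of symmetry of $h$ is what makes the bound symmetric and allows both terms to be controlled by the same $\mathbb{L}^2$-norm of $\hat g_\ell-g$.
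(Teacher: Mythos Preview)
Your proof is correct and follows essentially the same route as the paper: both use the symmetry $h(x)=h(1-x)$ to write $\hat h_\ell^{sym}-h$ as an average of two differences, apply $(a+b)^2\le 2(a^2+b^2)$, and then reduce to $\|\hat g_\ell-g\|_2^2$. The only cosmetic difference is that the paper stays in the $x$-variable, uses $m(x)=m(1-x)$ to merge the two integrals into $\int_0^1(\hat h_\ell(x)-h(x))^2 m(x)\,dx$, and then invokes $m(x)\le x$ together with the identity $\|\hat g_\ell-g\|_2^2=\int_0^1(\hat h_\ell(x)-h(x))^2 x\,dx$; you instead pass to $u=\log x$ earlier and bound $1-e^u\le 1$, which is the same inequality after the change of variable. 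One small remark: your final sentence ``because $g$ is supported on $\R_-$'' is not the reason the last step holds; the inequality $\int_{-\infty}^0\Delta^2\le\int_{\R}\Delta^2$ is trivial regardless of the support of $g$.
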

Propositions \ref{prop:htilde}  and \ref{prop:h-hat} are proved in Appendix \ref{app:risk-h}.
The previous result does not provide any control on boundaries of the interval $[0,1]$ but the consistency of $\hat{g}_{n,\bw}$ will establish the consistency of $\hat h_{n,\bw}^{sym}$ on every compact set of $(0,1)$. The study of the consistency of $\hat{g}_{n,\bw}$ is the goal of the next section.
%%%%%%%%%%%%%%%%%%%%%%%%%%%%%%%%%%
\subsubsection{Consistency of the estimator of $g$ for the quadratic-risk}\label{section:consistency-estimator-g}
This section is devoted to the theoretical study of the estimate $\hat g_{n,\ell}$. More precisely, we establish the $\L^2$-consistency of $\hat g_{n,\ell}$ under a suitable choice of the bandwidth $\ell$.

We first study the bias-variance decomposition of the $\mathbb{L}^2$-risk of $\hat{g}_{n,\bw}$. Recall that from Proposition~\ref{lem:regM}(iii), we have that under Assumption \ref{assump:moment-h}, $|M^*(\xi)|$ is strictly positive on every compact set of the real line $\xi\in [-A,A]$, $A>0$, and thus lower bounded by a positive constant on each of these intervals (that depends on $A$).

\begin{theorem}\label{prop:rate-of-convergence}
Under Assumption \ref{assump:moment-h}, there exists a positive constant $C < +\vc$ such that
\begin{equation}\label{eq:rate-of-convergence}
\kv\left[\ch{\hat{g}_{n,\bw} - g}^2 \right] \leq \ch{K_\bw\star g - g}^2 + \frac{C}{n}S(\bw),
\end{equation}
where
\[
S(\bw) = \Big\| \frac{K^*_\bw(\xi)\xi}{M^*(\xi)}\Big\|_2^2 + \Big\| \frac{K^*_\bw(\xi)}{M^*(\xi)}\Big\|_2^2.
\]
\end{theorem}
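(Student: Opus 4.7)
The plan is a bias--variance decomposition in the Fourier domain. Using Parseval's identity together with \eqref{eq:FT-g} and the definition \eqref{eq:estigell}, I first rewrite
\[
\hat g_\bw^*(\xi)-g^*(\xi) \;=\; \bigl(K_\bw^*(\xi)-1\bigr)\,g^*(\xi) \;+\; K_\bw^*(\xi)\,\frac{\alpha}{2R}\,W(\xi),\qquad W(\xi):=\widehat{D^*(\xi)}\,\frac{\id_{\Omega}}{\mhxi}-\frac{D^*(\xi)}{\mxi}.
\]
For the sinc-type kernel where $K_\bw^*=\id_{[-1/\bw,1/\bw]}$, the two summands have disjoint supports in~$\xi$, so after Plancherel their $\L^2$-contributions add without a cross term, yielding exactly $\ch{K_\bw\star g-g}^2$ (the bias) plus $(\alpha^2/(8\pi R^2))\int|K_\bw^*(\xi)|^2\E|W(\xi)|^2\,d\xi$ (the stochastic part); for a general compactly supported $K^*$ the residual cross term can be dominated by Cauchy--Schwarz against the stochastic part and absorbed into the final constant $C$.

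Everything then reduces to proving that for each $\xi$ in the compact support of $K_\bw^*$ one has $\E|W(\xi)|^2\lesssim(\xi^2+1)/(n|\mxi|^2)$. I decompose
\[
W \;=\; \frac{\widehat{D^*}-D^*}{\mhxi}\,\id_\Omega \;+\; D^*\,\frac{\mxi-\mhxi}{\mhxi\,\mxi}\,\id_\Omega \;-\; \frac{D^*}{\mxi}\,\id_{\Omega^c}
\]
and introduce the favorable event $\mathcal E(\xi):=\{|\mhxi-\mxi|\le|\mxi|/2\}$, on which $|\mhxi|\ge|\mxi|/2$. On $\mathcal E\cap\Omega$ the first summand contributes at most $4\,\Var(\widehat{D^*}(\xi))/|\mxi|^2\lesssim \xi^2/(n|\mxi|^2)$, using that $\Var(\widehat{D^*}(\xi))\le\xi^2\,\E[e^{-2U_1}]/n$ and $\E[e^{-2U_1}]=\int x^{-2}N(x)\,dx<+\infty$ by Lemma~\ref{lem:regM}(i); integrating $|K_\bw^*|^2$ against this yields the $\|K_\bw^*\xi/\mxi\|_2^2/n$ piece of $S(\bw)$. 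The second summand is bounded on $\mathcal E\cap\Omega$ by $4|D^*/\mxi|^2\,\Var(\mhxi)/|\mxi|^2\lesssim 1/(n|\mxi|^2)$, using the identity $D^*/\mxi=(2R/\alpha)(g^*-1)$ extracted from \eqref{eq:FT-g} together with the trivial bound $|g^*(\xi)|\le \|g\|_1=\int_0^1 h=1$; this yields the $\|K_\bw^*/\mxi\|_2^2/n$ piece of $S(\bw)$.

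The principal obstacle is controlling the residual contributions on the bad events $\mathcal E^c$ and $\Omega^c$, where the only deterministic bound $|1/\mhxi|\id_\Omega\le n^{1/2}$ would cost a factor $n$ and destroy the rate. Two ingredients rescue the argument. First, Lemma~\ref{lem:regM}(iii) combined with the continuity of $M^*$ yields a uniform positive lower bound $|\mxi|\ge c_\bw>0$ on the compact set $\supp(K_\bw^*)$; then Hoeffding's inequality applied to the bounded centered average $\mhxi-\mxi$ gives $\P(\mathcal E^c)\vee\P(\Omega^c)\le 2\exp(-c\,n\,c_\bw^2)$, which decays faster than any negative power of $n$. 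Second, higher moments of $\widehat{D^*}-D^*$ are controlled by Rosenthal's inequality, whose applicability rests on the finite inverse moments \eqref{eq:moment-condition} of $N$ supplied by Lemma~\ref{lem:regM}(i)---this is precisely why Assumption~\ref{assump:moment-h} demands $\nu_0\ge 2$ and $\beta>3$. Combined via Cauchy--Schwarz, these two facts show that the bad-event part of each summand is $o(1/n)$ and is absorbed in $C$. Reassembling the three pieces of $W$ yields the claimed bias--variance inequality.
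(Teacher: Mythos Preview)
Your overall strategy---Parseval, split off the bias $(K_\bw^*-1)g^*$, then decompose the stochastic term $W$ and control bad events via an exponential inequality plus Rosenthal---matches the paper's proof closely. The paper organizes the stochastic part around the quantity $\triangle(\xi)=\id_\Omega/\mhxi-1/\mxi$ and isolates a clean lemma (Lemma~\ref{lem:control-delta}) giving
\[
\E\big[|\triangle(\xi)|^{2p}\big]\le C_p\min\Bigl\{|\mxi|^{-2p},\ n^{-p}|\mxi|^{-4p}\Bigr\},\qquad p=1,2,
\]
proved by Bernstein's inequality applied \emph{pointwise in $\xi$}, together with the elementary fact $\sup_{x>0}x^ke^{-cx}<\infty$.

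Your write-up has one real gap, in the bad-event step. You invoke the uniform lower bound $|\mxi|\ge c_\bw>0$ on $\supp(K_\bw^*)$ and conclude $\P(\mathcal E^c)\vee\P(\Omega^c)\le 2\exp(-c\,n\,c_\bw^2)$, claiming this ``decays faster than any negative power of $n$'' and can be absorbed into $C$. But the constant $C$ in the theorem must be independent of $\bw$ (and of $n$): otherwise Corollary~\ref{TheoremConsistance}, which lets $\bw=\bw(n)\to 0$, does not follow. Since $|\mxi|\to 0$ as $|\xi|\to\infty$ (Lemma~\ref{lem:regM}(ii)), $c_\bw\to 0$ as $\bw\to 0$, and there is no reason for $n\,c_\bw^2\to\infty$; for instance $c_\bw\asymp n^{-1/2}$ would leave $\exp(-cnc_\bw^2)$ bounded away from $0$. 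Concretely, for the piece $|D^*/\mxi|^2\P(\Omega^c)$ you need $\P(\Omega^c)\lesssim (n|\mxi|^2)^{-1}$ \emph{for every} $\xi$ in the support, including $\xi$ near $0$ where $|\mxi|\approx 1$; your uniform bound only gives $\exp(-cnc_\bw^2)$, and $n\exp(-cnc_\bw^2)$ need not be bounded.

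The fix is exactly what the paper does: apply Hoeffding/Bernstein pointwise to get $\P(\mathcal E^c(\xi))\le 2\exp(-c\,n|\mxi|^2)$, and then use $\sup_{x>0}x^k e^{-cx}<\infty$ with $x=n|\mxi|^2$ to obtain $\P(\mathcal E^c)\lesssim (n|\mxi|^2)^{-k}$ for whichever $k$ you need ($k=1$ for the $D^*/\mxi$ piece, $k=2$ after Cauchy--Schwarz with the Rosenthal fourth-moment bound for the $(\widehat{D^*}-D^*)/\mhxi$ piece). With this change your decomposition goes through with an absolute constant; the uniform bound $c_\bw$ is never needed, and Lemma~\ref{lem:regM}(iii) is used only implicitly to ensure $1/\mxi$ is locally bounded so that $S(\bw)<\infty$.
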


Then the following corollary gives the $\L^2$-consistency of the estimator $\hat g_{n,\ell}$.

\begin{corollary}\label{TheoremConsistance}
We suppose that Assumption \ref{assump:moment-h} is satisfied and the kernel bandwidth $\ell=\ell(n)$  satisfies $\lim\limits_{n \to +\infty} \ell =0 $. Provided that
\begin{equation}\label{eq:limit-variance-term-M*}
\lim \limits_{n \to +\infty} \frac{1}{n} \left( \Big\| \frac{K^*_\bw(\xi)\xi}{M^*(\xi)}\Big\|_2^2 + \Big\| \frac{K^*_\bw(\xi)}{M^*(\xi)}\Big\|_2^2\right) =  0,
\end{equation}
we have
\begin{equation}\label{eq:consistance}
 \lim \limits_{n \to +\infty} \mathbb{E} \left [\| \hat g_{n,\ell} - g \|^2_2 \right] = 0.
\end{equation}
\end{corollary}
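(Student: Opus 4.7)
The plan: Theorem \ref{prop:rate-of-convergence} already supplies the bias–variance decomposition
\[
\kv\!\left[\ch{\hat{g}_\bw - g}^2\right] \;\leq\; \ch{K_\bw\star g - g}^2 \;+\; \frac{C}{n}\,S(\bw),
\]
so it suffices to show that each of the two terms on the right tends to $0$ under the assumptions $\ell(n)\to 0$ and \eqref{eq:limit-variance-term-M*}. The variance term $S(\ell)/n$ vanishes by exactly that hypothesis, so the entire remaining burden is to prove that the deterministic bias term $\ch{K_\bw\star g - g}^2$ tends to $0$ as $\ell\to 0$.

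For the bias, I would apply Plancherel's identity. The function $g$ lies in $\mathbb{L}^2(\mathbb{R})$ (as computed just before equation \eqref{eq:FT-g}), and $K_\ell\star g \in \mathbb{L}^2(\mathbb{R})$ as well since $K_\ell \in \mathbb{L}^2$. Hence
\[
\ch{K_\bw\star g - g}^2 \;=\; \frac{1}{2\pi}\int_{\R} |K^*_\ell(\xi)-1|^2\,|g^*(\xi)|^2\,d\xi,
\]
where $K^*_\ell(\xi)=K^*(\ell\xi)$. Because $K$ is a kernel with $\int K = 1$ we have $K^*(0)=1$, so for every fixed $\xi\in\R$, $K^*_\ell(\xi)\to K^*(0)=1$ as $\ell\to 0$ (for the sinc kernel one even has $K^*_\ell(\xi)=1$ as soon as $\ell|\xi|\leq 1$). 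Since $K^*$ is bounded and compactly supported, $|K^*_\ell(\xi)-1|^2\leq (1+\|K^*\|_\infty)^2$ uniformly in $\xi$ and $\ell$, and $|g^*|^2$ is integrable. Dominated convergence then delivers $\ch{K_\bw\star g - g}^2\to 0$ as $\ell\to 0$.

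Combining the two steps with the hypothesis $\ell(n)\to 0$ proves \eqref{eq:consistance}. There is no genuine obstacle here: the real work — controlling the stochastic error uniformly in $\xi$ through the truncation on $\widehat{M^*(\xi)}$, and exploiting Lemma \ref{lem:regM} to guarantee that $M^*$ does not vanish on the real line — has already been absorbed into the statement of Theorem \ref{prop:rate-of-convergence}. The only mild point to verify when writing the bias argument cleanly is the standard fact that $(K_\ell)_{\ell>0}$ forms an approximate identity in $\mathbb{L}^2$, which is precisely what the dominated convergence argument above encodes.
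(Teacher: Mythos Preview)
Your proposal is correct and follows exactly the same route as the paper: apply the bias--variance bound of Theorem~\ref{prop:rate-of-convergence}, kill the variance term by hypothesis~\eqref{eq:limit-variance-term-M*}, and send the bias $\ch{K_\bw\star g - g}^2$ to zero using that $(K_\ell)_{\ell>0}$ is an approximate identity in $\mathbb{L}^2$. The paper simply invokes ``well-known results on kernel density'' for the bias, whereas you spell out the Plancherel/dominated convergence argument; one cosmetic point is that ``$K_\ell\in\mathbb{L}^2$'' does not by itself force $K_\ell\star g\in\mathbb{L}^2$, but your Fourier-side computation already handles this since $K^*_\ell$ is bounded and $g^*\in\mathbb{L}^2$.
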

The proof of Corollary \ref{TheoremConsistance} is straightforward. Indeed, due to the well-known results on kernel density, we have $\lim \limits_{n \to +\infty}  \ch{K_\bw\star g - g}^2 = 0$ and under the assumptions of the corollary we have for the variance term $\lim_{n \to +\infty} n^{-1} S(\ell) = 0$. Thus we get the result \eqref{eq:consistance}.
The proof of Theorem \ref{prop:rate-of-convergence} is given in Appendix \ref{app:main-theorems}.  Note that under Assumption \ref{assump:moment-h}, we have by Proposition \ref{lem:regM} that $|M^*(\xi)|=O(|\xi|^{-([\beta]\wedge (\nu_0+3))})$ when $|\xi|\to+\infty$. If we have $|M^*(\xi)|\sim C |\xi|^{-([\beta]\wedge (\nu_0+3))}$, for a constant $C>0$, a bandwidth $\ell$ can be easily derived. Indeed,
$$K_\bw^*(\xi)=K^*(\bw\xi)=\mathds{1}_{[-\bw^{-1},\bw^{-1}]}(\xi)$$ and
$$
\Big\| \frac{K^*_\bw(\xi)\xi}{M^*(\xi)}\Big\|_2^2=\int_{-\bw^{-1}}^{\bw^{-1}}\frac{\xi^2}{|M^*(\xi)|^2}d\xi=O(\bw^{-(3+2([\beta]\wedge (\nu_0+3)))}),$$
and then, Assumption \eqref{eq:limit-variance-term-M*} is satisfied if
\[
\ell^{-1}=o\left(n^{\frac{1}{3+2([\beta]\wedge (\nu_0+3))}}\right).
\]
We obtain convergences rates for the quadratic risk of $\hat g_{n,\ell} $ under additional smoothness properties for the density $g$. For this purpose, we introduce Sobolev spaces defined as follows.
\begin{definition}\label{assump:sobolev-g}
We consider Sobolev spaces $S(\beta, L)$ defined as the class of integrable functions $ f : \R \rightarrow \R $ satisfying
\[
\int |f^*(t)|^2(1+t^2)^{\beta}dt \leq L^2.
\]
\end{definition}
We then obtain the following result.
\begin{proposition}\label{vitesse:g} If $g \in S(\beta,L)$ and $|M^*(\xi)|\sim C |\xi|^{-([\beta]\wedge (\nu_0+3))}$, for a constant $C>0$,  then we have
\[
\kv\left[\ch{\hat{g}_{n,\bw} - g}^2 \right] = O\left(n^{-\frac{2\beta}{2\beta +2([\beta]\wedge (\nu_0+3)) + 3}}\right).
\]
\end{proposition}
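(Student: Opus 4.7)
The plan is to combine Theorem \ref{prop:rate-of-convergence} with the Sobolev assumption on $g$, using the sinc kernel $K(x)=\sin(x)/(\pi x)$ so that $K_\ell^*(\xi)=\mathds{1}_{[-1/\ell,1/\ell]}(\xi)$. The proof then reduces to bounding the bias $\|K_\ell\star g-g\|_2^2$ using the Sobolev regularity $g\in S(\beta,L)$, bounding the variance term $n^{-1}S(\ell)$ using the assumed polynomial decay $|M^*(\xi)|\sim C|\xi|^{-\nu}$ with $\nu=[\beta]\wedge(\nu_0+3)$, and finally optimizing the bandwidth $\ell$ to balance both terms.

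For the bias, I would apply Parseval's identity to write
\[
\|K_\ell\star g-g\|_2^2=\frac{1}{2\pi}\int_{|\xi|>1/\ell}|g^*(\xi)|^2\,d\xi
=\frac{1}{2\pi}\int_{|\xi|>1/\ell}\frac{|g^*(\xi)|^2(1+\xi^2)^\beta}{(1+\xi^2)^\beta}\,d\xi,
\]
and bound the denominator below by $(1+\ell^{-2})^\beta\geq \ell^{-2\beta}$ to obtain $\|K_\ell\star g-g\|_2^2\leq L^2\ell^{2\beta}/(2\pi)$. For the variance, the hypothesis $|M^*(\xi)|\sim C|\xi|^{-\nu}$ yields the estimate already computed in the discussion preceding the proposition, namely
\[
\Big\|\frac{K_\ell^*(\xi)\xi}{M^*(\xi)}\Big\|_2^2=O\bigl(\ell^{-(3+2\nu)}\bigr),\qquad
\Big\|\frac{K_\ell^*(\xi)}{M^*(\xi)}\Big\|_2^2=O\bigl(\ell^{-(1+2\nu)}\bigr),
\]
the first term being dominant, so that $S(\ell)=O(\ell^{-(3+2\nu)})$. (One must check that the behavior of $M^*$ near the origin poses no integrability problem, but Lemma \ref{lem:regM}(iii) guarantees that $M^*$ does not vanish on the real line and hence $1/|M^*|$ is bounded on any fixed compact neighborhood of $0$, so the integrals are controlled by their tail behavior.)

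Putting these two estimates into Theorem \ref{prop:rate-of-convergence} gives
\[
\mathbb{E}\bigl[\|\hat g_\ell-g\|_2^2\bigr]=O\bigl(\ell^{2\beta}\bigr)+O\bigl(n^{-1}\ell^{-(3+2\nu)}\bigr).
\]
The two terms are of the same order when $\ell^{2\beta}=n^{-1}\ell^{-(3+2\nu)}$, i.e.\ $\ell=n^{-1/(2\beta+2\nu+3)}$, yielding the announced rate $n^{-2\beta/(2\beta+2([\beta]\wedge(\nu_0+3))+3)}$.

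I do not expect any serious obstacle here since the argument is a routine bias-variance balance once Theorem \ref{prop:rate-of-convergence} is in hand. The only subtle point is making sure the variance bound is uniform in $\ell$, which requires handling the behavior of $1/|M^*|$ near $\xi=0$; this is where Lemma \ref{lem:regM}(iii) (non-vanishing of $M^*$ on $\mathbb{R}$) is essential. The choice of the sinc kernel is convenient but not crucial: any kernel with Fourier transform supported in $[-1,1]$ and equal to $1$ near the origin would give the same rate, up to multiplicative constants.
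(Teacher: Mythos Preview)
Your proposal is correct and follows essentially the same route as the paper's proof: sinc kernel, Sobolev bound on the bias (the paper cites Comte--Lacour for this, while you spell out the Parseval argument directly), the already-computed estimate $S(\ell)=O(\ell^{-(3+2\nu)})$ for the variance, and the standard bias--variance trade-off giving $\ell\asymp n^{-1/(2\beta+2\nu+3)}$. Your remark on the role of Lemma~\ref{lem:regM}(iii) near $\xi=0$ is a small clarification that the paper leaves implicit.
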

The rate of convergence of Proposition \ref{vitesse:g} is the usual rate of convergence for ill-posed inverse problems involving a derivative and an ordinary smooth noise with a polynomial decay of  order $[\beta]\wedge (\nu_0+3)$. This result shows good theoretical performances of our procedure.
%%%%%%%%%%%%%%%%%%%%%%%%%%%%%%%%%%%%%%%%%%%%%%%%%%%%%%%%%%%%%%%%%%%%%%%%%%%
\section{Numerical simulations}\label{sec:simus}
\subsection{\tvc{Influence of the preliminary estimators $\widehat{M_n^*}$ and $\widehat{D_n^*}$ on the performances of $\hat h_{n,\ell}$ and $\hat g_{n,\ell}$}}

In this section, we study the numerical performances of our estimation procedure.

In the literature (\eg \cite{stewart,wang}), it is possible to obtain real datasets of sample size $n=30,000$ or even a larger: in \cite{stewart}, the authors followed divisions of E. coli and obtained a complete record of measurements of 35,049 cells, in \cite{wang}, the authors introduced their experimental procedures and techniques that allow to obtain a real dataset of $10^7$ cells. Therefore, the simulations presented here are performed on simulated samples of sizes $n$ varying from 1,000 to 30,000.

We consider the density of the $\Beta(2,2)$-distribution and the density of the truncated normal distribution on $[0,1]$ with mean $1/2$ and variance $0.25^2$, respectively denoted $h_1$ and $h_2$. The density $h_1$ is proportional to $x(1-x)\id_{[0,1]}(x)$ and $h_2$ has the following form:
\[
h_2(x) = \frac{\phi\left(\frac{x - \mu}{\sigma} \right)}{\sigma \left(\Phi\Big(\frac{1-\mu}{\sigma} \Big) -  \Phi\Big(\frac{-\mu}{\sigma} \Big)\right)},\quad x\in [0,1],
\]
where $\mu = 0.5$, $\sigma = 0.25$ and $\phi(\cdot)$ and $\Phi(\cdot)$ are respectively the density and the cdf  of the standard normal distribution. Furthermore, for all simulations we take $\alpha = 0.7$ and  $R = 1$. Figures \ref{fig:h1N1} and \ref{fig:h2N2} show $h_1$, $h_2$ and their corresponding stationary densities $N_1$, $N_2$. The stationary densities are obtained by solving numerically the PDE \eqref{eq:growth-fragmentation} using the method presented in Doumic {\it et al.} \cite{Doumic09}.

\begin{figure}[ht!]
\centering
\includegraphics[scale=0.4]{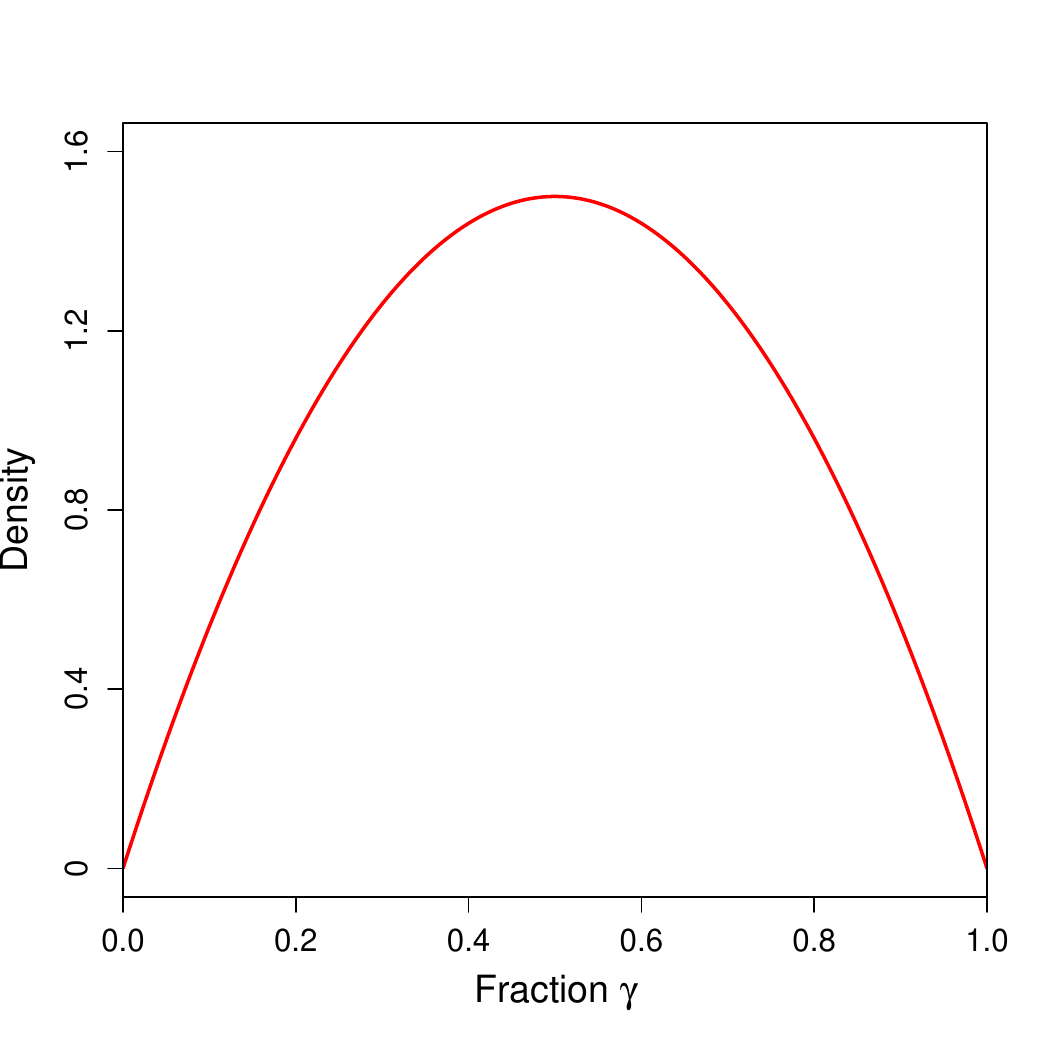}
\includegraphics[scale=0.4]{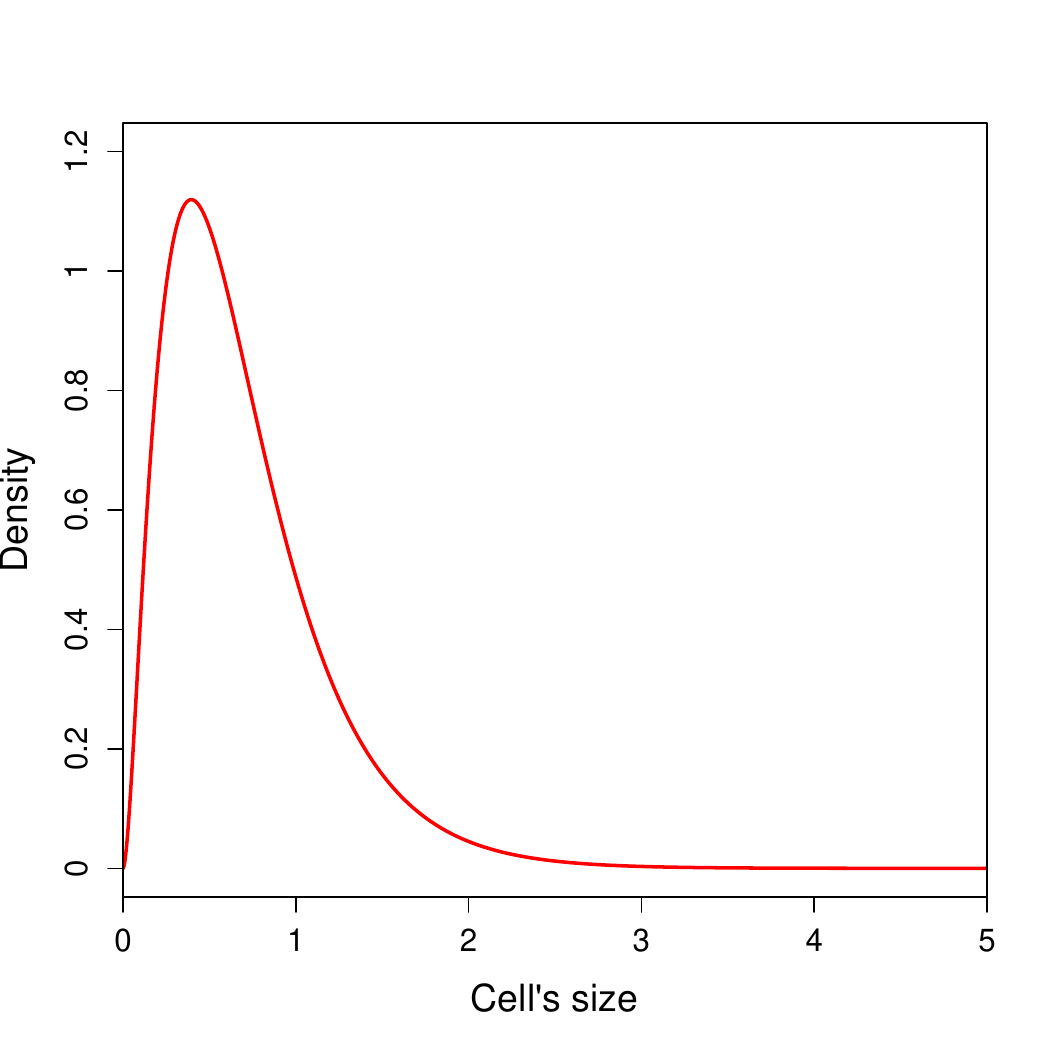}\vspace*{-0.25in}
\caption{\it\footnotesize The $\Beta(2,2)$ density $h_1$ (left) and its corresponding stationary density $N_1$ (right). \label{fig:h1N1}}
\end{figure}

\begin{figure}[htbp]
\centering
\includegraphics[scale=0.4]{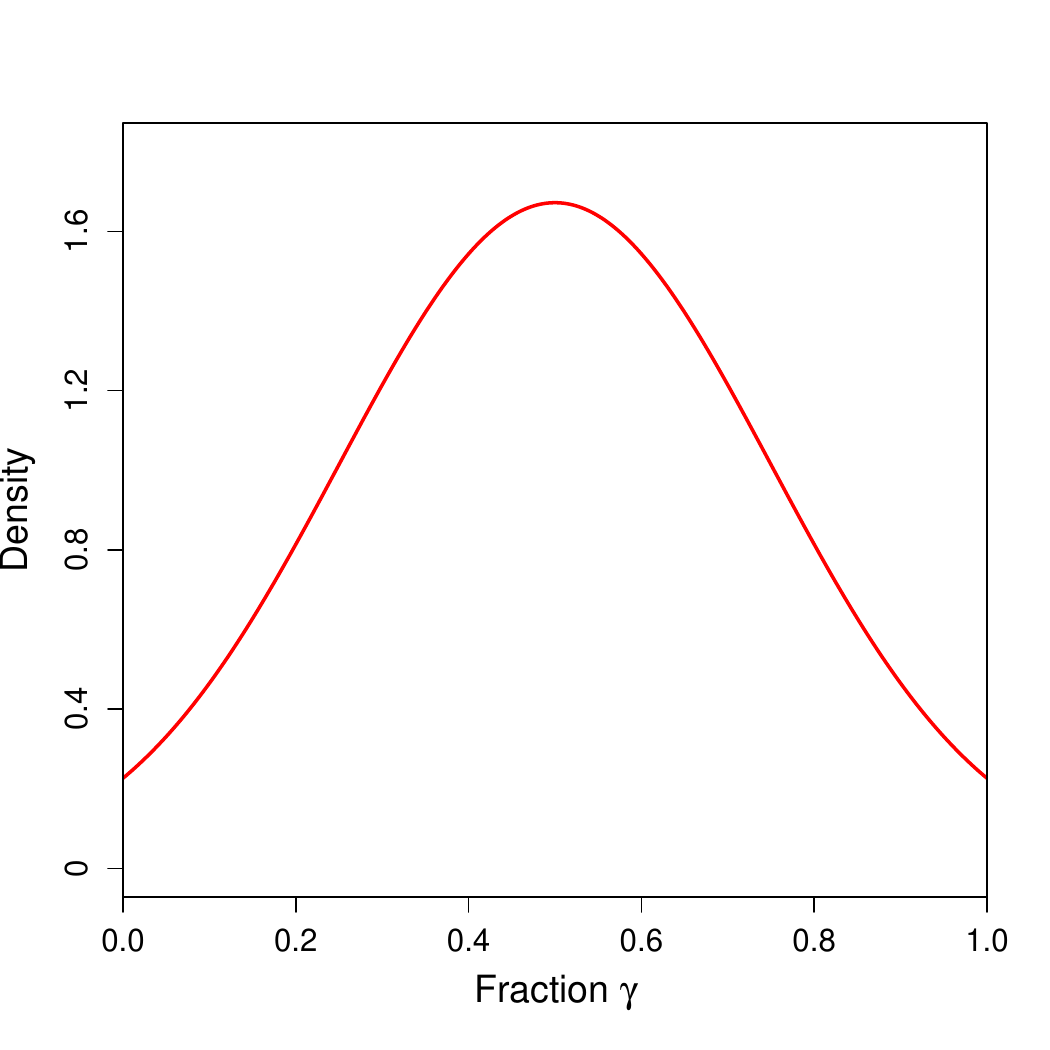}
\includegraphics[scale=0.4]{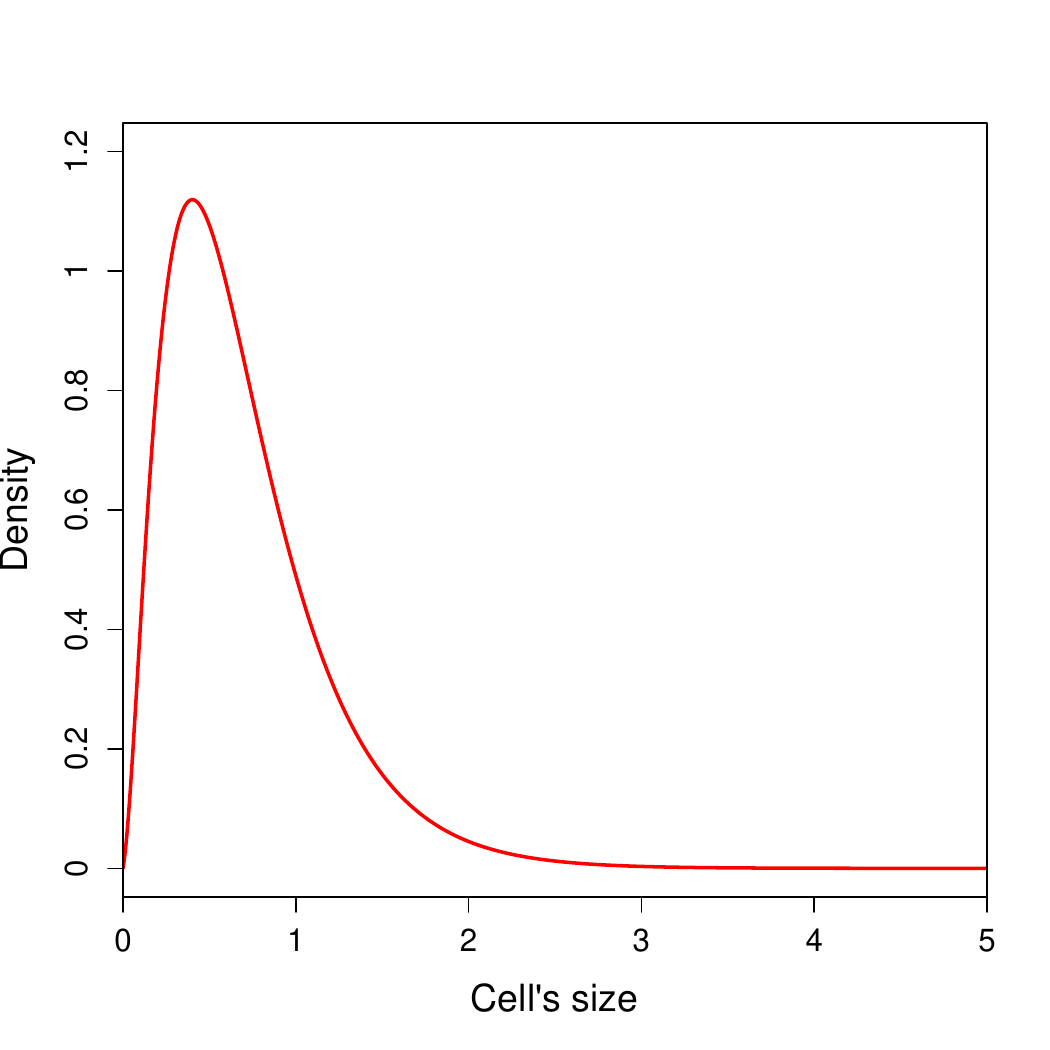}\vspace*{-0.25in}
\caption{\it\footnotesize The truncated normal density $h_2$ (left) and its corresponding stationary density $N_2$ (right). \label{fig:h2N2}}
\end{figure}

For the estimation of $h_1$ and $h_2$,  even if theoretical boundary conditions stated in Assumption~\ref{assump:moment-h} are not satisfied,
we shall observe that the procedure does a good job. Before presenting the numerical results, let us point out some difficulties that affect the quality of the estimation.

First, one can observe in Figures \ref{fig:h1N1} and \ref{fig:h2N2} that shapes of functions $N_1$ and $N_2$ are very similar although functions $h_1$ and $h_2$ are very different. This illustrates a major difficulty of our inverse problem and leads to some difficulties for the estimation of the densities $g$ and $h$.

Secondly, in view of \eqref{eq:FT-g} and \eqref{eq:estigell}, the construction of the estimator $\hat g_{n,\ell}$ is based on the estimation of $M^*$ and $D^*$. Remember that $D^*(\xi) = (-\ic\xi)\kv\big[e^{(\ic\xi - 1)U_1}\big]$ and the leading term $-\ic\xi$ of the last expression, coming from the computation of the Fourier transform of the derivation function $D$, gives large fluctuations for the estimation of $D^*$  when $\xi$ takes large values. To justify this point, we introduce the modified formulas of $D^*$ and $\widehat{D_n^*}$, denoted respectively by $\mathfrak{D}^*$ and $\widehat{\mathfrak{D}_n^*}$, obtained by removing $-\ic\xi$ from the original formulas:
\[
\mathfrak{D}^*(\xi) = \kv\big[e^{(\ic\xi - 1)U_1}\big] \quad\text{ and }\quad \widehat{\mathfrak{D}_n^*}(\xi) = \frac{1}{n}\sum_{j=1}^n e^{(\ic\xi - 1)U_j}.
\]
\bigskip

Figures \ref{fig:h1Mt}, \ref{fig:h1Dt} and \ref{fig:h1Dtv2} provide a reconstruction of $\widehat{M_n^*}$, $\widehat{D_n^*}$ and $\widehat{\mathfrak{D}_n^*}$ based on a random sample $U_1,\ldots, U_n$ of size $n=30,000$ for $h_1$. For each figure,  we represent both the real part and the imaginary part of $\widehat{M_n^*}$ (resp. $\widehat{D_n^*}$, $\widehat{\mathfrak{D}_n^*}$) and we compare them with those of $M^*$ (resp. $D^*$, $\mathfrak{D}^*$). The Fourier transforms $M^*$, $D^*$ and $\mathfrak{D}^*$ are computed directly from the function $N_1$, indicating that one can consider $M^*$, $D^*$ and $\mathfrak{D}^*$ as the ``true'' functions.

Figures \ref{fig:h1Mt} and \ref{fig:h1Dtv2} show that the reconstructions of $M^*$ and $\mathfrak{D}^*$ are very satisfying, whereas large oscillations in the reconstruction of $D^*(\xi)$ appear when $\xi$ is large (see Figure \ref{fig:h1Dt}), due to a large variance term. This confirms what we mentioned: the estimation of the derivative $D^*$ has a strong influence for our statistical problem.

\begin{figure}[ht!]
\centering
\includegraphics[scale=0.4]{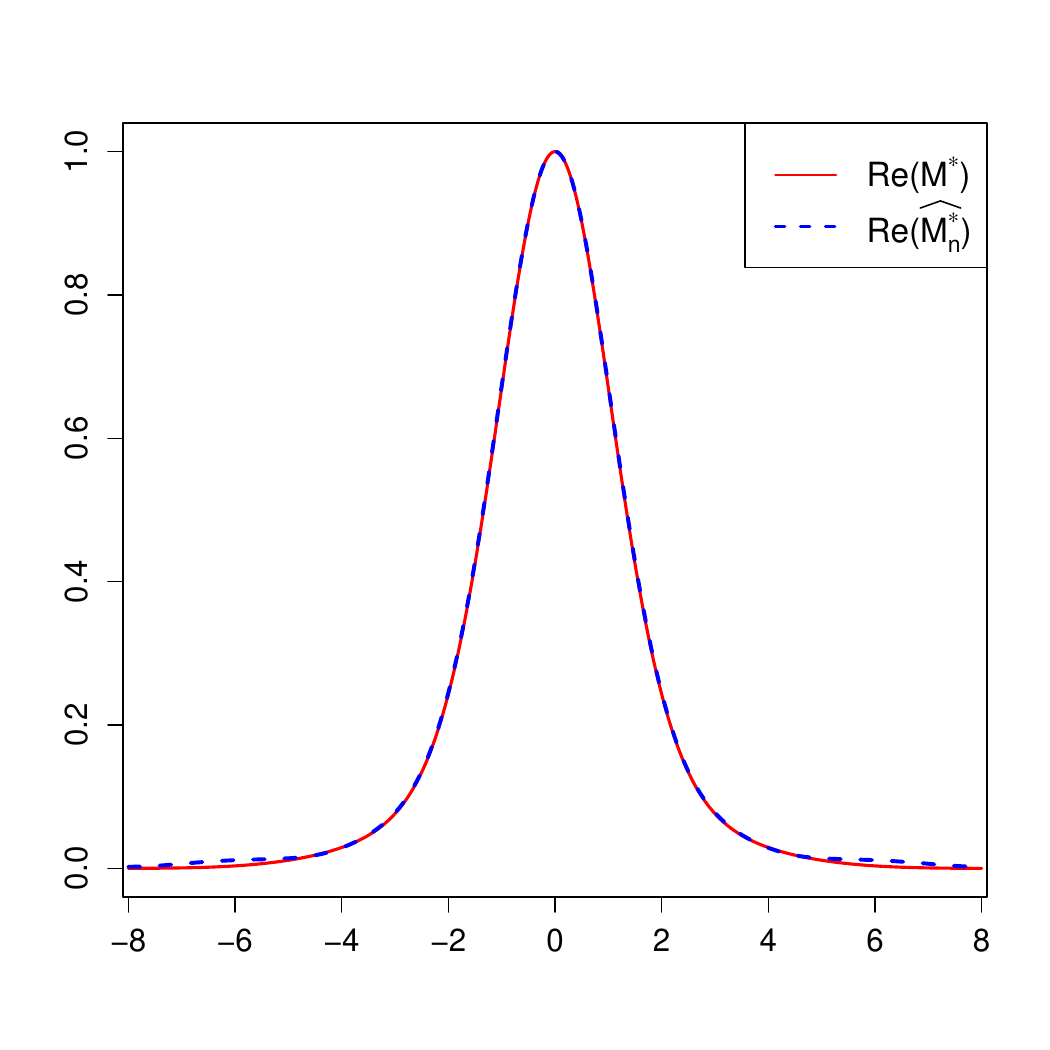}
\includegraphics[scale=0.4]{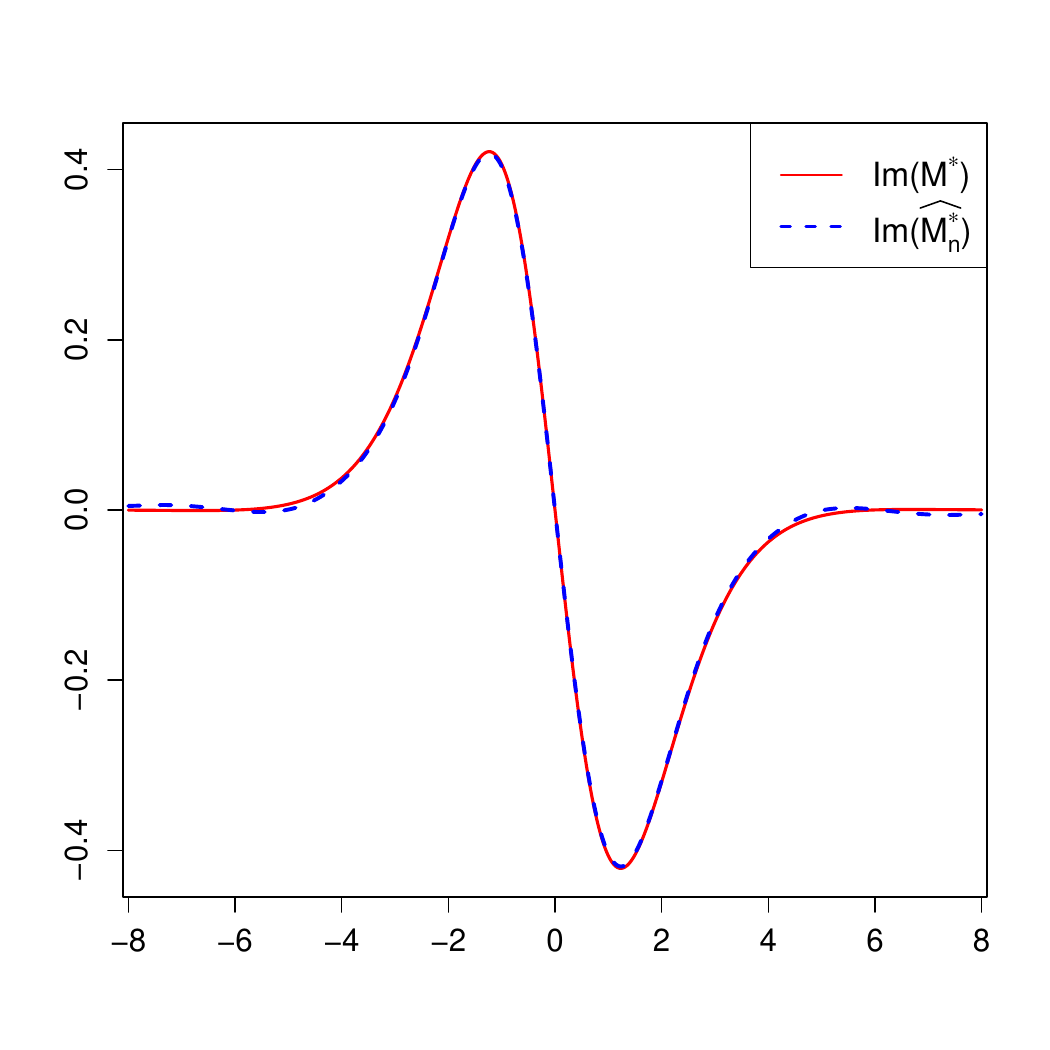}
\caption{\it\footnotesize For the $\Beta(2,2)$ density, the real part (left) and the imaginary part (right) of $\widehat{M_n^*}$ (blue line) compared with those of $M^*$ (red line). \label{fig:h1Mt}}
\end{figure}

\begin{figure}[ht!]
\centering
\includegraphics[scale=0.4]{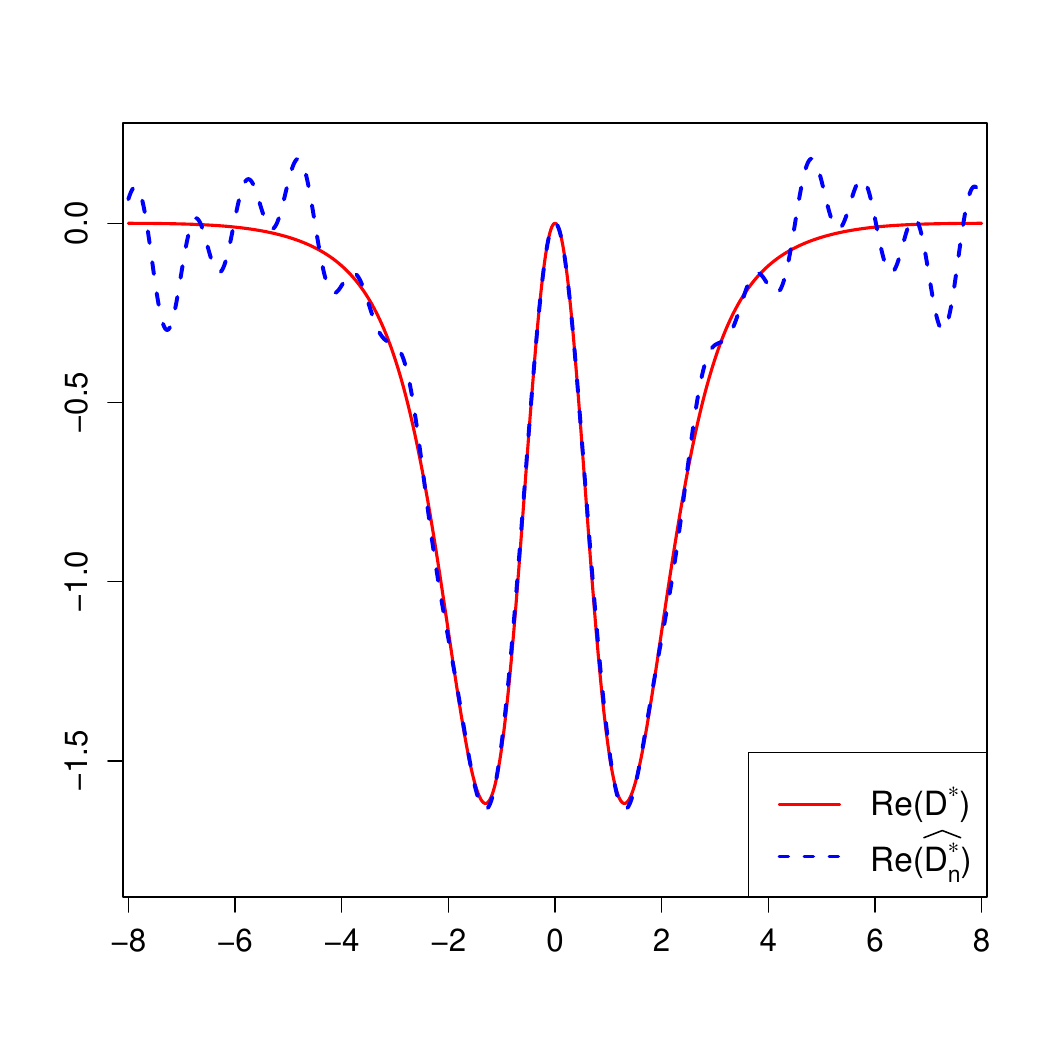}
\includegraphics[scale=0.4]{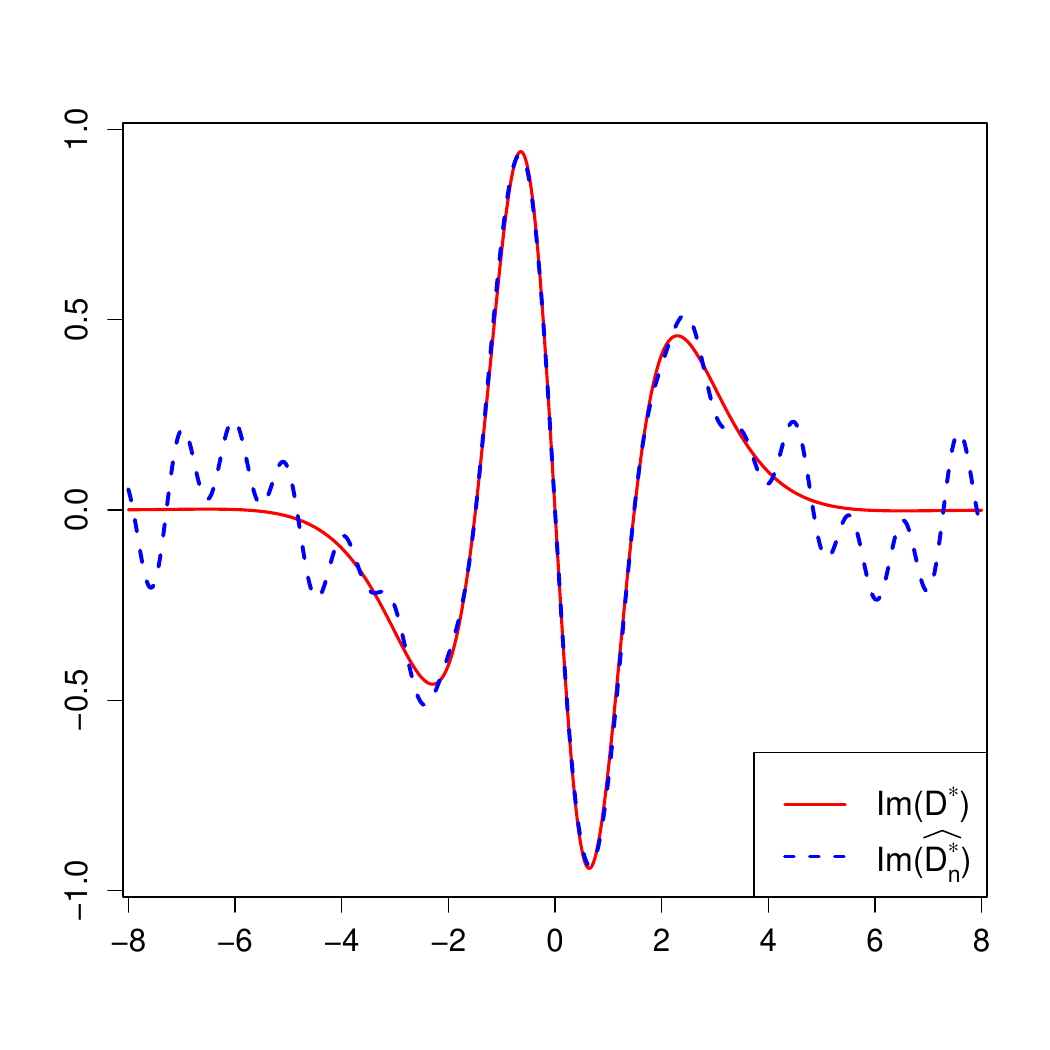}
\caption{\it\footnotesize For the $\Beta(2,2)$ density, the real part (left) and the imaginary part (right) of $\widehat{D_n^*}$ (blue line) compared with those of $D^*$ (red line). \label{fig:h1Dt}}
\end{figure}

\begin{figure}[ht!]
\centering
%\includegraphics[scale=0.35]{h1_real_Mt.pdf}
%\includegraphics[scale=0.35]{h1_imag_Mt.pdf}
%\caption{\it\footnotesize Reconstruction of $\Beta(2,2)$ density: the real part (left) and the image part (right) of $\widehat{M^*}$ (blue line) respectively compared with those of $M^*$ (red line). \label{fig:h1Mt}}
\includegraphics[scale=0.4]{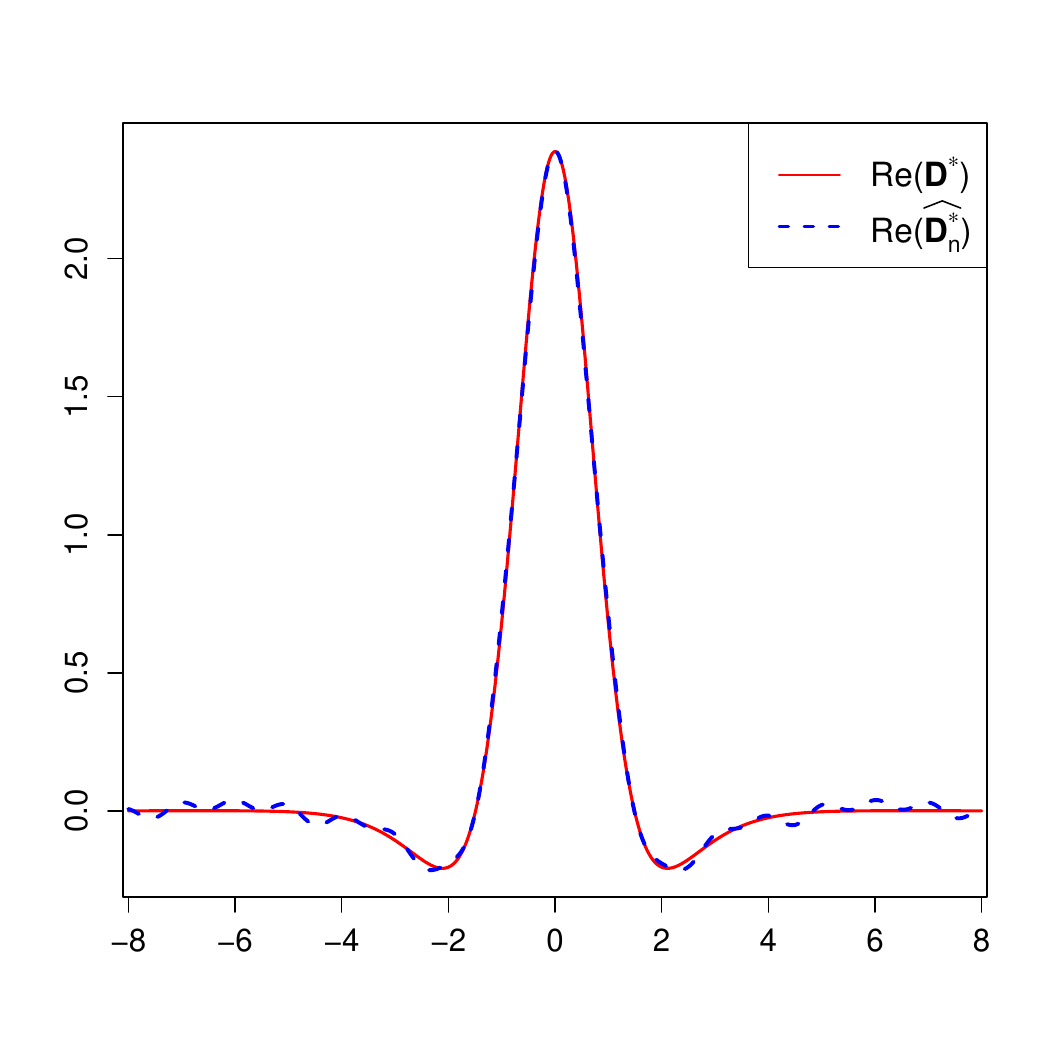}
\includegraphics[scale=0.4]{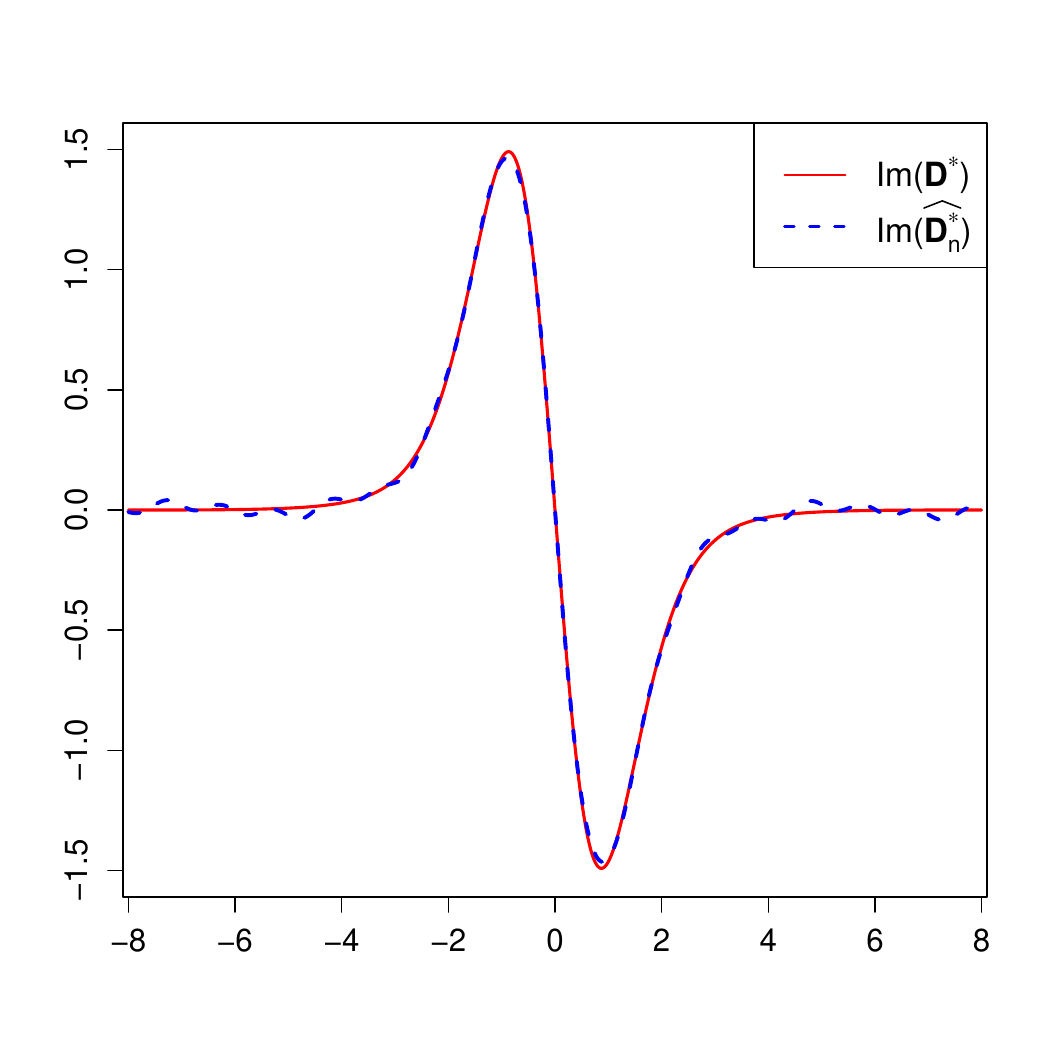}
\caption{\it\footnotesize For the $\Beta(2,2)$ density, the real part (left) and the imaginary part (right) of $\widehat{\mathfrak{D}_n^*}$ (blue line) compared with those of $\mathfrak{D}^*$ (red line). \label{fig:h1Dtv2}}
\end{figure}

In what follows, we introduce our bandwidth selection rules for the estimators $\hat{g}_{n,\bw}$ and $\hat{h}_{n,\bw}$, then we present some numerical results to illustrate the performances of our estimators.
%%%%%%%%%%%%%%%%%%%%%%%%%%%%%%%%%%
\subsection{Bandwidth selection rules}
To establish a bandwidth selection rule for the estimator $\hat{g}_{n,\bw}$ and $\hat{h}_{n,\bw}$, we use resampling techniques inspired from the principle of cross-validation. We first study the $\mathbb{L}^2$-risk of the estimator $\hat{g}_{n,\bw}$ in the Fourier domain:
\begin{align*}
\ch{\hat{g}_{n,\bw} - g}^2 = \frac{1}{2\pi}\ch{\hat{g}^*_{n,\bw} - g^*}^2 = \frac{1}{2\pi}\left(\ch{\hat{g}^*_{n,\bw}}^2 - 2\langle \hat{g}^*_{n,\bw}, g^* \rangle\right) + \frac{1}{2\pi}\ch{g^*}^2.
\end{align*}
Define
\[
J(\bw) := \ch{\hat{g}^*_{n,\bw}}^2 - 2\langle \hat{g}^*_{n,\bw}, g^* \rangle
\]
where the scalar product of two complex functions $u$ and $v$ is defined as
\[
\langle u, v \rangle = \int_\rhoa u(\xi)\overline{v(\xi)}d\xi.
\]
Let $\mathcal{L}$ be a family of possible bandwidths, the optimal bandwidth is given by
\[
\bw_{CV} := \underset{\bw\in \mathcal{L}}{\argmin} J(\bw)=\underset{\bw\in \mathcal{L}}{\argmin} \ch{\hat{g}_{n,\bw} - g}^2.
\]
We aim at constructing an estimator of $J(\bw)$, which is equivalent to providing an estimate of the scalar product $\langle \hat{g}^*_{n,\bw}, g \rangle$ since $\ch{\hat{g}^*_{n,\bw}}^2$ is known. Instead of finding a closed formula for the estimator of the $\mathbb{L}^2$-risk which is intricate in our case, we use the following alternative approach: we start from a random sample and divide it into two disjoint sets, called the training set and the validation set. They are respectively used for computing the estimator and measuring its performance. For sake of simplicity, those sets have the same size.
Let $\hat{g}^{*(t)}_{n,\bw}$ (resp. $\hat{g}^{*(v)}_{n,\bw}$) be the estimator of $g^*$ constructed on the training set (resp. on the validation set). The heuristics is that if $\hat{g}^{*(v)}_{n,\bw'}$ is an estimator constructed on the validation set, then $\langle \hat{g}^{*(t)}_{n,\bw}, \hat{g}^{*(v)}_{n,\bw'} \rangle$ gives us an estimate of $\langle \hat{g}^{*(t)}_{n,\bw}, g^* \rangle$ and subsequently  an estimate of $J(\bw)$. The final bandwidth is the one which minimizes the average of all risk estimates computed over a number of couples of training-validation set selected from the same sample.

In detail, let $\{X_1,\ldots, X_n\}$ be a random sample. Let $E$ and $E^C$ be the subsets of $\{1,\ldots,n\}$ such that $|E|=n/2$ and $E^C = \{1,\ldots,n\}\setminus E$. We divide $\{X_1,\ldots, X_n\}$ into two sub-samples:
\[
\mathbf{X}^{E} := (X_i)_{i\in E}\quad\text{ and }\quad \mathbf{X}^{E^C} := (X_i)_{i\in E^C}.
\]
There are $V_{\max}$ possibilities to select the subsets $(E, E^c)$, where
$$V_{\max} := \binom{n}{n/2}.$$ If $n$ is large then $V_{\max}$ will be huge. Hence we choose in practice a number $V$ which is smaller than $V_{\max}$ to reduce computation time.
%$$V_{\max} := \left(\begin{array}{c}n \\n/2\end{array}\right).$$
We propose two criteria  for the selection of bandwidths as follows.
\begin{definition}\label{def:selection-rule-1}
Let $(E_j, E_j^C)_{1\le j\le V}$, $V\le V_{\max}$ be the sequence of subsets selected from $\{1,\ldots,n\}$ and the corresponding sub-samples $(\mathbf{X}^{E_j}, \mathbf{X}^{E^C_j})_{1\le j\le V}$. Let
${\hat{g}_{n,\bw}}^{*(E_j)}$ and ${\hat{g}_{n,\bw}}^{*(E_j^C)}$ be the estimators of $g^*_\bw$ respectively constructed on the sub-samples $\mathbf{X}^{E_j}$ and $\mathbf{X}^{E^C_j}$. Define
\begin{equation}\label{CV-risk-crit01}
\widehat{J}_{Crit1}(\bw) := \frac{1}{V}\sum_{j=1}^V \left[\ch{{\hat{g}_{n,\bw}}^{*(E_j)}}^2 - 2\big\langle {\hat{g}_{n,\bw}}^{*(E_j)}, {\hat{g}_{n,\bw}}^{*(E^C_j)}\big\rangle \right].
\end{equation}
Then the selected bandwidth is given by
\begin{equation}\label{CV-crit01}
\hat{\bw}_{Crit1} := \underset{\bw \in\mathcal{L} }{\argmin} \widehat{J}_{Crit1}(\bw).
\end{equation}
\end{definition}

\begin{definition}\label{def:selection-rule-2}
Let ${\hat{g}_{n,\bw}}^{*(E_j)}$ and ${\hat{g}_{n,\bw'}}^{*(E_j^c)}$ be the estimators of $g^*_{n,\bw}$ as in Definition \ref{def:selection-rule-1}. Define,
\begin{equation}\label{CV-risk-crit02}
\widehat{J}_{Crit2}(\bw,\bw') := \frac{1}{V}\sum_{j=1}^V \left[\ch{{\hat{g}_{n,\bw}}^{*(E_j)}}^2 - 2\big\langle {\hat{g}_{n,\bw}}^{*(E_j)}, {\hat{g}_{n,\bw'}}^{*(E^C_j)}\big\rangle \right].
\end{equation}
Then an alternative bandwidth selection rule is given as follows:
\begin{equation}\label{CV-crit02}
\hat{\bw}_{Crit2} := \underset{\bw \in\mathcal{L} }{\argmin}\Big\{\underset{\bw'\in\mathcal{L}}{\min}\, \widehat{J}_{Crit2}(\bw,\bw')\Big\}.
\end{equation}
\end{definition}
Note that the second criterion is more computationally intensive.
%%%%%%%%%%%%%%%%%%%%%%
\subsection{Numerical results}\label{sec:num-CV}
Remember that we aim at reconstructing the densities $h_1$ and $h_2$, \ie the $\Beta(2,2)$ density and the density of a truncated normal $\mathcal{N}(0.5,0.25^2)$ on $[0,1]$. We apply formulas \eqref{eq:estigell}, \eqref{eq:estimator-g} and \eqref{eq:estimator-h} to construct the estimators for these densities. The bandwidth $\ell$ is chosen in the family $\mathcal{L} \subset \big\{1/(0.5\Delta) , \ \Delta =1,\ldots, 50\big\}$ according to two bandwidth selection rules. We compare the estimated densities when using our selection rules with those estimated with the oracle bandwidth. The oracle bandwidth is the optimal bandwidth obtained by assuming that we know the true density and defined as follows:
\[
\ell_{\text{oracle}}:= \underset{\ell\in \mathcal{L}}{\argmin} \ch{\hat g_{n,\ell}- g}^2.
\]
Of course, $\ell_{\text{oracle}}$ and $\hat{g}_{n,\ell_{\text{oracle}}}$ cannot be used in practice (since they depend on the true function to estimate) but they can be viewed as benchmark quantities.
For $n=30, 000$ observations, we illustrate in Figures \ref{fig:estimate-g1h1} and \ref{fig:estimate-g2h2} the estimates of $( g_1, h_1)$ and $(g_2, h_2)$ using the first bandwidth selection rule (see Definition \ref{def:selection-rule-1}).

These graphs show  bad behaviors when reconstructing $h_1$ and $h_2$ if we do not take into account the symmetry of theses densities.
Considering symmetrization (see \eqref{eq:symm}) provides significant improvements (see  Figure \ref{fig:htilde12}). Reconstructions of densities are quite satisfying except at boundaries of $[0,1]$, which is expected in view of remarks of Section~\ref{sec:Relations}.

\begin{figure}[!ht]
\centering
\includegraphics[scale=0.38]{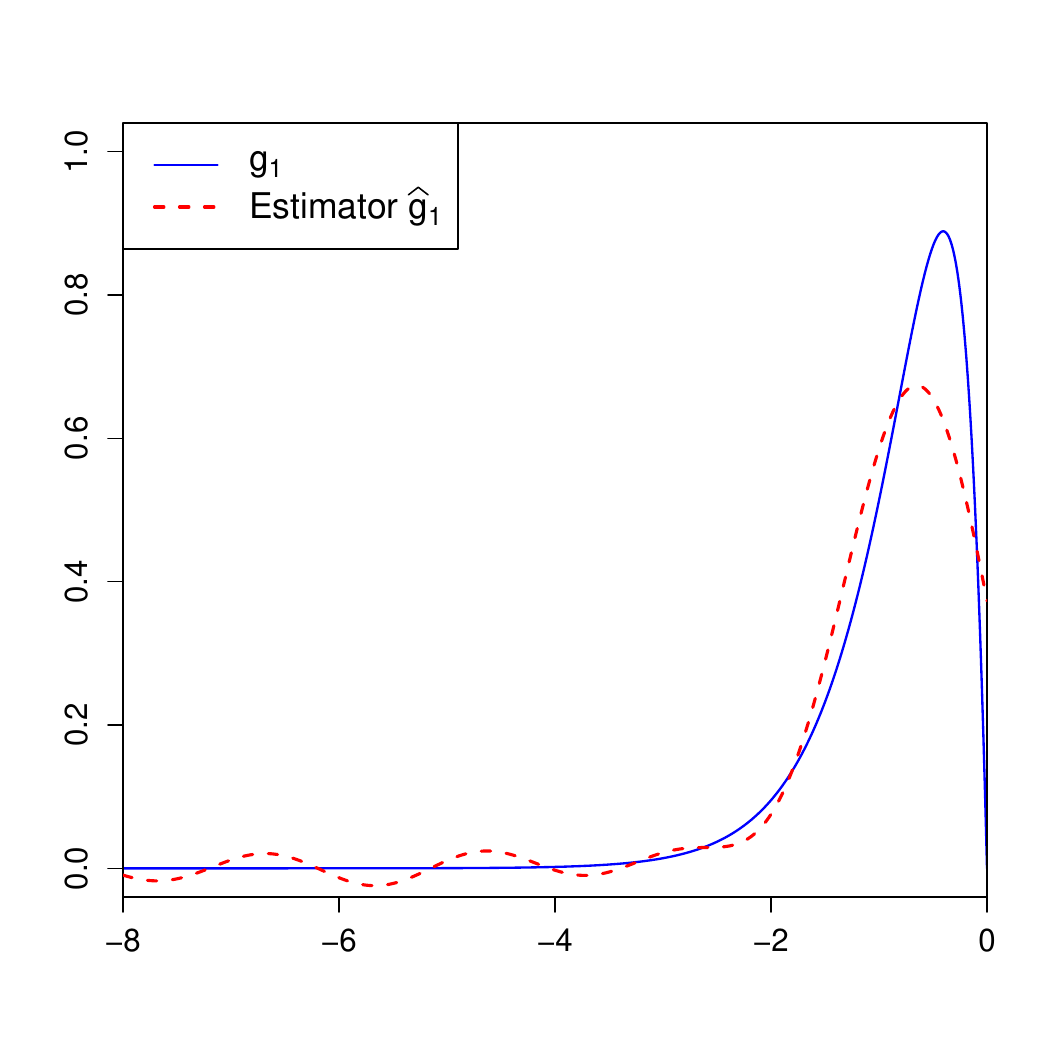}
\includegraphics[scale=0.38]{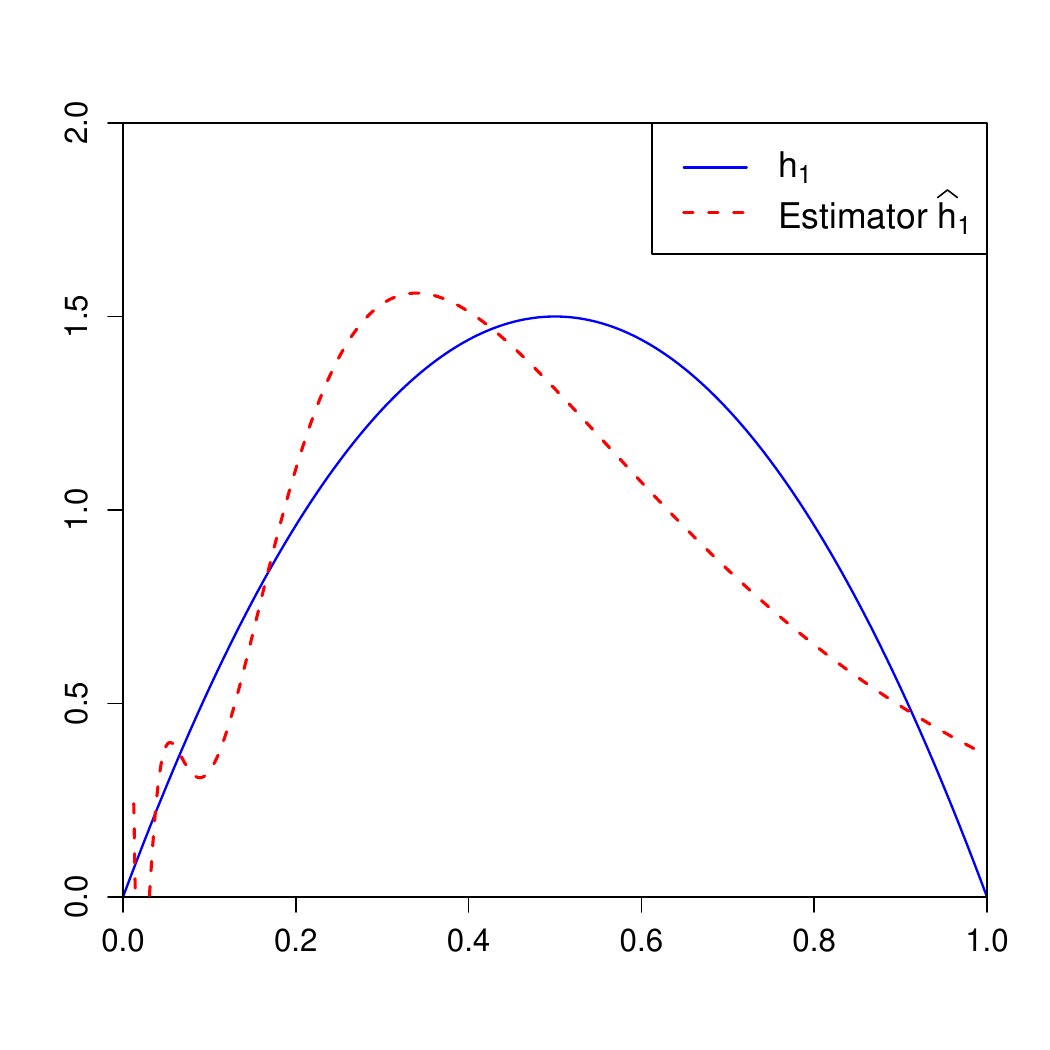}\vspace*{-0.25in}
\caption{\it\footnotesize Estimation of $g_1(x) = e^xh_1(e^x)$ (left) and $h_1$ (right). \label{fig:estimate-g1h1}}
\end{figure}

\begin{figure}[!ht]
\centering
\includegraphics[scale=0.38]{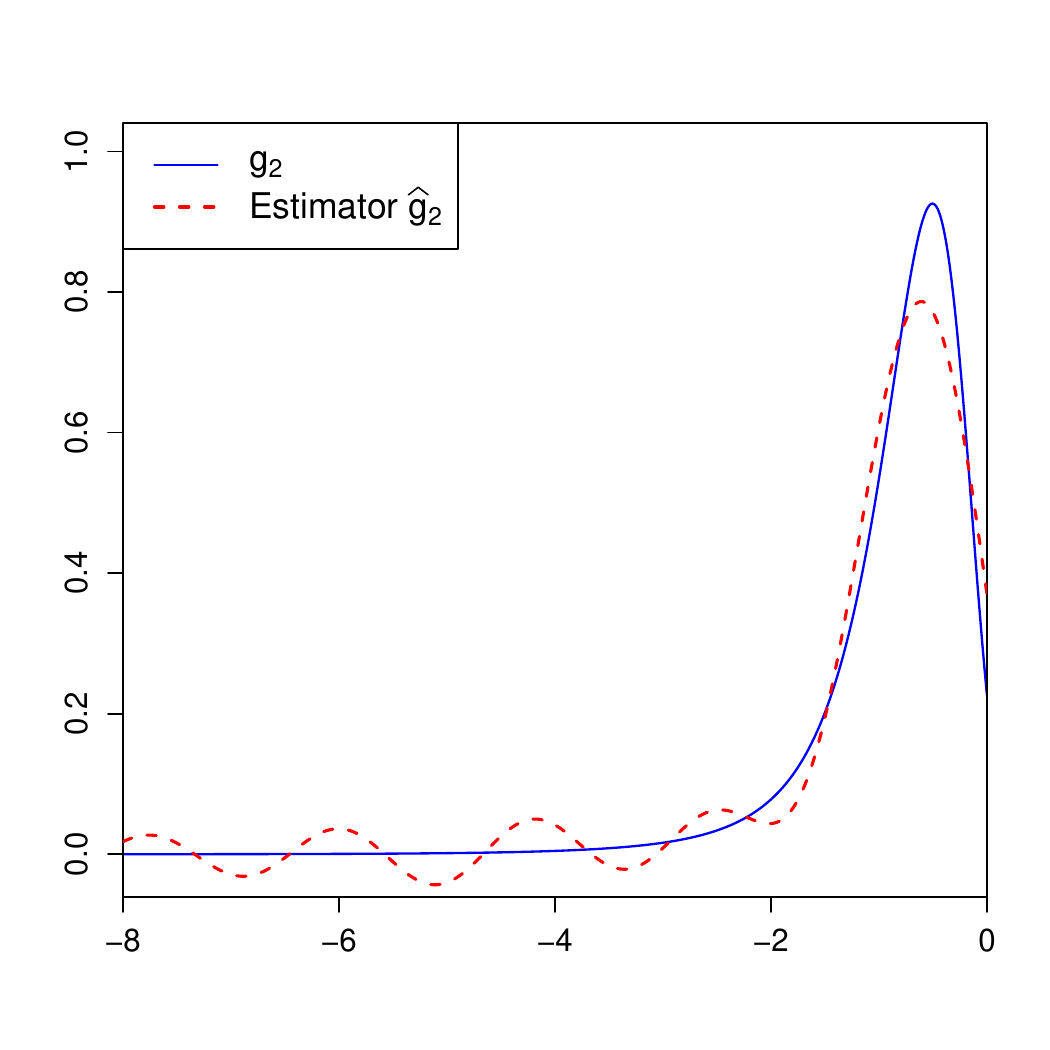}
\includegraphics[scale=0.38]{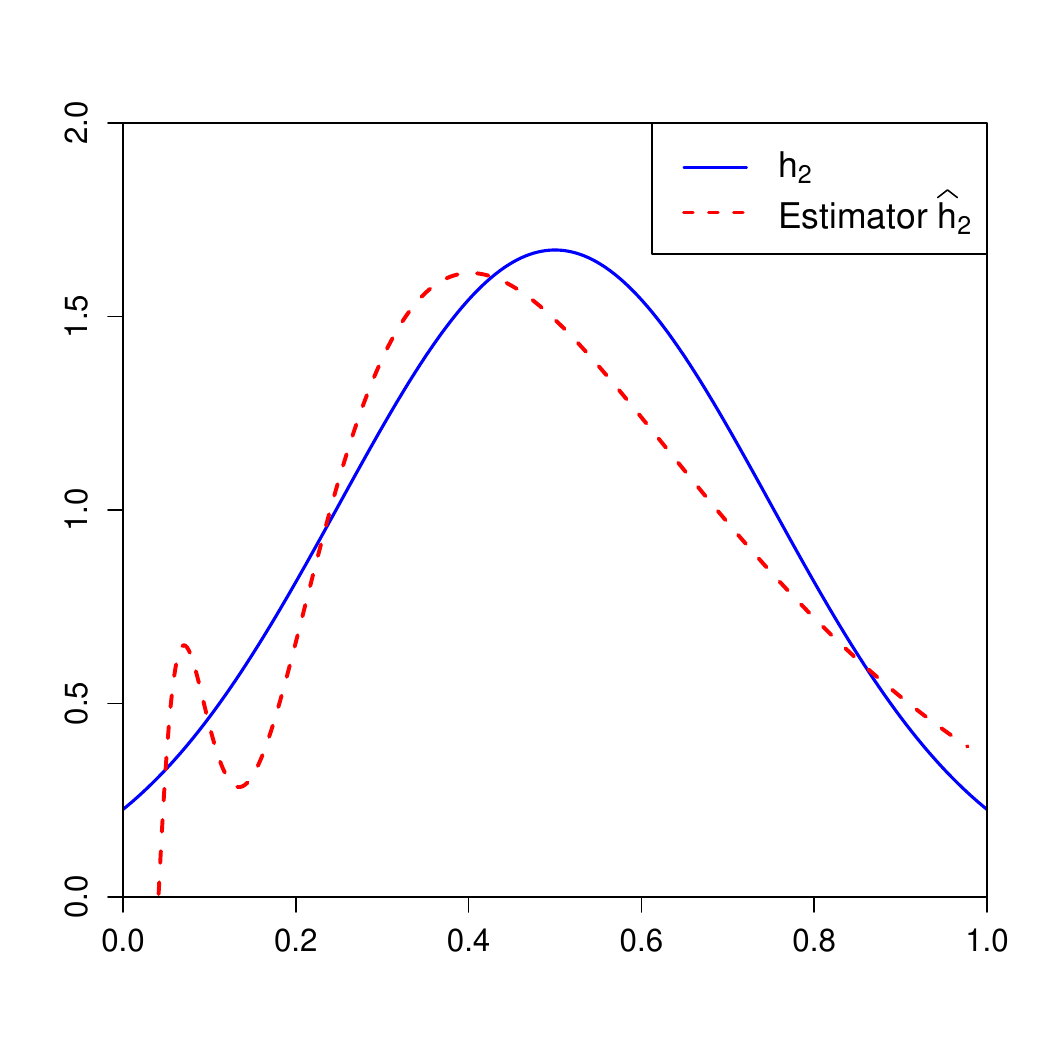}\vspace*{-0.25in}
\caption{\it\footnotesize Estimation of $g_2(x) = e^xh_2(e^x)$ (left) and $h_2$ (right). \label{fig:estimate-g2h2}}
\end{figure}

\begin{figure}[!ht]
\centering
\includegraphics[scale=0.38]{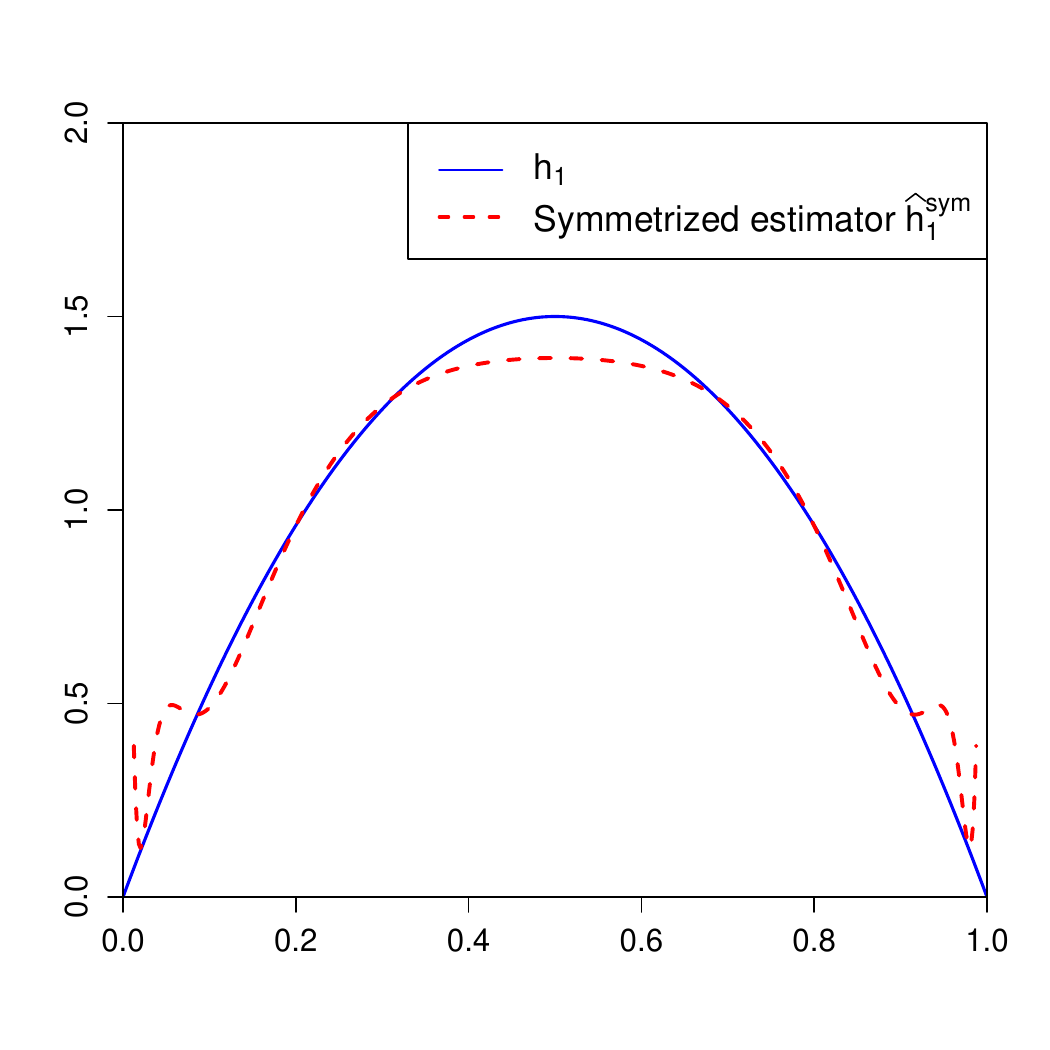}
\includegraphics[scale=0.38]{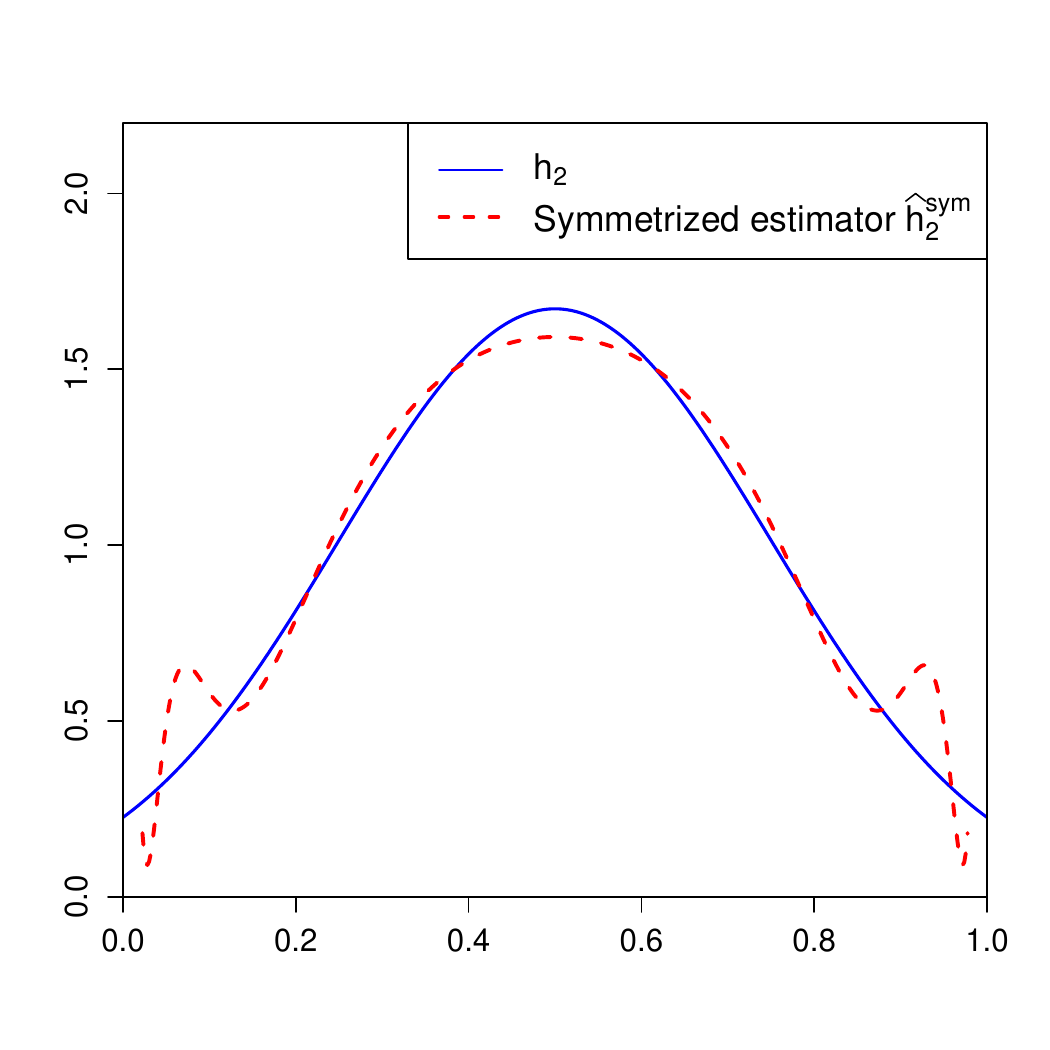}
\caption{Reconstructions of $h_1$  (left) and $h_2$ (right) after symmetrization.}\label{fig:htilde12}
\end{figure}
\newpage
Table \ref{tab1} shows the $\mathbb{L}^2$-risk of $\hat{g}_{n,\hat{\ell}_{Crit1}}$ and  $\hat{g}_{n,\hat{\ell}_{Crit2}}$ where $\hat{\ell}_{Crit1}$ and $\hat{\ell}_{Crit2}$ are the bandwidths selected by our selection rules (see Definitions \ref{def:selection-rule-1} and \ref{def:selection-rule-2}), over 100 Monte Carlo runs for estimating  $h_1$ and $h_2$ with respect to $V = 10, 25$ and $40$. The sample size for each repetition is $n = 30, 000$. We also provide associated Boxplots in Figure \ref{fig:box1} and \ref{fig:box2}.

\begin{table}[ht]
\centering
\begin{tabular}{|ll|lll|lll|}
\hline
			&					&  \multicolumn{3}{|c|}{$h_1$ - \text{$\Beta(2,2)$}} & \multicolumn{3}{|c|}{\text{$h_2$ - Truncated normal}}\\
\cline{3-8}										
			&					&	Crit1	&	Crit2	&	Oracle	&	Crit1	&	Crit2	&	Oracle		\\
\hline											
$V = 10$		&	$\bar{e}$			&0.04155 &0.04031 &0.03056	&0.03703 &0.03669 &0.02806	\\
			&	$\bar{\hat\ell}$		&0.29839 &0.29606 &0.27583	&0.30255 &0.30312 &0.27858	\\

\hline											
$V = 25$		&	$\bar{e}$			&0.04145 &0.03898 &0.03056		 	&0.03679 &0.03602 &0.02806				\\
			&	$\bar{\hat\ell}$		&0.29732 &0.29787 &0.27583			&0.30348 &0.30155 &0.27858				\\
\hline											
$V = 40$		&	$\bar{e}$			&0.04039 &0.03708 &0.03056			&0.03613 &0.03440 &0.02806				\\
			&	$\bar{\hat\ell}$		&0.29837 &0.29985 &0.27583			&0.30396 &0.30303 &0.27858				\\
\hline	
\end{tabular}
\caption{\footnotesize\itshape Average of the $\mathbb{L}^2$-risk of $\hat{g}_{n,\hat{\ell}_{Crit1}}$ and  $\hat{g}_{n,\hat{\ell}_{Crit2}}$ over $100$ Monte Carlo repetitions for estimating $h_1$ and $h_2$, compared with those of the oracle. \label{tab1}}
\end{table}

\begin{figure}[!ht]
\def\widthboxplot1{4.75cm}
\centering
\begin{tabular}{cccc}
& $V = 10$ 	& $V = 25$ 	&$V = 40$ \\
\rotatebox[origin = c]{90}{Bandwidths}
& \parinc{\widthboxplot1}{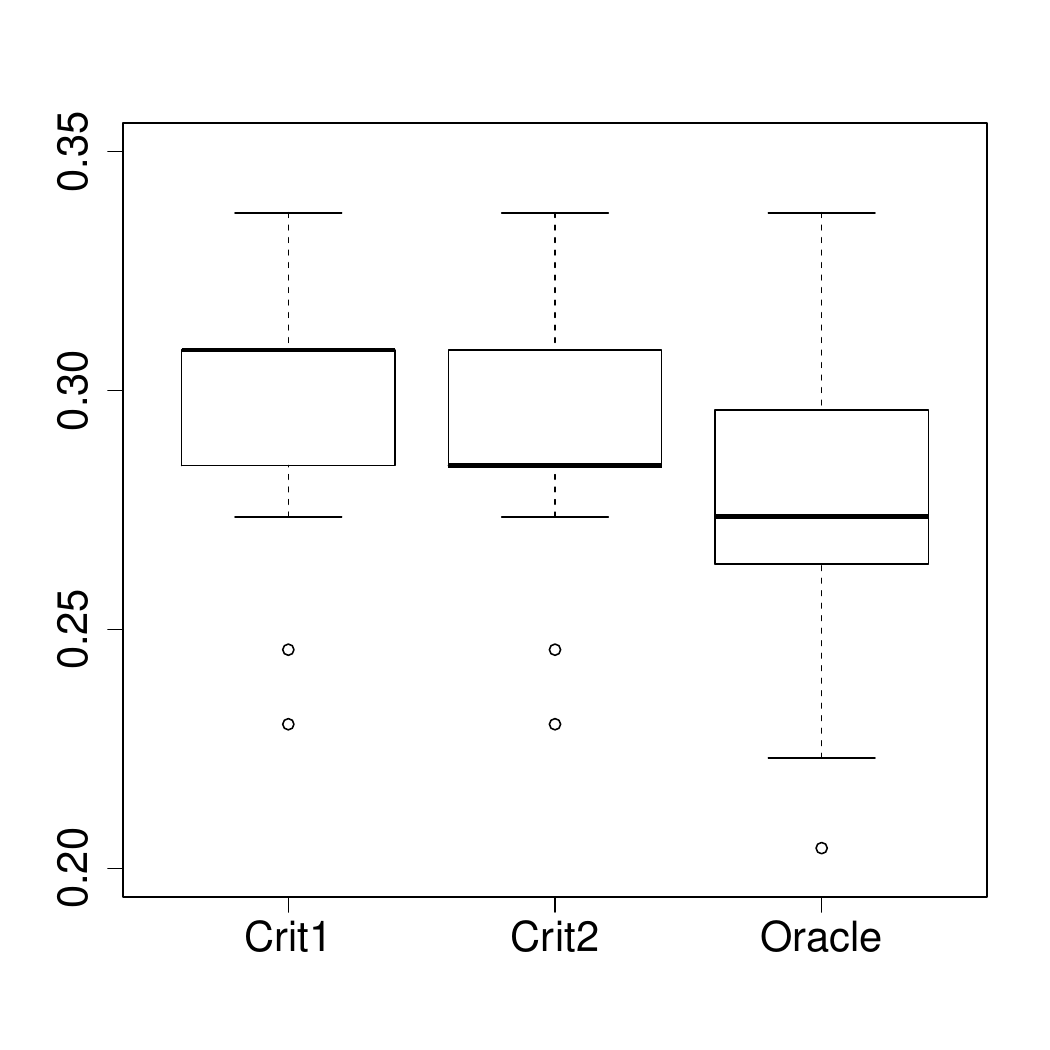}
& \parinc{\widthboxplot1}{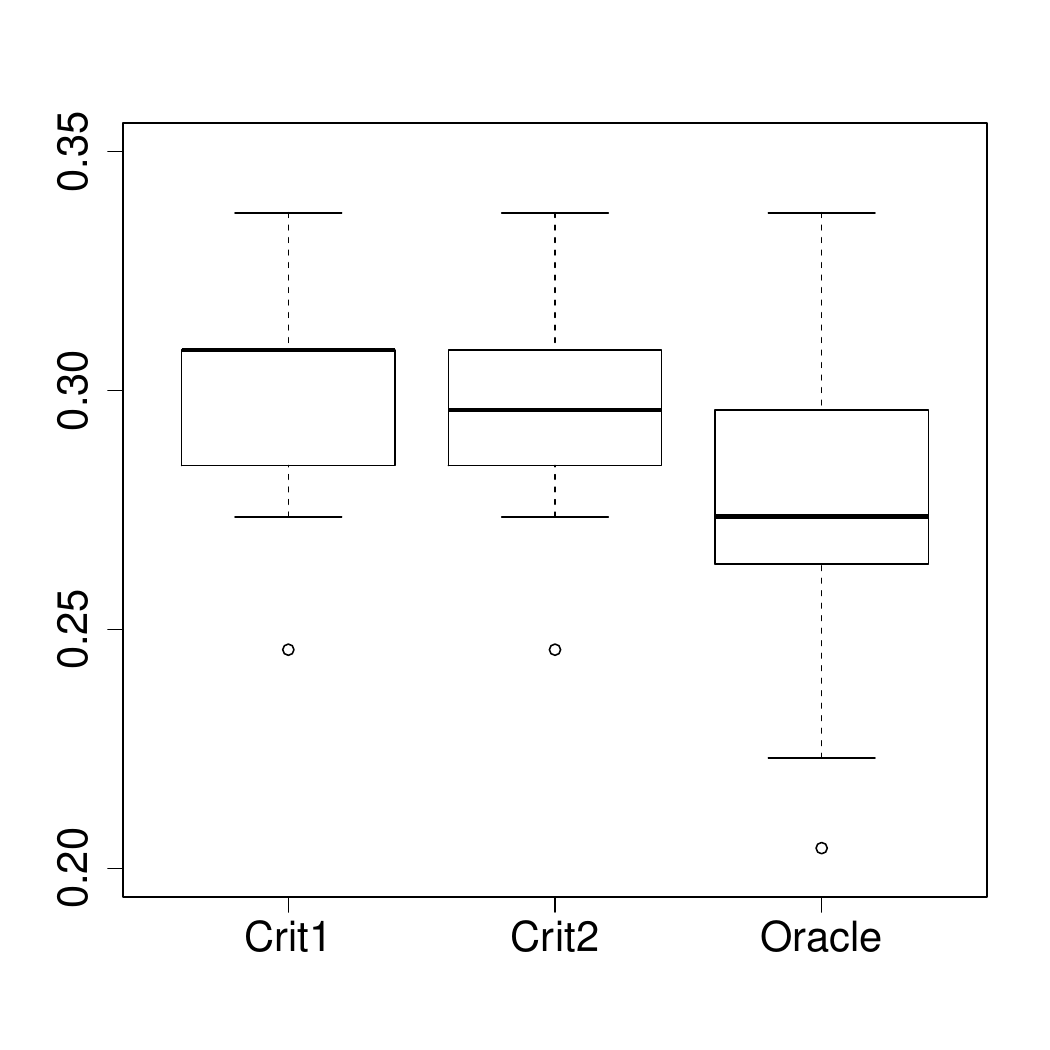}
& \parinc{\widthboxplot1}{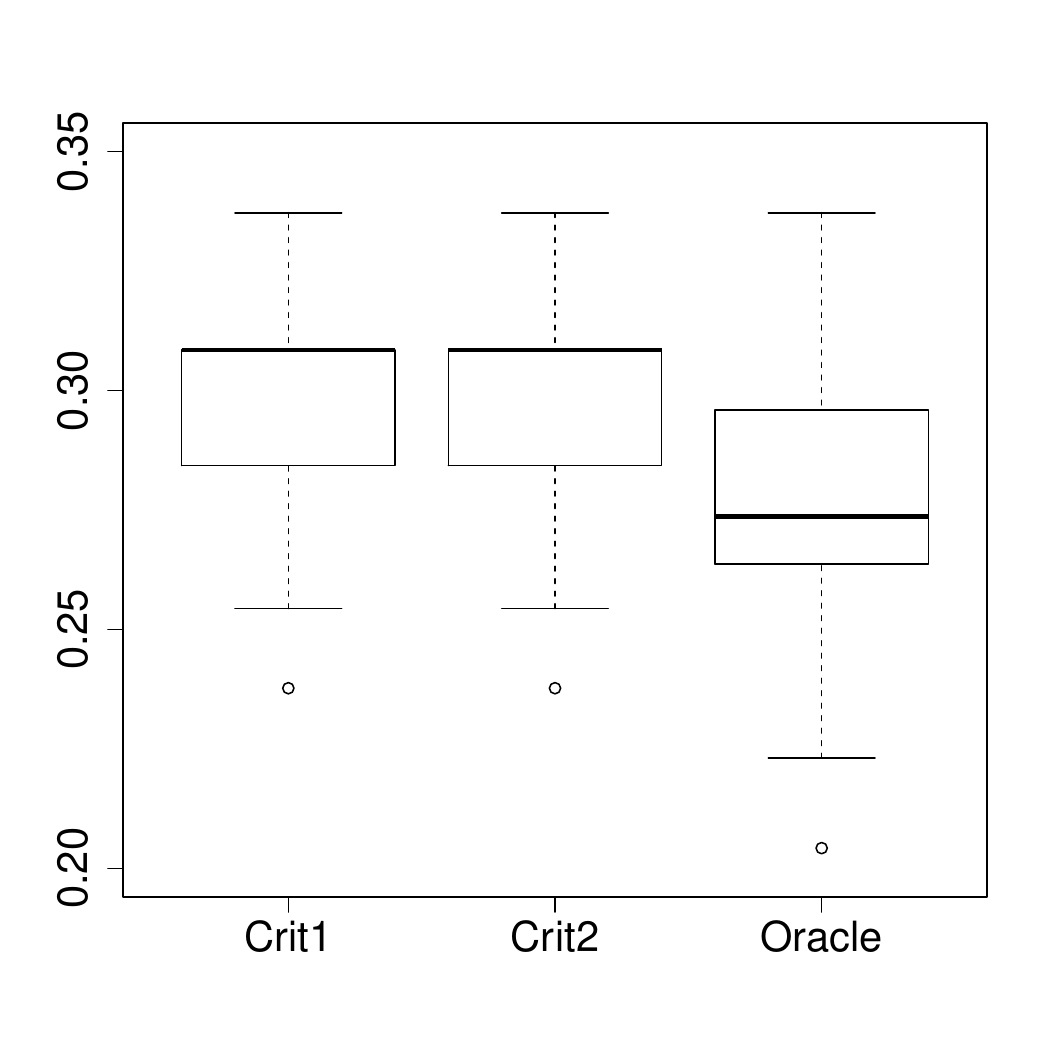} \\
\rotatebox[origin = c]{90}{Errors}
& \parinc{\widthboxplot1}{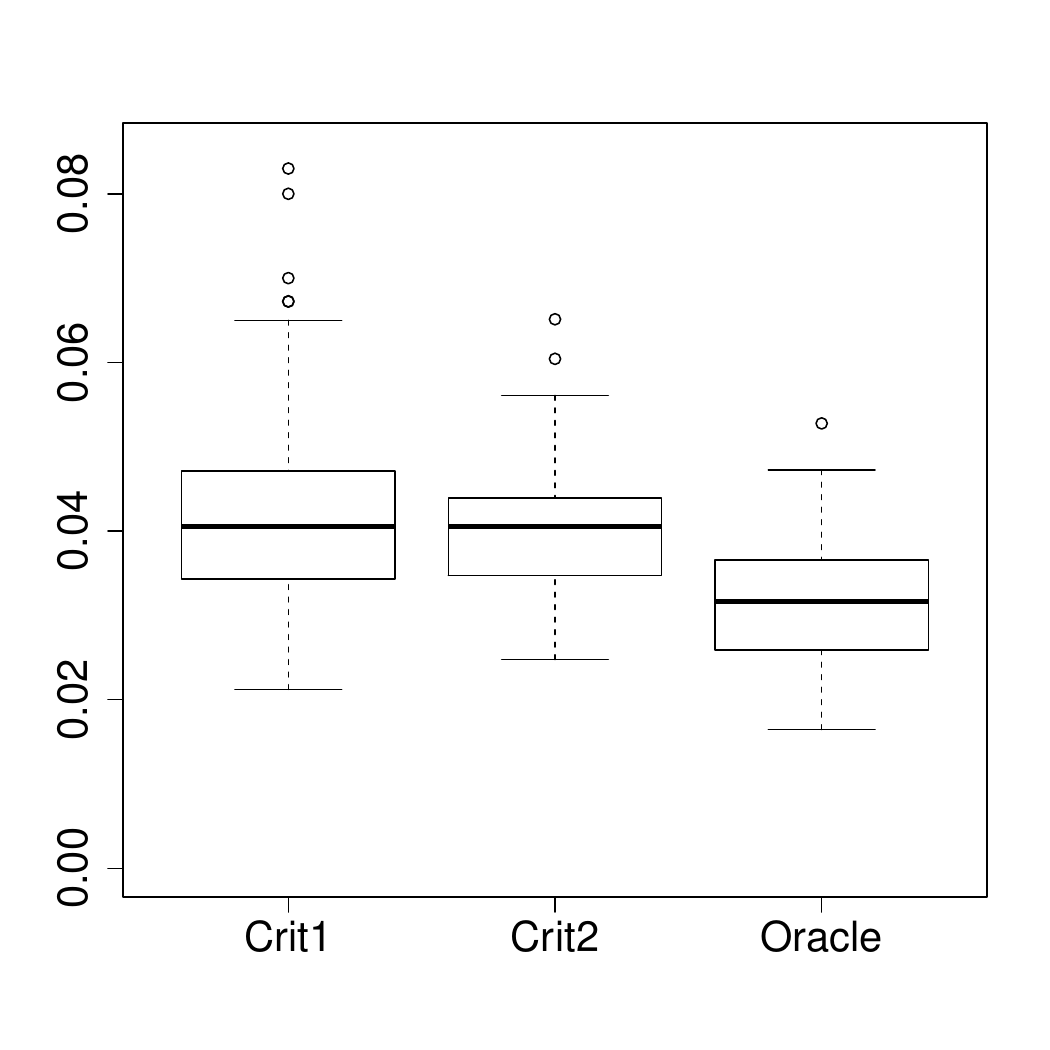}
& \parinc{\widthboxplot1}{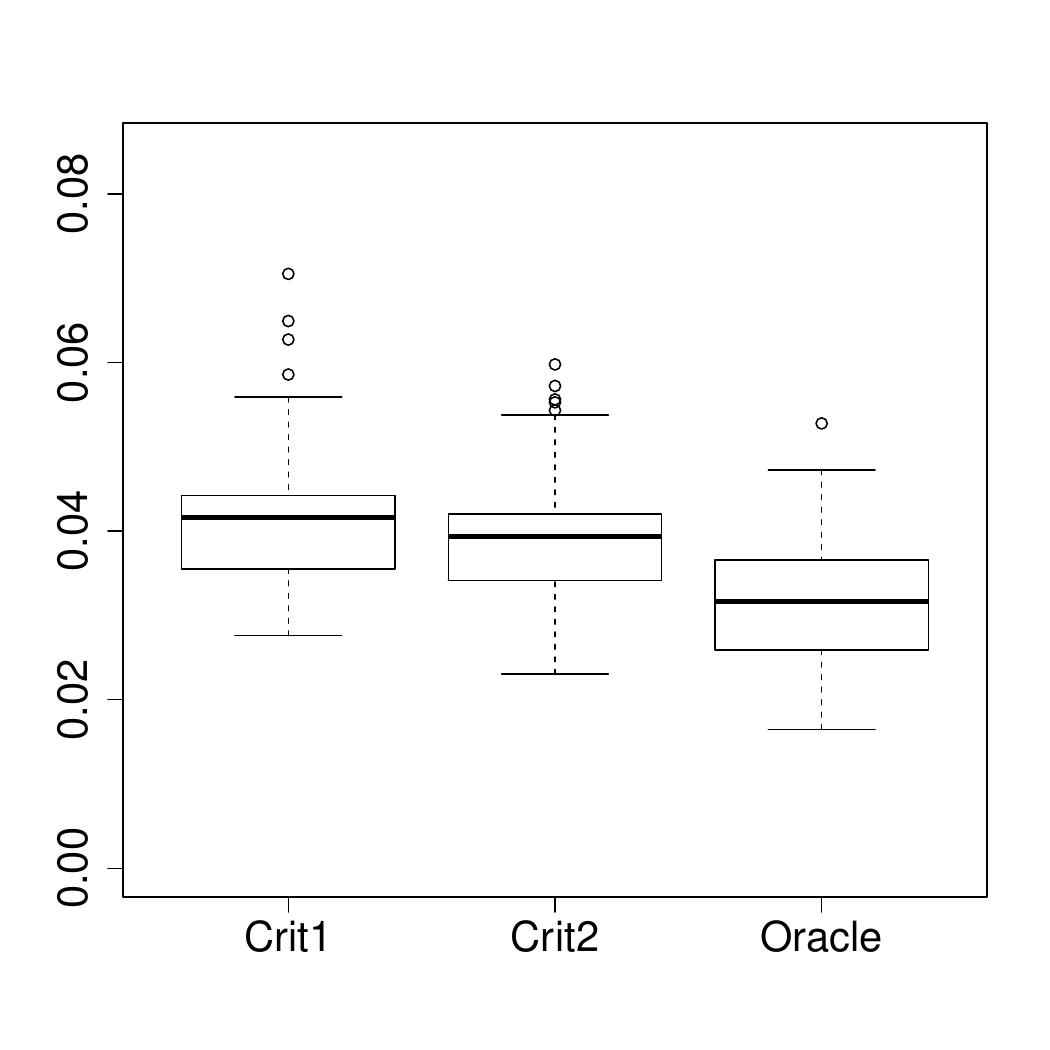}
& \parinc{\widthboxplot1}{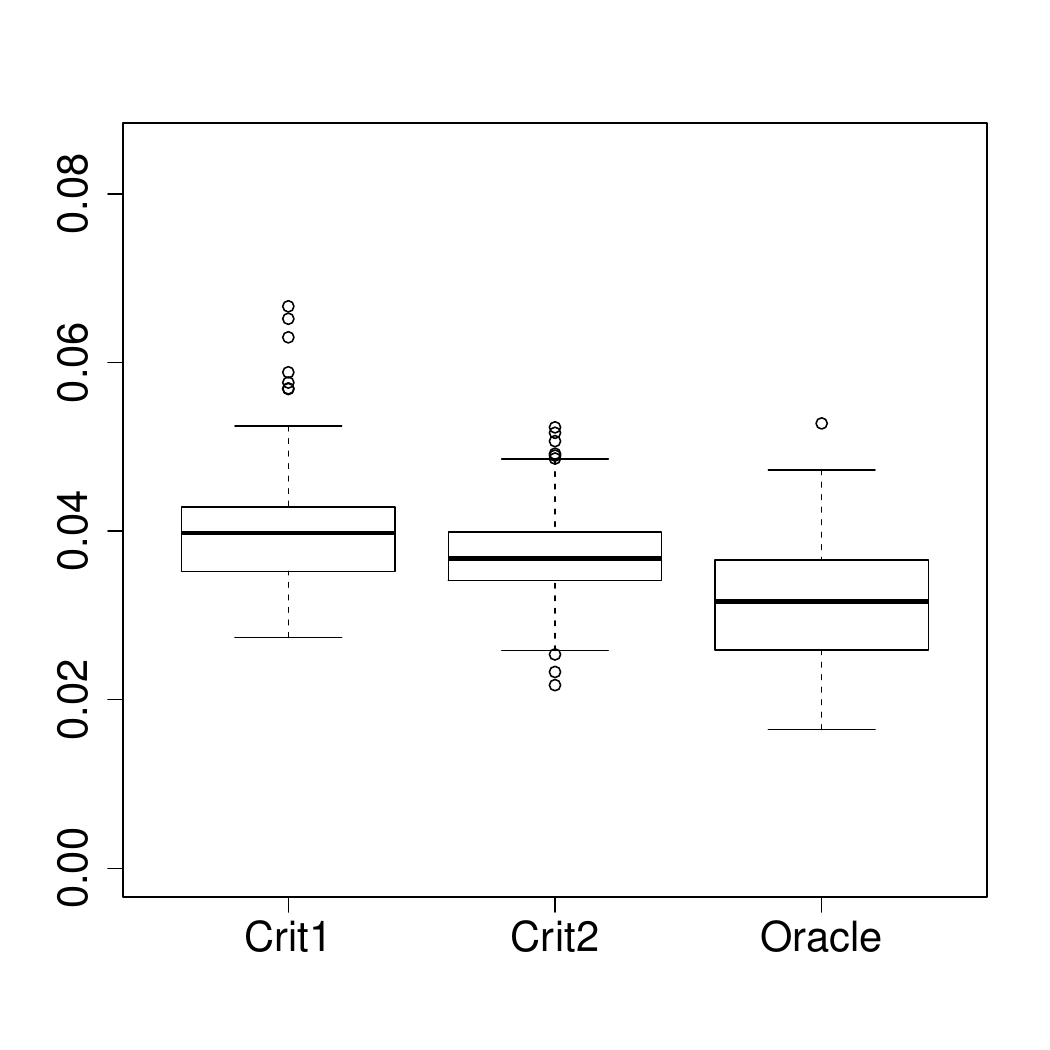} \\
\end{tabular}
\caption{\footnotesize\it Bandwidths and errors for the estimation of $h_1$ ($\Beta(2,2)$ distribution). \label{fig:box1}}
\end{figure}

Table \ref{tab1} and boxplots show that the performances of our estimators are close to those of the oracle. When comparing the first bandwidth selection rule Crit1  with the second one Crit2, one can observe that the performances of Crit2 are slightly better than those of Crit1 (see Table~\ref{tab1}). However, Crit2 is more time-consuming than Crit1. For both selection rules, we observe that the performances are slightly better when we increase the number of selected sub-samples $V$.  Remember that the larger the value of $V$, the larger the computation time whereas the performances are improved marginally. Hence, in practice it is reasonable to choose the first bandwidth selection rule Crit1 with $V=10$. {Finally, for both estimation of $g_1$ and $g_2$ according to the $Beta(2,2)$ distribution and the truncated normal distribution respectively, we illustrate in Figure \ref{fig:regline-error} the regression lines of the logarithm of the mean squared error of $\hat{g}_{n,\hat{\ell}_{Crit1}}$ versus the logarithm of the sample size, with $n\in \{1,000; \ 2,000; \ 5,000;
 \ 10,000; \ 20,000; \ 30,000\}$.} One can observe that the MSE's decrease as the sample size $n$ increases. This justifies the convergence of our estimators from the practical point of view.\par

\begin{figure}[!ht]
\def\widthboxplot1{4.75cm}
\centering
\begin{tabular}{cccc}
& $V = 10$ 	& $V = 25$ 	&$V = 40$ \\
\rotatebox[origin = c]{90}{Bandwidths}
& \parinc{\widthboxplot1}{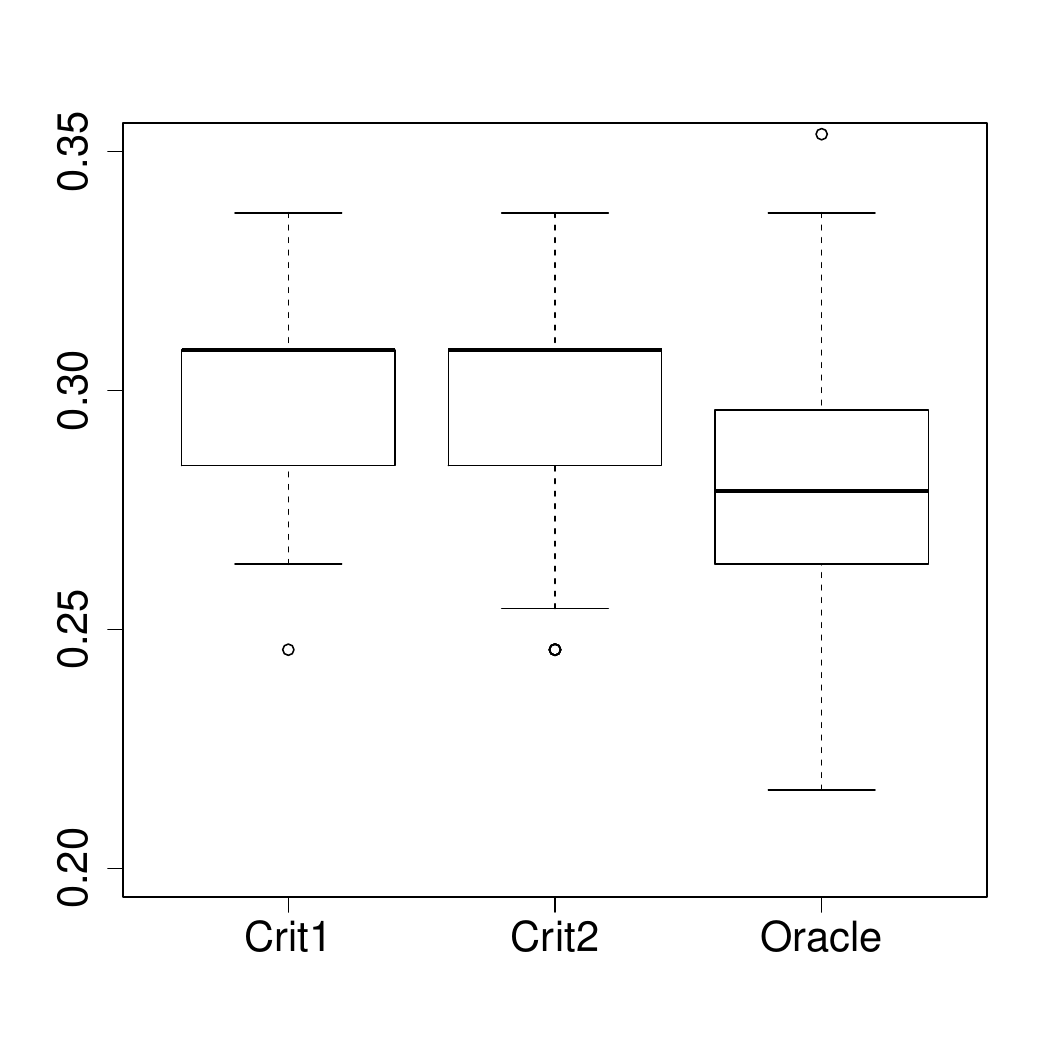}
& \parinc{\widthboxplot1}{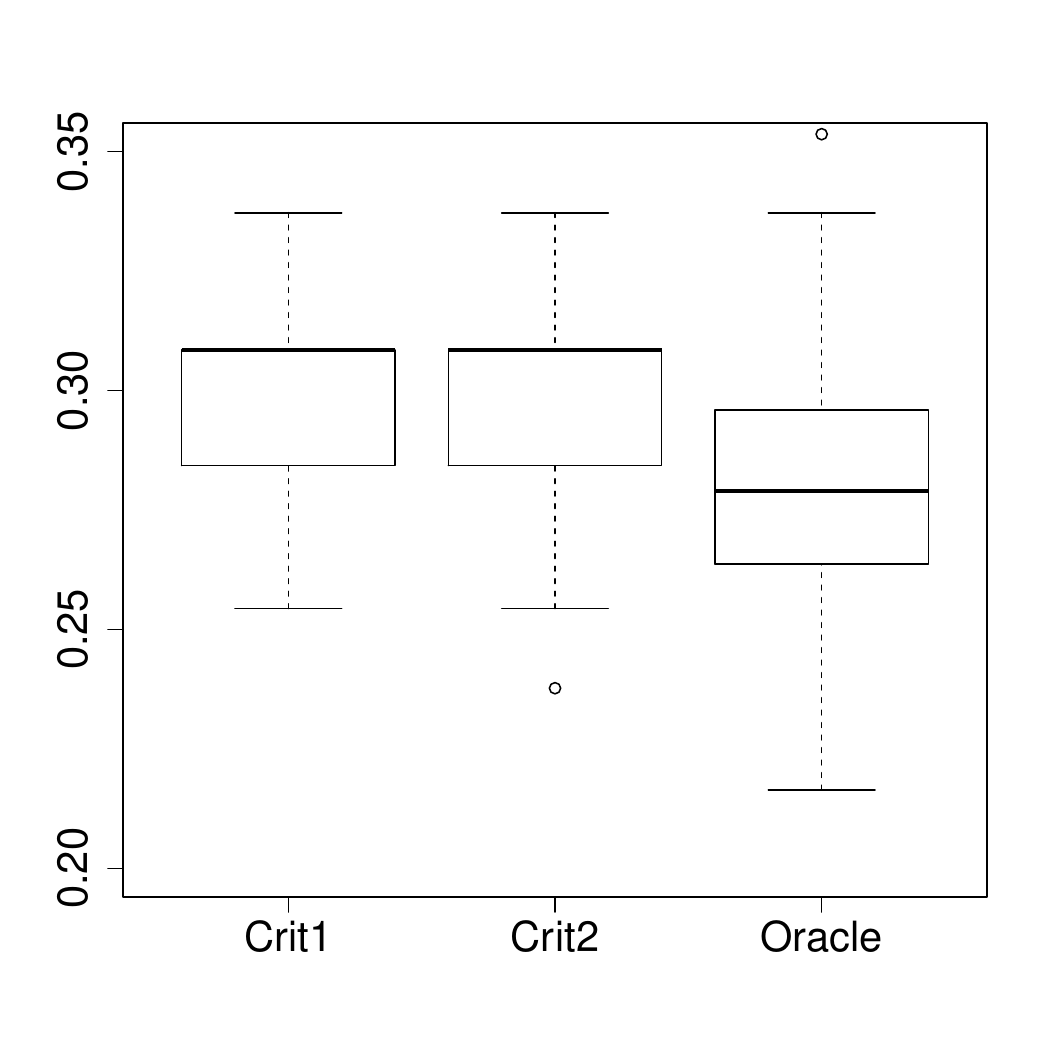}
& \parinc{\widthboxplot1}{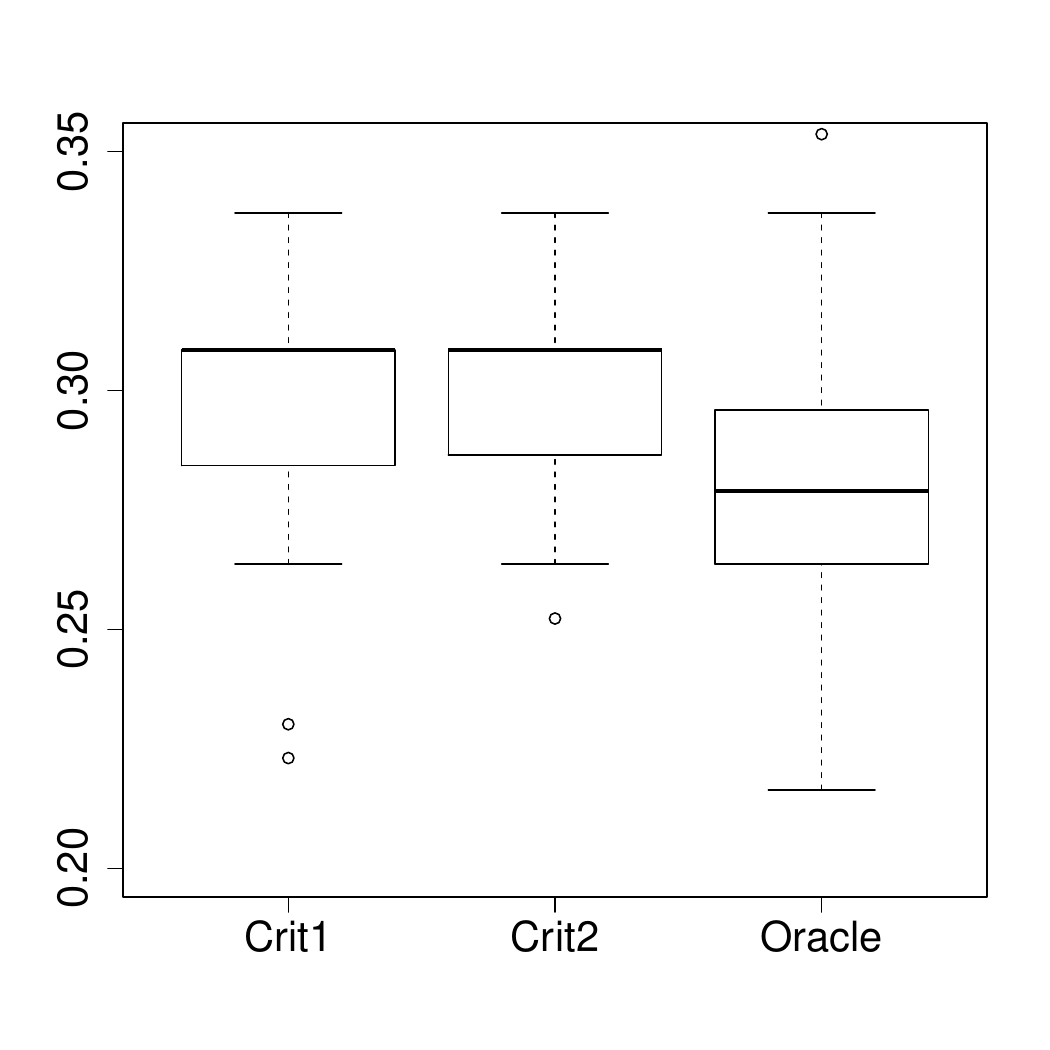} \\
\rotatebox[origin = c]{90}{Errors}
& \parinc{\widthboxplot1}{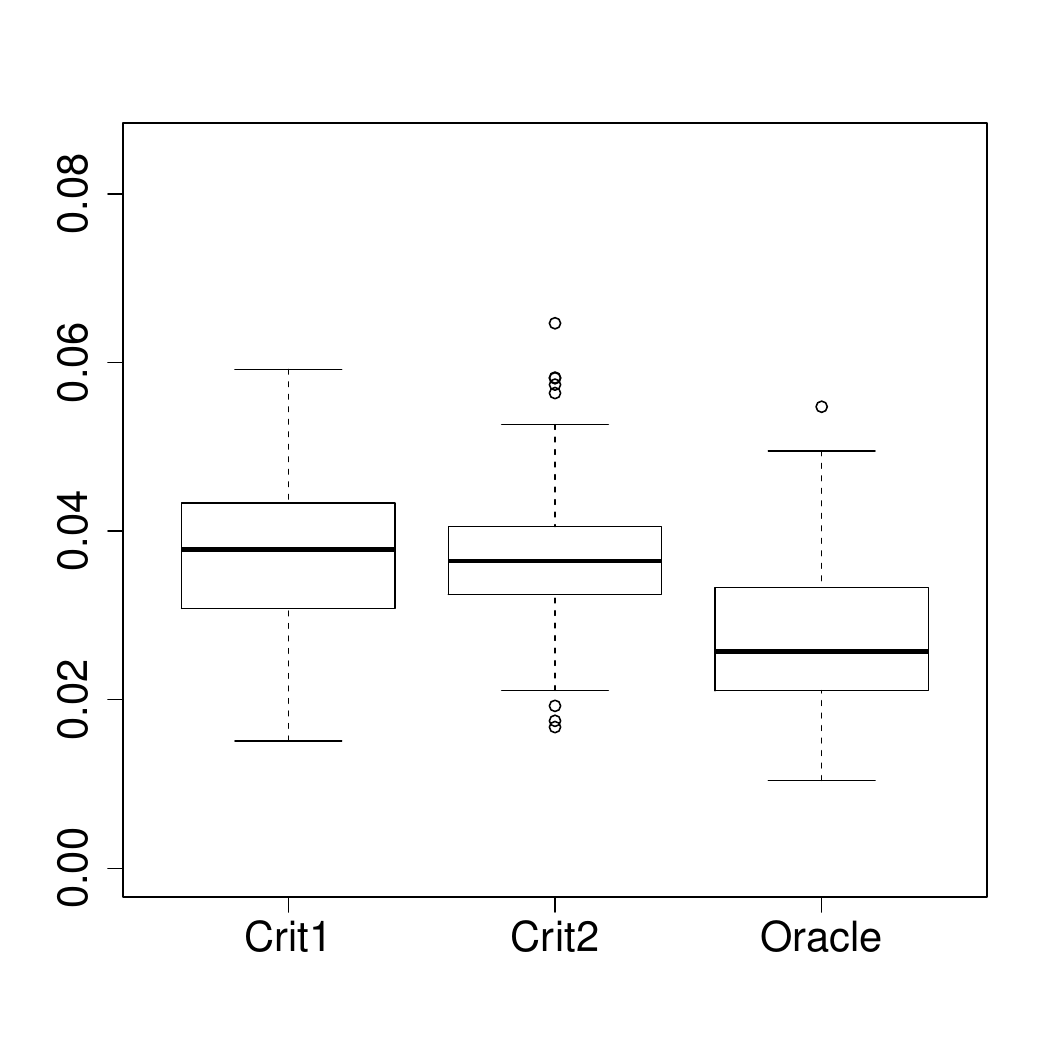}
& \parinc{\widthboxplot1}{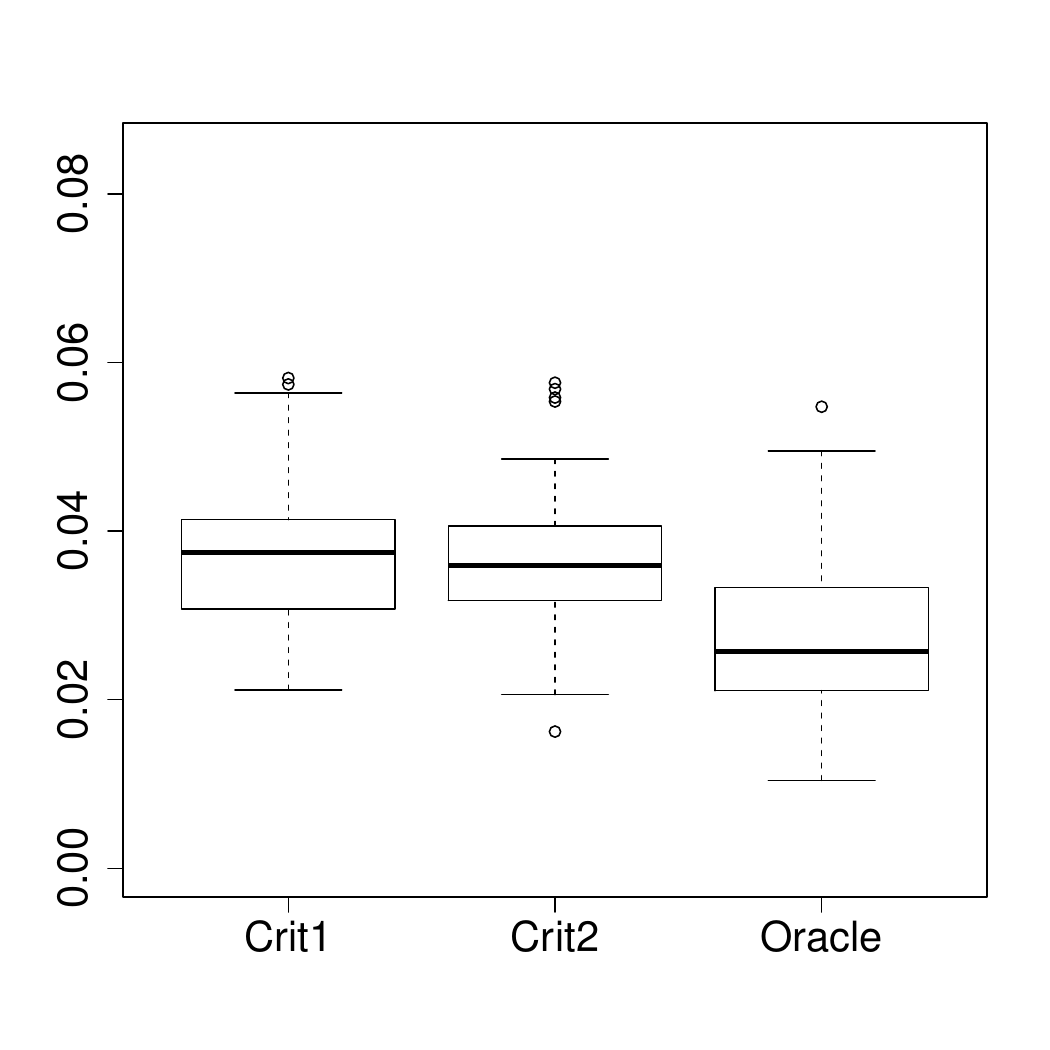}
& \parinc{\widthboxplot1}{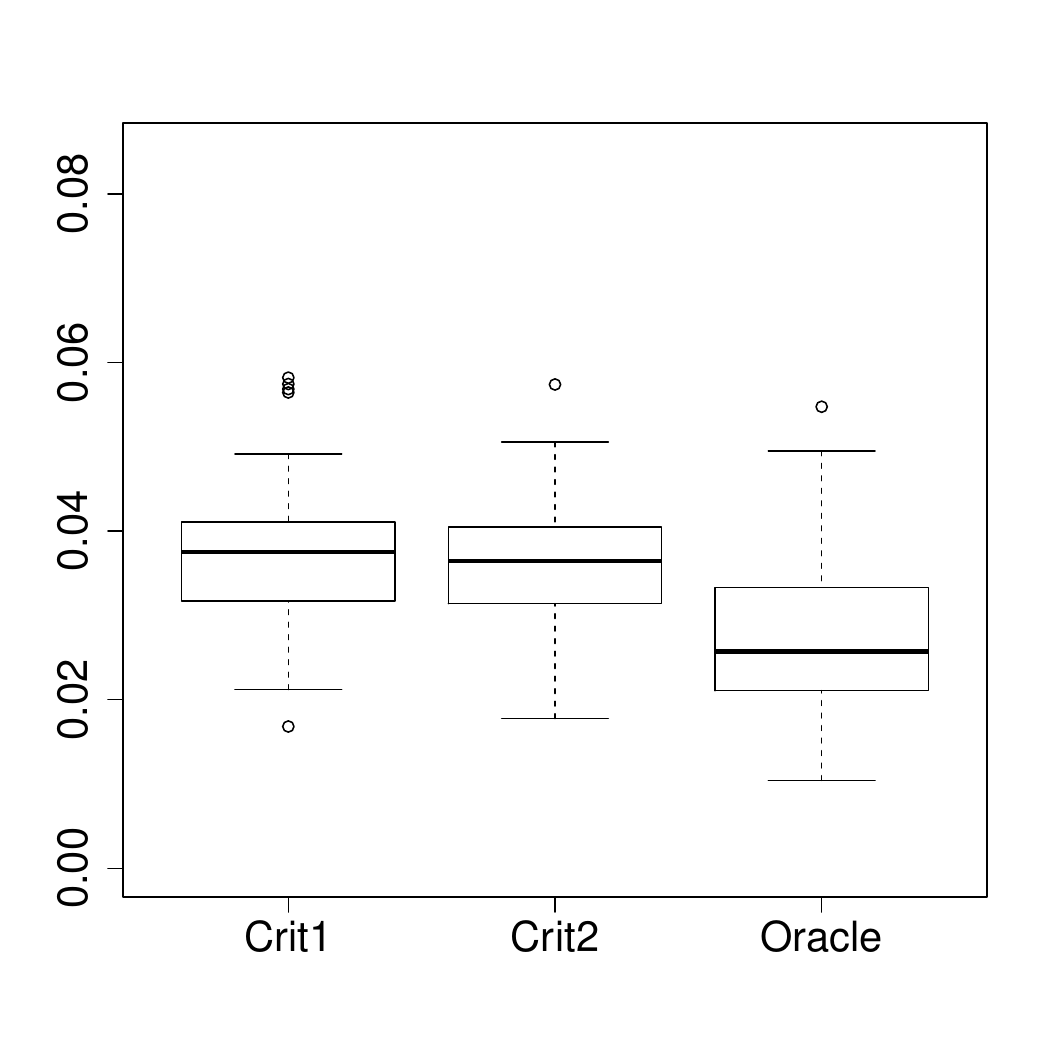} \\
\end{tabular}
\caption{\footnotesize\it  Bandwidths and errors for the estimation of $h_2$ (Truncated normal). \label{fig:box2}}
\end{figure}

\begin{figure}[!ht]
\begin{center}
\begin{tabular}{cc}
\includegraphics[scale=0.4]{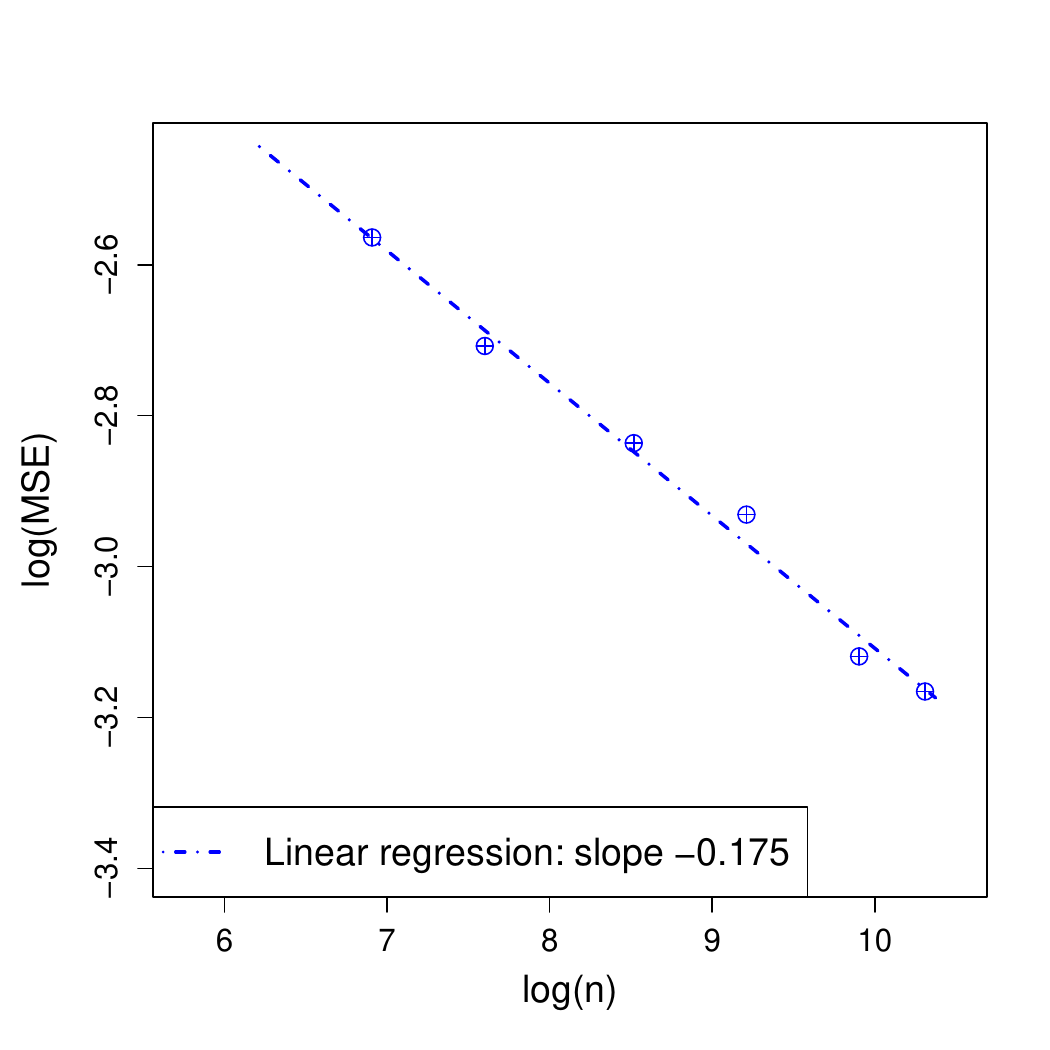} &
\includegraphics[scale=0.4]{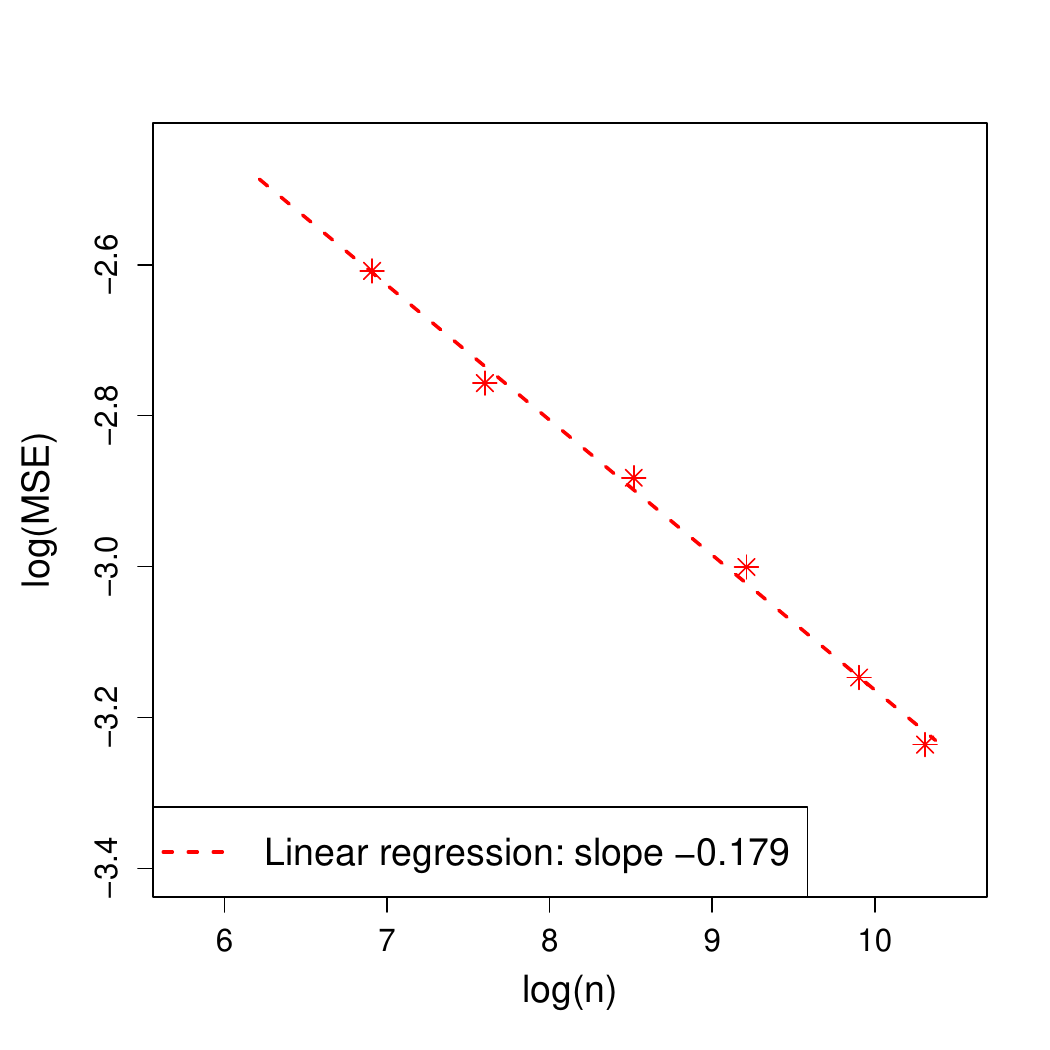} \\
(a) & (b)
\end{tabular}
\caption{\footnotesize The regression lines for log-mean square error for the reconstruction of $g_1$ (left, Eq: $y = -1.351 - 0.175x$) and the reconstruction of $g_2$ (right, Eq: $y = -1.375 - 0.179x$) versus $\log (n)$, with the sample size $n$ varying from $1,000$ to $30,000$.}
\label{fig:regline-error}
\end{center}\end{figure}

{\section{Conclusion}
Many statistical papers interested in aging phenomena for population of dividing cells concentrate on the estimation of the division rate (the constant $R$ in the present work) \cite{Bourgeron14,DoumicHoffmann12,Doumic12,Doumic17,Hoffmann14}. In these papers, the division rate is assumed to be a function that depends on certain quantities growing with time and that can be seen as ages of the cells (bad chemical contents, size...). The decrease of the division rate with respect to these quantities can be understood as senescence at the individual level. At the population level, lineages along which the distribution of these quantities tend to shift to higher values can be seen as aging or senescent lineages. In the present work, we investigate another aspect that is the kernel $h(.)$ ruling the division of the mother cell. In most of the previous works, \eg \cite{DoumicHoffmann12,Doumic12}, the mother cell divides into two identical cells. From the microscopic individual-based point of view, the senescence in lineages then arises from the sole randomness in the times of division. The asymmetry between daughter cells is however an important feature that has to be taken into account. Some recent developments have been made in this direction: by the authors \cite{Hoang2015,HoangPhD} and also by \cite{Bourgeron14,Doumic17} but from a deterministic point of view. \\
The inverse problem arising from the estimation of $h$, Equation \eqref{eq:FT-g}, is not a standard deconvolution problem. The function $M^*$ is closely linked to the function of interest $g$ (that is the function $h$ in a logarithmic scale) through Equation \eqref{eq:eigenvalue-change-variable}. Compared with classical deconvolution problems (\eg \eqref{deconv-classique} below or \cite{meister}), $M^*$ cannot be handled as an independent known noise. Firstly, the regularity and positivity of $M^*$ have to be studied and give way to involved and technical proofs. Secondly, one has to estimate $M^*$ in order to devise an estimator of $g^*$. This actually complicates the study of the proposed estimation procedure since we have to control the fluctuations of the empirical process $\widehat M_n^*$. {However, the theoretical study led in Section 3 allows us to circumvent these issues and to show consistency of our estimates. These nice  performances are also illustrated from a numerical point of view by the use of artificial data whose size is consistent with real ones.}\\
\tvc{In line with the previous comment, a natural extension would consist in deriving the bandwidth by using an alternative theoretical approach to the cross-validation type approach described in Section~\ref{sec:num-CV}. It would be natural to use, for instance, the Goldenshluger-Lepski methodology \cite{GL13} in the same spirit as \cite{ComteLacour} or the PCO methodology \cite{PCO_2016}, with the aim of deriving oracle inequalities. But such technical approaches require sharp controls of the variance of estimates and powerful concentration inequalities. Obtaining such results is beyond the scope of this paper but constitutes interesting challenges for future research.}\\
Of course, our probabilistic model could be enhanced by taking into account some observational noise and instead of the $X_i$'s we would observe,
\begin{equation}\label{deconv-classique}
Y_i= X_i + \varepsilon_i,
\end{equation}
with $X_i$ and the noise $\varepsilon_i$ being independent, which corresponds to a classical deconvolution problem if we are interested in recovering the density of $X_i$'s. The estimation of $h$ would then require to combine classical deconvolution technics and the approach of this paper.\\
\tvc{Furthermore, our study is in line with the references mentioned at the beginning of the conclusion: large populations close to their stationary states. This is justified by the exponentially fast convergence rate given in \eqref{eq:tpslong}. A possible direction for further research would be to  focus directly on the evolution problem \eqref{eq:growth-fragmentation}. Following ideas of Comte and Genon-Catalot \cite{ComteGenonCatalot}, it could be possible to study a projection estimator computed from the finite particle system on the compact time interval $[0, T]$.} These challenging inverse problems provide nice motivations for further work.
}

\clearpage

%\newpage

\appendix
\noindent{\LARGE \bf Appendix}\\[6pt]

\noindent This section is devoted to the proofs of the paper's results. $C$ is a constant whose value may change from line to line.

\section{Large population renormalization}\label{app:renorm}

Before proving the results of Section \ref{section:microscopique}, let us build the SDE satisfied by the process $(Z^K_t)_{t\geq 0}$.
Consider
$$\widetilde{Z}^K_t = \frac{1}{K}\sum_{i\in V^K_t} \delta_{(i,x_i(t))}$$the random point measure on $\mathcal{I}\times \R_+ =\cup_{\ell\geq 1}\N \times \{0,1\}^{\ell-1} \times \R_+$ with marginal measure $Z^K_t$ on $\R_+$, and that keeps track of the sizes and labels of the individuals in the population.\\

Let us consider as in Section \ref{section:microscopique} a sequence $(\widetilde{Z}^K_0)_{K\in \N^*}$ of random point measures on $\mathcal{I}\times \R_+$ such that the sequence of marginal measures $(Z^K_0)_{K\in \N^*}$ of the form \eqref{zk} converges to $\xi_0\in \mathcal{M}_F(\R_+)$ in probability and for the weak convergence topology on $\mathcal{M}_F(\R_+)$ and satisfies \eqref{eq:non-explosion-condition}.
Let also $Q(ds,di,d\gamma)$ be a Poisson point measure on $\rhoa_+\times \mathcal{E}:= \rhoa_+\times \mathcal{I} \times [0,1]$ with intensity $q(ds,di,d\gamma)=R ds\,n(di)\,h(\gamma)d\gamma$ where $n(di)$ is the counting measure on $\mathcal{I}$ and $ds$ and $d\gamma$ are Lebesgue measures on $\rhoa_+$. \\
We denote $\{\mathcal{F}_t \}_{t\ge 0}$ the canonical filtration associated with the Poisson point measure and the sequence $(\widetilde{Z}^K_0)_{K\in \N^*}$.

For a given $K\in \N^*$, it is possible to describe the measure $\widetilde{Z}^K_t$ at time $t$ by the following equation:
\begin{align}
\widetilde{Z}^K_t = & \sum_{i\in V_0^K} \delta_{(i,x_i(0)+\alpha t)} \notag \\
& +  \int_0^t \int_{\mathcal{E}}   \ind_{\{i\in V_{s_-}^K\}} \big( \delta_{(i0,\gamma x_{i}(s_-)+\alpha(t-s))}+\delta_{(i1,(1-\gamma) x_{i}(s_-)+\alpha(t-s))}-\delta_{(i,x_i(s_-) + \alpha(t-s)}\big)Q(ds,di,d\gamma),\label{eds:micro-dirac}
\end{align}
where the notation $x_i(s)$ stands for the size of the individual with label $i$ in the population $Z^K_s$ (we omit the dependence in $K$). This representation allows to take deterministic motions into account and the idea comes from \cite{Tran08,metztran}: we build the population at time $t$ by considering the contribution of the initial condition for this time $t$, and then the modifications due to all the divisions between times $0$ and $t$. The first term in the r.h.s. of \eqref{eds:micro-dirac} corresponds to the individuals alive at time $0$ with their sizes at time $t$ if they don't die. In the integral with respect to the Poisson point process, an atom at $(s,i,\gamma)$ of $Q$ corresponds to a `virtual' division event at time $s$ of the individual $i$ associated with the fraction $\gamma$. This event effectively takes place only if the individual with label $i$ is alive at time $s_-$. In this case, the Dirac masses corresponding to the mother at $t$ (at size $x_i(s_-) + \alpha(t-s)$) is replaced with the Dirac masses of the two daughters, at the size that they will have if they are still alive at time $t$ ($\gamma x_{i}(s_-)+\alpha(t-s)$ and $(1-\gamma) x_{i}(s_-)+\alpha(t-s)$).\\

The moment assumption \eqref{eq:non-explosion-condition} propagates to positive time and it is possible to show that for any $T>0$, (see \cite[Prop.3.2.5]{HoangPhD})
$$\sup_{K\in \N^*} \E\big(\sup_{t\in [0,T]} \langle Z^K_t,1\rangle^2 \big)<+\infty.$$
For every $K\in \N^*$ and every test function $f_s(x)=f(x,s) \in \mathcal{C}^{1,1}_b(\rhoa_+\times \rhoa_+,\rhoa)$, the stochastic process $(Z^K_t)_{t\in \R_+}$ satisfies:
\begin{align}
 \lefteqn{\langle Z_t^K,f_t\rangle = \langle Z_0^K, f_0\rangle + \int_0^t\int_{\R_+} \big(\pt_s f_s(x) + \alpha \pt_x f_s(x)\big)
 Z_s^K(dx)ds }\notag \\
&+ \frac 1K\int_0^t\int_{\mathcal{E}} \mathds{1}_{\{i\in V_{s-}^K\}}\Big(f_s\big(\gamma x_i(s-)\big)
+ f_s\big((1-\gamma)x_i(s_-)\big) - f_s\big(x_i(s_-)\big)\Big)Q(ds,di,d\gamma), \nonumber\\
= & \langle Z_0^K, f_0\rangle + M^{K,f}_t\label{app:SDE-renormalized}\\
& + \int_0^t\int_{\R_+} \Big(\pt_s f_s(x) + \alpha \pt_x f_s(x) + R \int_0^1 \big(f_s(\gamma x)+f_s((1-\gamma)x)-f_s(x)\big)h(\gamma)d\gamma \Big)
 Z_s^K(dx)ds ,\nonumber
\end{align}
where $(M^{K,f}_t)_{t\geq 0}$ is a square integrable martingale started at 0 with bracket:
\begin{equation}
\langle M^{K,f}\rangle_t= \frac{1}{K} \int_0^t \int_{\R_+}\int_0^1 R\big(f_s(\gamma x)+f_s((1-\gamma)x)-f_s(x)\big)^2 h(\gamma)d\gamma
 Z_s^K(dx)ds.\label{app:bracket}
\end{equation}
\textbf{The proof of Proposition \ref{prop:renorm1}} then follows the ideas in \cite{Tran08,Tran06} and are detailed in \cite{HoangPhD}. Equation \eqref{app:SDE-renormalized} corresponds to Equation \eqref{eq:martingale} in the main body.\\

\noindent \textbf{The proof of Theorem \ref{th:large-population-limit}} uses the martingale problem established in Prop. \ref{prop:renorm1} and standard arguments (see \eg \cite{EthierKurtz09,Joffe86,BansayeMeleard} and \cite[Th.1.1.8 and proof of Th.1.1.11]{Tran14}). Let us denote by $A^{K,f}$ the finite variation part of $\langle Z_t^K,f_t\rangle$:
\begin{equation}
A^{K,f}_t=\int_0^t\int_{\R_+} \left(\pt_s f_s(x) + \alpha \pt_x f_s(x) + R \int_0^1 \big(f_s(\gamma x)+f_s((1-\gamma)x)-f_s(x)\big)h(\gamma)d\gamma \right)
 Z_s^K(dx)ds.
\end{equation}
First, using the moment assumptions together with  \eqref{app:SDE-renormalized}-\eqref{app:bracket}, we can show that the sequences of real valued processes $(A^{K,f})_{K\in \N^*}$ and $(\langle M^{K,f}\rangle)_{K\in \N^*}$ are tight in $\D(\R_+,\R)$, which by the Aldous-Rebolledo condition imply the tightness of the sequence $(\langle Z^K_.,f\rangle)_{K\in \N^*}$ for all test function $f\in \Co^1_b(\R_+,\R)$. As a consequence, the sequence $(Z^K)_{K\in \N^*  }$ is tight in $\D(\R_+,(\mathcal{M}_F(\R_+),v))$, where $(\mathcal{M}_F(\R_+),v)$ means that the space of finite positive measures $\mathcal{M}_F(\R_+)$ is embedded with the topology of vague convergence.\\
Secondly, the limiting values $\bar{Z}$ to which subsequences of $(Z^K)_{K\in \N^*}$ converge vaguely, are continuous measure-valued processes of $\Co(\R_+,(\mathcal{M}_F(\R_+),w))$, where $\mathcal{M}_F(\R_+)$ is embedded with the weak convergence topology.\\
Thirdly, proceeding as in \cite[proof of Th.1.1.11]{Tran14} (see also \cite{Jourdain12,Meleard12}), we can prove that
$$\lim_{k\rightarrow +\infty}\lim_{K\rightarrow +\infty}\E\big(\sup_{t\leq T}\langle Z^K_t,\varphi_k\rangle\big)=0,$$
where the functions $\varphi_k$ are $\Co^2$ approximations of $\ind_{\{x\geq k\}}$ for $k\in \N$ and are defined by $\varphi_0(x)=1$ and for all $k\in \N^*$, $\varphi_{k}(x)=\psi(0 \vee (x-k+1)\wedge 1)$ with $\psi(x)=6x^5-15x^4+10x^3$. This ensures that for every subsequence of $(Z^K)_{K\in \N^*}$ that converges vaguely to a limiting process $\bar{Z}$, their masses converge in distribution to $\langle \bar{Z},1\rangle$, which provides the tightness in $(\mathcal{M}_F(\R_+),w)$ by a criterion due to M\'el\'eard and Roelly \cite{Meleard93}.\\
We can now establish that the limiting values to which subsequences of $(Z^K)_{K\in \N^*}$ converge in $\D(\R_+,(\mathcal{M}_F(\R_+),w))$ are solutions of \eqref{eq:limit-PDE} (see \cite{HoangPhD}). This integro-differential equation admits a unique solution. Indeed, let $\xi^1$ and $\xi^2$ be two solutions of \eqref{eq:limit-PDE} starting with the same initial condition $\xi_0$. For a test function $\varphi\in \Co^1_b(\R_+,\R)$ and $t>0$, setting
\begin{equation}f(x,s)=f_s(x)=\varphi(x+\alpha(t-s)),\label{app:test-function-f(x,s)}
\end{equation}we obtain that for $i\in \{1,2\}$,
\begin{align*}
\langle \xi^i_t,\varphi\rangle= \langle \xi_0,\varphi(.+\alpha t)\rangle +\int_0^t \int_{\R_+} \int_0^1 R \big(f_s(\gamma x)+f_s((1-\gamma)x)-f_s(x)\big)h(\gamma)d\gamma \, \xi_s^i(dx)\, ds.
\end{align*}
Substracting these two equations for $i=1$ and $i=2$, we obtain
$$\|\xi_t^1-\xi_t^2\|_{TV}\leq 3 R \|\varphi\|_\infty \int_0^t \|\xi_s^1-\xi_s^2\|_{TV} ds$$
where $\|.\|_{TV}$ stands for the total variation norm. Gronwall's inequality concludes the proof of uniqueness of the solution of \eqref{eq:limit-PDE}. Since the limiting value of $(Z^K)_{K\in \N^*}$ is unique, the sequence hence converges in $\D(\R_+,(\mathcal{M}_F(\R_+),w))$ to this unique solution. This concludes the proof of Theorem \ref{th:large-population-limit}.\\

\noindent \textbf{The proof of Proposition \ref{prop:PDE}} is detailed in \cite{HoangPhD} (see also \cite{Tran08}).
First, notice that if $\xi_0(dx)$ admits a density $n_0(x)$ with respect to the Lebesgue measure, then for any $t>0$, $\xi_t$ also admits a density. Indeed, for a function $\varphi\in \Co^1(\R_+,\R_+)$ with non-negative values, let us define the test function $f(x,s)$ as in \eqref{app:test-function-f(x,s)}. Then, neglecting the negative terms in the second line of \eqref{eq:limit-PDE} and using the symmetry of $h$ with respect to $1/2$:
\begin{align*}
\langle \xi_t,\varphi\rangle \leq & \int_{\R_+} \varphi(x+\alpha t)n_0(x)dx+ 2 R \int_0^t \int_{\R_+} \int_0^1  \varphi(\gamma x+\alpha(t-s)) h(\gamma)d\gamma\, \xi_s(dx)\, ds\nonumber\\
= & \int_{\alpha t}^{+\infty} \varphi(y)n_0(y-\alpha t) dy + 2 R \int_0^t  \varphi(\alpha(t-s))  \xi_s(\{0\}) \, ds\nonumber\\
& \hspace{1.1cm} + 2R \int_0^t \int_{\R_+\setminus \{0\}} \int_{\R} \ind_{(\alpha(t-s),x+\alpha(t-s))}(y) \varphi(y) h\left(\frac{y-\alpha(t-s)}{x}\right)\frac{dy}{x}\, \xi_s(dx) \, ds\nonumber\\
 = &  \int_{\alpha t}^{+\infty} \varphi(y)n_0(y-\alpha t) dy + 2 R \int_0^{\alpha t}  \varphi(y)  \xi_{t-\frac{y}{\alpha}}(\{0\}) \, \frac{dy}{\alpha}\nonumber\\
& \hspace{1.1cm} + 2R \int_0^{+\infty} \Bigg\{\int_0^t \int_{\R_+\setminus \{0\}}  \ind_{(\alpha(t-s),x+\alpha(t-s))}(y) \frac{1}{x}h\left(\frac{y-\alpha(t-s)}{x}\right) \xi_s(dx) \, ds\Bigg\} \varphi(y)dy.
\end{align*}Since $\xi_t$ is dominated by a nonnegative measure absolutely continuous with respect to the Lebesgue measure on $\R_+$ it follows that $\xi_t$ admits itself a density. \tvc{Let us denote by $n(t,x)$ the density of $\xi_t$ with respect to the Lebesgue measure $dx$ on $\R_+$. Then, for a non-negative test function $f\in \Co_b^1(\R_+,\R_+)$ depending only on $x$ and using the symmetry of $h$, \eqref{eq:limit-PDE} becomes:
\begin{align}
\int_{\R_+} f(x)n(t,x)dx= & \int_{\R_+} f(x)n_0(x)dx+\int_0^t \int_{\R_+} \alpha f'(x) n(s,x)dx\ ds \nonumber\\
+ &
 \int_0^t \int_{\R_+} 2R \int_0^1 f(\gamma x)h(\gamma)d\gamma\  n(s,x) dx\ ds - R  \int_0^t \int_{\R_+} f(x) n(s,x)dx\ ds\nonumber\\
 = & \int_{\R_+} f(x)n_0(x)dx +\int_0^t \int_{\R_+} \alpha f'(x) n(s,x)dx\ ds  \nonumber\\
 + & \int_0^t \int_{\R_+} f(x) \times 2R \int_x^{+\infty} h\Big(\frac{x}{y}\Big) n(s,y) \frac{dy}{y}\ dx\ ds - R  \int_0^t \int_{\R_+} f(x) n(s,x)dx\ ds,\label{etape13}
 \end{align}where we used Fubini's Theorem for the third term in the right hand side. For the second term in the r.h.s., integrating by part gives:
\begin{align}
\int_0^t \int_{\R_+} \alpha f'(x) n(s,x)dx\ ds  = &  \int_0^t \Big\{ \big[\alpha f(x)n(s,x)\big]_0^{+\infty}  -  \int_{\R_+} \alpha f(x) \partial_x n(s,x)dx\Big\} ds \nonumber\\
= &  -  \alpha f(0)\int_0^t n(s,0) ds - \int_0^t \int_{\R_+} \alpha f(x) \partial_x n(s,x)dx\ ds.\label{etape14}
\end{align}Gathering \eqref{etape13} and \eqref{etape14} that are true for any test function $f$ and time $t$, we can identify the equations satisfied by $(n(t,x), x\in \R_+, t>0)$. We find that $n(t,0)=0$ for every $t\geq 0$ and that $(n(t,x), x\in \R_+, t>0)$ solves in distribution sense \eqref{eq:growth-fragmentation} for which uniqueness of the solution holds (\eg \cite[Theorem 4.3 p.90]{Perthame07}). }\\

%If the initial density $n_0\in \Co^1(\R_+,\R_+)$ is such that $(1+|x|)n_0(x)\in L^1(\R_+,\R_+)$, then there exists a unique strong solution $(n(x,t), x\in \R_+, t>0)$ of the integro-differential equation \eqref{eq:growth-fragmentation} with initial condition $n_0$ (see \cite[Th.4.3 p.90]{Perthame07}).

\noindent \textbf{The proof of Proposition \ref{prop:renorm2}} is a particular case of \cite[Th.4.6 p. 94]{Perthame07} based on Krein-Rutman theorem (\eg \cite[Th.6.5 p.175]{Perthame07}) (see also \cite{Doumic10}). In the case that we consider, the proof can be simplified compared with \cite{Perthame07}. \\
Let us consider the eigenelements $(\lambda, N,\phi)$ associated with \eqref{eq:growth-fragmentation}, \ie the solution of:
\begin{equation}\label{eq-app:eigenvalue}
\begin{cases}
 &\alpha \pt_x N(x) + (\lambda + R)N(x) = 2R\int_0^1 N\left(\frac{x}{\gamma}\right)h(\gamma) \dfrac{d\gamma}{\gamma} ,\quad x\ge 0, \\
 &N(0) = 0,\quad \int N(x)dx =1,\quad N(x) \ge 0, \quad \lambda >0,\\
 & \alpha \pt_x \phi(x)-(\lambda+R) \phi(x)=-2R \int_0^1 \phi(\gamma x)h(\gamma) d\gamma,\quad x\ge 0, \\
 & \phi(x)\geq 0,\quad \int_0^{+\infty}\phi(x)N(x)dx=1.
\end{cases}
\end{equation}
It is clear that $\lambda=R$ and $\phi\equiv 1$ solve the third equation of \eqref{eq-app:eigenvalue}. Because the first line is linear in $N$, we can forget for the proof the condition $\int N(x)dx=1$: if there exists a nonnegative integrable solution, we can renormalize it.\\

%We will now prove that there exists a unique solution $N$ to
%\begin{equation}\label{eq-app:eigenvalue2}
%\begin{cases}
% &\alpha \pt_x N(x) + 2R \, N(x) = 2R\int_0^1 N\left(\frac{x}{\gamma}\right)h(\gamma) \dfrac{d\gamma}{\gamma}  ,\quad x\ge 0, \\
% &N(0) = 0,\quad \int N(x)dx =1,\quad N(x) \ge 0.
% \end{cases}
%\end{equation}

\noindent \textit{Step 1}: Let us consider the following auxiliary PDE, for a constant $\mu>0$ and two functions $f\in \Co(\R_+,\R_+)$, and $M\in \L^1(\R_+,\R)\cap \Co(\R_+,\R_+)$:
\begin{equation}\label{eq-app:eigenvalue4}
\alpha \pt_x N(x) + (\mu+R) \, N(x) - 2R\int_0^1 M\left(\frac{x}{\gamma}\right)h(\gamma) \dfrac{d\gamma}{\gamma} = f(x) ,\quad x\ge 0 \quad ; \quad N(0)=0.
\end{equation}
Equation \eqref{eq-app:eigenvalue4} is a first order ODE that can be solved with the variation of constant method. It admits a unique solution, that we denote $T(M)$:
$$T(M)(x)=\frac{1}{\alpha}\int_0^x e^{-\frac{\mu+R}{\alpha}(x-y)}  \Big(2R\int_0^1 M\left(\frac{y}{\gamma}\right)h(\gamma) \dfrac{d\gamma}{\gamma} + f(y)\Big)  dy.$$
Consider $M_1$ an $M_2\in \L^1(\R_+,\R)\cap \Co(\R_+,\R_+)$. Then, for $x\geq 0$:
%\begin{align*}
%\alpha \pt_x (N_1(x)-N_2(x))+(\mu+R) (N_1(x)-N_2(x))=2R \int_0^1  \left( M_1\Big(\frac{x}{\gamma}\Big)-M_2\Big(\frac{x}{\gamma}\Big) \right) h(\gamma) \dfrac{d\gamma}{\gamma},
%\end{align*}from which we deduce that
%\begin{align*}
%\alpha \pt_x |N_1(x)-N_2(x)|+(\mu+R) |N_1(x)-N_2(x)|\leq 2R \int_0^1  \left| M_1\Big(\frac{x}{\gamma}\Big)-M_2\Big(\frac{x}{\gamma}\Big) \right| h(\gamma) \dfrac{d\gamma}{\gamma},
%\end{align*}from which we deduce that
\begin{align}
|T(M_1)(x)-T(M_2)(x)|
%\leq & \frac{2R}{\alpha}\int_0^x e^{-\frac{\mu+R}{\alpha}(x-y)} \int_0^1 \left| M_1\Big(\frac{y}{\gamma}\Big)-M_2\Big(\frac{y}{\gamma}\Big) \right| h(\gamma) \dfrac{d\gamma}{\gamma} dy \nonumber \\
\leq & \frac{2R}{\alpha} \int_0^1 \int_0^{x/\gamma} e^{-\frac{\mu+R}{\alpha}(x-\gamma z)}  | M_1(z)-M_2(z) | h(\gamma) dz\, d\gamma \nonumber \\
%\leq & \frac{2R}{\alpha}  \| M_1-M_2\|_\infty \int_0^1 \frac{\alpha}{(\mu+R)\gamma} \big(1-e^{-\frac{(\mu+R)x}{\alpha}}\big)  h(\gamma)  d\gamma \nonumber\\
\leq & \frac{2R}{\mu+R}  \big(1-e^{-\frac{(\mu+R)x}{\alpha}}\big)  \int_0^1 \frac{h(\gamma)}{\gamma}   d\gamma \| M_1-M_2\|_\infty.\label{etape}
\end{align}Provided the integral in the term above is finite, then for
$\mu>2R \int_0^1 h(\gamma)/\gamma \, d\gamma -R$, the map $M\in \L^1(\R_+,\R)\cap \Co(\R_+,\R_+) \mapsto T(M)\in \L^1(\R_+,\R)\cap \Co(\R_+,\R_+)$ is a contraction. Thus it admits a unique fixed point that is the unique solution of
\begin{equation}\label{eq-app:eigenvalue3}
 \alpha \pt_x N(x) + (\mu+R) \, N(x) - 2R\int_0^1 N\left(\frac{x}{\gamma}\right)h(\gamma) \dfrac{d\gamma}{\gamma} = f(x) ,\quad x\ge 0\quad ;  \quad N(0) = 0.
\end{equation}
\noindent \textit{Step 2}: The map $A$ that associates to $f\in \Co(\R_+,\R_+)\cap \L^1(\R_+,\R_+)$ the unique corresponding solution of \eqref{eq-app:eigenvalue3} is thus well defined. Following the path of \cite[Section 6.6.2]{Perthame07}, we can show that this map is linear, continuous (with computation similar to \eqref{etape}) and strongly positive. Finally, the boundedness of $N$ implies the boundedness of $\pt_x N$, with norms controlled by $\|f\|_\infty$. This allows to use Arzela-Ascoli theorem to obtain the compactness of the map $A$. We can then use Krein-Rutman theorem (using similar truncations as in \cite{Perthame07}) to obtain that the spectral radius of $A$, $\rho(A)$, is a positive simple eigenvalue associated with a positive eigenvector satisfying:
\begin{equation}\label{eq-app:eigenvalue5}
 \alpha \pt_x N(x) + \left(\mu+R-\frac{1}{\rho(A)}\right) \, N(x) - 2R\int_0^1 N\left(\frac{x}{\gamma}\right)h(\gamma) \dfrac{d\gamma}{\gamma} = 0 ,\quad x\ge 0 \quad ;  \quad N(0) = 0.
\end{equation}
The fact that $\lambda:=\mu+R-\frac{1}{\rho(A)}$ is equal to $2R$ is a consequence of integrating the direct equation against the the adjoint eigenvector (here $\phi \equiv 1$) and using that $\int N(x)dx=1$.\\

\noindent \textit{Step 3}: The computation to establish the speed of convergence of $n(t,x)e^{-Rt}$ to $\rho N(x)$ stated in \eqref{eq:tpslong}, are obtained by generalizing the proof of \cite[Th.4.2 p.88]{Perthame07} (see also \cite{perthameryzhik}). Define $g(t,x)=n(x,t)e^{-Rt}- \rho N(x)$, $G(t,x)=\int_0^x g(t,y)dy$ and $K(t,x)=\partial_t G(t,x)$.
One can write the PDEs satisfied by $g$ and $G$. The PDE for $G$ implies that
%Starting from \eqref{eq:growth-fragmentation} and \eqref{eq:eigenvalue}, one can show that
%\begin{align}
%& \partial_t g(t,x)+\alpha \partial_x g(t,x)= 2R \Big(\int_0^1 g\big(t,\frac{x}{u}\big) h(u)\frac{du}{u}-g(t,x) \Big),\label{etape1}\\
%& \hspace{2cm} g(t,0)=0,\qquad \int_0^{+\infty}g(t,x)dx=0.\nonumber\\
%& \partial G(t,x)+\alpha \partial_x G(t,x)= 2R \Big(\int_0^1 G\big(t,\frac{x}{u}\big)h(u)du-G(t,x)\Big),\label{etape2}\\
%& \hspace{2cm} G(t,0)=0,\qquad \lim_{x\rightarrow +\infty}G(t,x)=0.\nonumber
%\end{align}
%From \eqref{etape2}, we have:
%\begin{equation*}
%\partial_t \big(G(t,x)e^{Rt}\big)+\alpha \partial_x \big(G(t,x)e^{Rt}\big)= 2R e^{Rt}\int_0^1 G\big(t,\frac{x}{u}\big)h(u)du - R e^{Rt} G(t,x),
%\end{equation*}from which we deduce that
%\begin{equation*}
%\partial_t \big|G(t,x)e^{Rt}\big|+\alpha \partial_x \big|G(t,x)e^{Rt}\big|+ R \big|G(t,x)e^{Rt}\big|\leq 2R \int_0^1 \big| G\big(t,\frac{x}{u}\big)e^{Rt}\big| h(u)du.
%\end{equation*}Integrating w.r.t. $x$ and using that $\int_0^1 uh(u)du=1/2$,
%we obtain
$\partial_t \int_0^{+\infty} \big|G(t,x)e^{Rt}\big| dx \leq 0$. As a consequence,
\begin{equation}
\int_0^{+\infty} |G(t,x)|dx\leq e^{-Rt} \|G_0\|_1. \label{etape3}
\end{equation}
From the PDE of $g$, $K(0,x)=\partial_t G(t,x)|_{t=0}=2R \int_0^1 G_0(x/u)h(u)du-2RG_0(x)-\alpha g_0(x)$.
Proceeding similarly as for $G$, we show that
\begin{align}
\int_0^{+\infty} |K(t,x)|dx\leq & e^{-Rt} \int_0^{+\infty} |K(0,x)|dx \leq  e^{-Rt} \big(3R \|G_0\|_1+\alpha \| g_0\|_1\big).\label{etape4}
\end{align}Plugging \eqref{etape3} and \eqref{etape4} in the PDE of $g$ (where we notice that $g(t,x)=\partial_x G(t,x)$), we obtain
%that
%\begin{align*}
%\int_0^{+\infty} |g(t,x)|dx \leq & e^{-Rt} \Big(\frac{6R}{\alpha} \|G_0\|_1 + \|g_0\|_1\Big),
%\end{align*}which is
the result announced in the proposition.

\section{Proof of Proposition \ref{lem:regM}}\label{app:lemmas}

\begin{proof}[Proof of Proposition \ref{lem:regM} (i)]
Let $\epsilon >0$ to be chosen small enough. Since $N$ is a probability density, we have for $\nu\leq (\nu_0+2)\wedge ([\beta]+1)$:
\[
\int_0^{+\vc} x^{-\nu} N(x)dx \le \int_0^\epsilon x^{-\nu} N(x)dx + \frac{1}{\epsilon^{\nu}}.
\]
%\begin{align*}
%\int_0^{+\vc} x^{-\nu} N(x)dx &= \int_0^\epsilon x^{-\nu} N(x)dx + \int_\epsilon^{+\vc} x^{-\nu} N(x)dx \\
%&\le \int_0^\epsilon x^{-\nu} N(x)dx + \frac{1}{\epsilon^{\nu}}\int_\epsilon^{+\vc}N(x)dx \\
%&\le \int_0^\epsilon x^{-\nu} N(x)dx + \frac{1}{\epsilon^{\nu}}.
%\end{align*}
Hence, it remains to prove
\[
\int_0^\epsilon x^{-\nu} N(x)dx <+\vc.
\]
We follow and adapt the steps of the proof of Theorem 1 in Doumic and Gabriel \cite{Doumic10}.
Integrating both side of equation \eqref{eq:eigenvalue} between $0$ and $x_0 \leq x$, we get:
\begin{equation}\label{eq:tmp00}
\alpha N(x_0) + 2R \int_0^{x_0} N(y)dy = 2R \int_0^{x_0}\int_0^{+\infty} N(y)h\left(\dfrac{z}{y}\right)\dfrac{dy}{y}dz.
\end{equation}
Thus,
\[
\alpha N(x_0) \le 2R\int_0^{x_0}\int_0^{+\infty} N(y)h\left(\dfrac{z}{y}\right)\dfrac{dy}{y}dz \le
2R \int_0^{x}\int_0^{+\infty} N(y)h\left(\dfrac{z}{y}\right)\dfrac{dy}{y}dz.
\]
Let us define:
\[
f: x \mapsto \underset{x_0\in (0,x]}{\sup} N(x_0),
\]
then we have for all $x$
\begin{equation}\label{eq:tmp01}
f(x) \le \frac{2R}{\alpha}\int_0^x \int_0^{+\infty} N(y)h\left(\dfrac{z}{y}\right)\dfrac{dy}{y}dz.
\end{equation}
Recall Assumption \ref{assump:moment-h}. Using a Taylor expansion, it implies that for any $t \in (0,1)$,
\begin{equation}
\int_0^t h(x) dx \le C \int_0^t x^{(\nu_0+1)\wedge [\beta]} dx\leq Ct^{(\nu_0+2)\wedge ([\beta]+1)}\leq C t^\nu
\end{equation}by choice of $\nu\leq (\nu_0+2)\wedge ([\beta]+1)$.
Then, we have for all $x < \epsilon$:
\begin{align*}
f(x) &\le \frac{2R}{\alpha}\int_0^{+\infty}N(y)dy \int_0^x h\left(\dfrac{z}{y}\right)\dfrac{dz}{y} \\
&\le \frac{2R}{\alpha}\int_0^{+\infty} N(y)\min\left(1,C\frac{x^\nu}{y^\nu}\right)dy \\
&\le \frac{2R}{\alpha}\left(\int_0^x N(y)dy + C\int_x^{\epsilon} N(y)\frac{x^\nu}{y^\nu}dy
+ C\int_{\epsilon}^{+\infty} N(y)\frac{x^\nu}{y^\nu}dy \right) \\
&\le \frac{2R}{\alpha}\left(\int_0^x \underset{z\in(0,x]}{\sup}N(z)dy + Cx^\nu \int_x^{\epsilon} \underset{z\in (0,y]}{\sup}N(z) \frac{dy}{y^\nu} \right) +  \left(\frac{2CR}{\alpha}\int_{\epsilon}^{+\vc}\frac{N(y)}{y^\nu}dy\right)x^\nu  \\
&\le \frac{2R\epsilon}{\alpha}f(x) + \frac{2CRx^\nu}{\alpha}\int_x^{\epsilon} \frac{f(y)}{y^\nu}dy + Kx^\nu,
\end{align*}
with $K = \dfrac{2CR}{\alpha \epsilon^\nu}$.
We choose $\epsilon$ such that
\[
0 < \epsilon < \frac{\alpha}{2R}.
\]
and by setting $F(x) = x^{-\nu}f(x)$, we get
\begin{align}
F(x) \le \frac{K}{1 -  \frac{2R\epsilon}{\alpha}}  + \frac{2CR}{\alpha - 2R\epsilon}\int_x^{\epsilon} F(y)dy. \label{eq:tmp02}
\end{align}
Then, applying Gronwall's inequality to \eqref{eq:tmp02}, we obtain
\[
F(x) \le \frac{K}{1 - \frac{2R\epsilon}{\alpha}}\exp\left(\frac{2CR\epsilon }{\alpha - 2R\epsilon} \right)= :\tilde{C},\quad \forall x\in [0,\epsilon]
\]
and
\[
x^{-\nu}N(x) \le \tilde{C}, \quad\forall x\in [0,\epsilon].
\]
We finally obtain
\[
\int_0^\epsilon x^{-\nu}N(x)dx \le \tilde{C}\epsilon < + \vc.
\]
This ends the proof of Proposition \ref{lem:regM}(i).

\end{proof}

\begin{proof}[Proof of Proposition \ref{lem:regM}(ii)]Let us first notice that by the fixed point theorem in the proof of Proposition \ref{prop:renorm2}, $N$ is continuous as uniform limit of a sequence of continuous functions. Let us show that under Assumption \ref{assump:moment-h} the map
$$\Phi \, :\, x \in (0,+\infty)\mapsto \int_x^{+\infty} N(y)h\Big(\frac{x}{y}\Big)\frac{dy}{y}$$
is of class $\Co^{[\beta]}$ on $(0,+\infty)$. We proceed by induction, and start by computing the first derivative of $\Phi$ for $x>0$.
\begin{align}
\frac{\Phi(x+\varepsilon)-\Phi(x)}{\varepsilon}= &  -\frac{1}{\varepsilon} \int_x^{x+\varepsilon} N(y) h\Big(\frac{x}{y}\Big)\frac{dy}{y}
+ \int_{x+\varepsilon}^{+\infty} N(y) \frac{y}{\varepsilon}\Big[h\Big(\frac{x+\varepsilon}{y}\Big)-h\Big(\frac{x}{y}\Big)\Big]\frac{dy}{y^2}\nonumber\\
\rightarrow_{\varepsilon\rightarrow 0} & -\frac{N(x)h(1)}{x}+\int_x^{+\infty}N(y)h'\Big(\frac{x}{y}\Big)\frac{dy}{y^2}=\int_x^{+\infty}N(y)h'\Big(\frac{x}{y}\Big)\frac{dy}{y^2},\label{etape2}
\end{align}since $h(1)=h(0)=0$ by Assumption \ref{assump:moment-h}. This shows that $\Phi$ is of class $\Co^1$. Plugging this information into \eqref{eq:eigenvalue}, it follows that $\partial_x N$ is continuous, and hence $N$ is of class $\Co^1$ and thus $\partial_x N$ also. This entails from the computation of $\Phi'$ that $\Phi$ is of class $\Co^2$.\\
Suppose that we have computed the successive derivatives of $\Phi$ up to $k-1$ and that we have shown that $N$ is of class $\Co^{k-1}$ for $k\leq [\beta]\wedge \nu_0$. Then, since the successive derivatives of $h$ at 0 vanish by Assumption \ref{assump:moment-h},
$$\Phi^{(k)}(x)=\int_x^{+\infty}N(y)h^{(k)}\Big(\frac{x}{y}\Big)\frac{dy}{y^{k+1}}.$$
Since $N$, $h$ and their derivatives are bounded functions, the latter integrals are always finite for $x>0$. This implies that $\Phi$ is of class $\Co^k$ and that using this information in \eqref{eq:eigenvalue}, $\partial_x N$ is of class $\Co^{k-1}$ entailing that $N$ is of class $\Co^k$. As the computation of the first derivative of $\Phi$ shows, we are limited by the regularity of $h$. \\
So we finally have that $x\mapsto N(x)$ is of class $\Co^{[\beta]}$, and thus, $u\mapsto M(u)$ is also of class $\Co^{[\beta]}$. \\

Take $k\leq [\beta]$. That $M$ is of class $\Co^{[\beta]}$ implies that $(\ic \xi)^{k} M^*(\xi)$ is the Fourier transform of $M^{(k)}$ and bounded on $\R$ provided we additionally prove that the derivatives of $M$ up to the order $k$ are integrable. Since $M(u)=e^{u}N(e^u)$, $M^{(k)}$ is a linear combination of terms of the form $e^{(\ell +1)u}N^{(\ell)}(e^u)$ with $\ell\leq k$. We thus have to check the finiteness, for all $\ell\leq k$, of:
\begin{equation}\label{etape10}
\int_\R e^{(\ell+1) u}\big|N^{(\ell)}(e^u)\big|du =\int_0^{+\infty} v^{\ell} \big|N^{(\ell)}(v)\big|dv.
\end{equation}
It is known (as a direct adaptation of \cite[Th.4.6 p.95]{Perthame07} for example) that as soon as $\mu<R/\alpha$,
\begin{equation}\label{momentexpN}N(x)e^{\mu x}\in \L^1(\R_+,\R_+) \cap \L^\infty(\R_+,\R_+).
\end{equation}
Assume that for some $\ell< k$, we have proved that $\int_0^{+\infty}e^{\mu x}\big|N^{(\ell)}(x)\big|dx<+\infty$ for $\mu<R/\alpha$. Let us prove that this also holds for $\ell+1$, which would entail \eqref{etape10}.
Deriving \eqref{eq:eigenvalue} $\ell$ times, multiplying by $e^{\mu x}$ and integrating again in $x\in(0,+\infty)$, we obtain:
\begin{align}\alpha \int_0^{+\infty} e^{\mu x}\big| N^{(\ell+1)}(x) \big|dx \leq & 2R \int_0^{+\infty} e^{\mu x} \big| N^{(\ell)}(x) \big| dx +   2R \int_0^{+\infty} e^{\mu x} \Big|\int_x^{+\infty} h^{(\ell)}\Big(\frac{x}{y}\Big) \frac{N(y)}{y^{\ell+1}}dy\Big| dx\nonumber\\
\leq &  2R \int_0^{+\infty} e^{\mu x} \big| N^{(\ell)}(x) \big| dx + 2R \int_0^{+\infty} \frac{N(y)e^{\mu y}}{y^{\ell}} \|h^{(\ell)}\|_1 dy.\label{etape12}
\end{align}
%Using that
%$$e^{\mu x}N^{(\ell+1)}(x)=\partial_x^{\ell+1}\big(e^{\mu x} N(x)\big)-e^{\mu x}\sum_{k=1}^{\ell+1}{\ell  +1 \choose k}\mu^k N^{(\ell+1-k)}(x),$$
%we obtain that
%\begin{multline}\alpha \big[\partial_x^{\ell}\big(e^{\mu x}N(x)\big)\big]_0^{+\infty}  + (2R-\alpha (\ell+1) \mu) %\int_0^{+\infty} e^{\mu x} N^{(\ell)}(x)  dx \\
%\leq    2R  \int_0^{+\infty} \frac{N(y)e^{\mu y}}{y^{\ell}} dy \times \int_0^1 h^{(\ell)}(u)du + \alpha %\sum_{k=2}^{\ell+1}{\ell  +1 \choose k}\mu^k \int_0^{+\infty} e^{\mu x}N^{(\ell+1-k)}(x)dx.\label{etape11}
%\end{multline}
By the induction assumption, the first term in the right hand side is finite. Because $h^{(\ell)}$ is a continuous function on $[0,1]$, $\|h^{(\ell)}\|_1$ is finite. Using that $N(x)e^{\mu x}\in \L^1(\R_+,\R_+)$ implies that the second term is integrable at $+\infty$. Point (i) of Proposition \ref{lem:regM} ensures the integrability at 0. Thus, the right hand side of \eqref{etape12} is finite. The use of point (i) of Proposition \ref{lem:regM} explains why $[\beta]\wedge (\nu_0+3)$ appears in the announced result.\\
The finiteness of $\int_0^{+\infty}e^{\mu x} \big|N^{(\ell)} (x)\big|dx$, for $\ell\leq [\beta]\wedge (\nu_0+3)$, is thus proved by recursion, implying the finiteness of the terms in \eqref{etape10} and concluding the proof.

\end{proof}

\begin{proof}[Proof of Proposition \ref{lem:regM}(iii)]The proof is divided into several steps.\\

\noindent \textit{Step 1:} First, notice that
$$M^*\ :\ \xi=\xi_1+\ic \xi_2 \mapsto \int_{-\infty}^{+\infty} e^{\ic x\xi}M(x)dx=\int_0^{+\infty} e^{\ic \xi \log(y)} N(y)dy =\int_0^{+\infty} e^{\ic \xi_1 \log(y)} y^{-\xi_2} N(y)dy.$$
We first prove that $M^*$ has isolated zeros in $\{\xi\in \mathbb{C} : \Im(\xi)<1\}$.\\
Because $N$ is such that $e^{\mu x}N(x)\in \L^\infty(\R_+,\R_+) \cap \L^1(\R_+,\R_+)$ for $\mu<R/\alpha$ (see \cite[p.95]{Perthame07}), $M^*$ is well defined on the half plane $\{\xi\in \mathbb{C} : \Im(\xi)\leq 0\}$. By Proposition \ref{lem:regM}(i), $N(y)/y$ is integrable on the neighborhood of zero. Hence, for $\xi_2<1$, so is $y^{-\xi_2}N(y)$. As a consequence, $M^*$ is analytic on the lower half-plane $\{\xi\in \mathbb{C} : \Im(\xi)<1\}$ which contains the real axis. The derivative of the integrand with respect to $\xi$ has modulus $|\log(y)|\, y^{-\xi_2} N(y)$. The latter function is upper bounded, when $\xi_2<1$ and when $y<1$, by $\frac{N(y)}{y}$ which is integrable on the neighborhood of zero by Proposition \ref{lem:regM}(i). It follows from the results on integrals with parameters that the extension of $M^*$ to the complex plane is holomorphic on $\{\xi\in \mathbb{C} : \Im(\xi)<1\}$. Because $M^*$ is not the null function, its zeros in $\{\xi\in \mathbb{C} : \Im(\xi)<1\}$ have to be isolated.\\

\noindent \textit{Step 2:} Let us now show that $M^*$ admits no zero on the real line. First we link the Fourier transform to the Mellin transform of $N$. Recall that $M^*$ is related to the solution $N$ of \eqref{eq:eigenvalue} where the right hand term is a multiplicative convolution of $x\mapsto N(x)/x$ and $h$. A natural way of treating multiplicative deconvolution is by using Mellin transform (e.g. \cite{Epstein,Titchmarsh}).  \\

It is natural to set
\begin{equation}\label{def:psi}\psi(x)=\frac{N(x)}{x},\quad x\geq 0.
\end{equation}
The Mellin transforms of $\psi$ and $h$ are defined for $s\in \mathbb{C}$ as:
\begin{equation}\label{eq:mellin}
\Psi(s)=\int_0^{+\infty} x^{s-1} \psi(x)dx,\qquad \mbox{ and }\qquad H(s)=\int_0^1 x^{s-1}h(x)dx.
\end{equation}
Notice that for $\xi\in\R$,
\begin{align}\label{eq:relMstarPsi}
M^*(\xi)&:=\int_{-\infty}^{\infty}M(x)e^{ix\xi}dx=\int_{-\infty}^{\infty}e^xN(e^x)e^{ix\xi}dx
=\int_0^{+\infty}N(u)u^{i\xi}du=\Psi(i\xi+2).
\end{align}
So the Point (iii) of Proposition \ref{lem:regM} will be proved if $\Psi$ does not vanish on the line $2+i\R$. We first establish an equation satisfied by $\Psi$ (Step 3) and then conclude with the proof by adapting the results of Doumic et al. \cite{Doumic17}.\\

Recall \eqref{momentexpN}. Let us prove below that for any $s\in\mathbb C$ such that
\begin{equation}\label{Nlim}
\lim_{x\to 0}x^{s-1}N(x)=0, %\quad \lim_{x\to +\infty}x^{s-1}N(x)=0,
\end{equation}the function $\Psi$ satisfies:
\begin{equation}\label{eq:Psi}(1-s)\Psi(s)=\frac{2R}{\alpha}\big(H(s)-1\big)\Psi(s+1).
\end{equation}
Notice firstly that \eqref{eq:Psi} is in particular true for all $s$ such that $\Re(s)>1$ since \eqref{Nlim} is then satisfied. Secondly, let us add that \eqref{eq:Psi} is reminiscent of Equation (2.4) in Doumic et al. \cite{Doumic17}, with the difference that our equation is establish on the eigenvalue problem \eqref{eq:eigenvalue} with constant branching rate $R$, while Doumic and co-authors work with an evolution equation with no evolution of the cell sizes but power-law branching rate.\\
%This corresponds to Equation (2.4) of Doumic et al. (2017) with 2 instead of 1 in the left hand side.

To prove \eqref{eq:Psi}, observe that $$N(x)=x\psi(x)\quad\mbox{ and }\quad   \pt_x N(x) =x\psi'(x)+\psi(x).$$
Therefore,  \eqref{eq:eigenvalue} can be expressed by using $\psi$ instead of $N$:
\begin{align*}
&\alpha \pt_x N(x) + 2R \, N(x) = 2R\int_0^\vc N(y)h\left(\dfrac xy\right) \dfrac{dy}{y} ,\quad x\ge 0 \\
%\iff&\alpha\Big(x\psi'(x)+\psi(x)\Big)+Rx\psi(x)=2R\int_0^{+\infty}y\psi(y)h\left(\dfrac xy\right) \dfrac{dy}{y} ,\quad x\ge 0 \\
\iff&x\psi'(x)+\Big(1+\frac{2R}{\alpha}x\Big)\psi(x)=\frac{2R}{\alpha}\int_x^{+\infty}y\psi(y)h\left(\dfrac xy\right) \dfrac{dy}{y} ,\quad x\ge 0.
\end{align*}
%This exactly corresponds to Equation (1.11) of Doumic et al. (2017) with
%$$\gamma(Doumic)=1,\quad \alpha(Doumic)=\frac{2R}{\alpha},\quad 2(Doumic)=1.$$
%Taking $\rho(Doumic)=1$ gives Equation (1.12) of Doumic et al. (2017).
%To solve this equation, we follow the approach of Doumic et al. \cite{Doumic17}. Because the right hand side
%Defining $G$ as (2.3) of Doumic et al. (2017), namely
%$$G(s)=\int_0^{+\infty}x^{s-1}g(x)dx, \quad s\in{\mathbb C},$$
%we have for any $s\in\mathbb C$,
%$$G(s+2)=\int_0^{+\infty}x^{s+1}g(x)dx=\int_0^{+\infty}x^sN(x)dx.$$
Multiplying each side of this equation by $x^{s-1}$ and integrating on $x\in [0,+\infty)$, we obtain:
\begin{align*}
& \int_0^{+\infty} x^s \psi'(x)dx+ \int_0^{+\infty} \Big(1+\frac{2R}{\alpha}x\Big)x^{s-1}\psi(x)dx=\frac{2R}{\alpha}\int_0^{+\infty} x^{s-1}\int_x^{+\infty}y\psi(y)h\left(\dfrac xy\right) \dfrac{dy}{y}\ dx\\
\iff & (1-s) \Psi(s)=\frac{2R}{\alpha}\big(H(s)-1\big)\Psi(s+1),
\end{align*}
by using an integration by parts for the left term and the Fubini theorem for the right term. This shows \eqref{eq:Psi}.\\

%From Equation \eqref{eq:eigenvalue}, we have:
%$$\alpha\int_0^{+\infty} \pt_x N(x)x^{s-1}dx+2R\int_0^{+\infty}  N(x)x^{s-1}dx=2R\int_0^{+\infty}x^{s-1}dx\int_0^\vc N(y)h\left(\dfrac xy\right) \dfrac{dy}{y}.$$
%But under \eqref{Nlim},
%$$\int_0^{+\infty} \pt_x N(x)x^{s-1}dx=-(s-1)\int_0^{+\infty} N(x)x^{s-2}dx.$$
%Therefore,
%$$-\alpha(s-1)\int_0^{+\infty} N(x)x^{s-2}dx+2R\int_0^{+\infty}  N(x)x^{s-1}dx=2R\int_0^{+\infty}N(y)y^{s-1}dy\int_0^\vc %\left(\dfrac xy\right)^{s-1} h\left(\dfrac xy\right)\dfrac{dx}{y} ,$$
%which means
%$$-\alpha(s-1)G(s)+2RG(s+1)=2RG(s+1)G_h(s).$$
%This ends the proof of the lemma.

\noindent \textit{Step 3:} We now prove that for any $\xi\in \R$, and any $n\in \N$,
\begin{equation}\label{eq:relationMstar-Psi}
M^*(\xi)\not=0 \ \iff \ \Psi(i\xi+n+2)\not=0.
\end{equation}This is true for $n=0$, by \eqref{eq:relMstarPsi}. For $\xi\in\R$, applying \eqref{eq:Psi} with $s=2+i\xi$, for which \eqref{Nlim} is satisfied:
$$M^*(\xi)=\Psi(i\xi+2)=-\frac{2R}{\alpha(1+i\xi)}\big(H(i\xi+2)-1\big)\Psi(i\xi+3).$$
By induction, we have for any $n\in{\mathbb N}$ ($n$ will be chosen subsequently),
$$M^*(\xi)=\left(-\frac{2R}{\alpha}\right)^n\prod_{k=1}^n\frac{1}{k+i\xi}\Big(H(i\xi+k+1)-1\Big)\times \Psi(i\xi+n+2).$$
Since for any $\xi=\xi_1+i\xi_2 \in \mathbb{C}$ such that $\xi_1\geq 1$,
\begin{equation}\label{eq:H}
|H(i\xi_2+\xi_1+1)|=\Big|\int_0^1x^{i\xi_2+\xi_1}h(x)dx\Big|\leq \int_0^1xh(x)dx=\frac{1}{2},
\end{equation}by symmetry of $h$, then the product in the right hand side is non zero and this leads to \eqref{eq:relationMstar-Psi}.\\

As a consequence, to prove that $M^*$ admits no zero on the real line, it is sufficient to prove that $\Psi$ admits no zero on some line $n_0+i\R$ where $n_0$ is an integer larger than 2.\\

\noindent \textit{Step 4:} Following the computation in Doumic et al. \cite{Doumic17}, we can prove that:
\begin{equation}\label{eq:Doumic}
\forall s\in \mathbb{C} \mbox{ with }\Re(s)\geq 2,\   \ |\Psi(s)|\not=0.
\end{equation}
First, notice that \eqref{eq:H} implies that $H(s)-1\not= 0$ as soon as $\Re(s)\geq 2$. For such $s\in \mathbb{C}$, dividing by $H(s)-1$, we can reformulate \eqref{eq:Psi} as:
\begin{equation}\label{eq:Psi2}
\Psi(s+1)= \Phi(s)\Psi(s),\quad \mbox{ where }\quad \Phi(s)=\frac{\alpha}{2R}\times \frac{1-s}{H(s)-1}.
\end{equation}
Notice that our equation has much more regularities than the one studied in Doumic et al. since the application $\Phi$ is analytic on $\{s\in \mathbb{C},\ \Re(s)\geq 2\}$ and does not vanish on this half-plane. \\

Let us fix $s_0\in \R$ such that $s_0\geq 2$. For $s\in \mathbb{C}$ such that $s_0<\Re(s)<s_0+1$, we look for particular solutions of \eqref{eq:Psi2} of the form
\[\Psi(s)=\exp\big(P(\zeta(s))\big)\quad \mbox{ where }\quad \zeta(s)=e^{i 2\pi (s-s_0)}.\]
Then, following the steps of Doumic et al. \cite[Proposition 2]{Doumic17}, the function $P$ solves a Carleman equation, from which a solution of \eqref{eq:Psi2} can be obtain. There exist several solutions to \eqref{eq:Psi2}, and computing the inverse Mellin transforms of the latters, it appears that the solution corresponding to the Mellin transform of the solution of \eqref{eq:eigenvalue} is:
\begin{equation}\label{sol:Psi}
\Psi(s)=\exp\Big(-\int_{\Re(\sigma)=s_0} \log\big(\Phi(\sigma) \big)\Big[\frac{1}{1-e^{2i\pi(s-\sigma)}}-\frac{1}{1+e^{2i\pi(s_0-\sigma)}}\Big] d\sigma \Big),\end{equation}
for $s\in \{\Re(s)\in(s_0,s_0+1)\}$, where the chosen determination of the logarithm is $\log(z)=\log|z|+i \arg(z)$ with $\arg(z)\in [0,2\pi)$. That the right hand side of \eqref{sol:Psi} is well defined and does not vanish on $\{\Re(s)\in(s_0,s_0+1)\}$ follows closely the proofs in \cite[Lemma 3 and Section 6.3]{Doumic17} by careful study of the behavior of $\Phi(s)$ when the imaginary part of $s$ tend to $\pm\infty$.\\

\noindent \textit{Conclusion:} choose for example $s_0=\frac{5}{2}$ so that $s_0\geq 2$ and $n_0=3\in (s_0,s_0+1)$. The function given in \eqref{sol:Psi} is analytic, non-vanishing and coincide with the Mellin transform of $\psi$ on $\{s\in \mathbb{C}\ : \ \Re(s)\in (s_0,s_0+1)\}$. This implies that $\Psi$ admits no zero on $n_0+i\R$ and finishes the proof.
%So, we apply Theorem 2 (i) of Doumic et al. (2017) that claims that there exists $s_0>2$ such that $G(s)\not=0$ for any $s$ such that ${\mathcal Re}(s)\in [s_0,s_0+1]$. We take $n$ such that $n+2\in [s_0,s_0+1]$ to conclude. Note that $s_0>2$ implies $n\in{\mathbb N}^*$. \textcolor{red}{Observe that $g$ has to satisfy Condition (2.9) of Doumic et al. (2017). Ennuy\'e pour Hyp2 mais cela renvoie au probl\`eme du 2.}
\end{proof}

\section{Proof of Propositions  \ref{prop:htilde} and \ref{prop:h-hat}}\label{app:risk-h}
\begin{proof}[Proof of Proposition \ref{prop:htilde}]
We have
\begin{align}
\|\hat{g}_{n,\bw} - g\|^2_2 &= \int_{\R_-} \big(\hat{g}_{n,\bw}(u) - g(u) \big)^2du = \int_{\R_-} \Big(e^u\hat h_{n,\bw}(e^u) - e^uh(e^u) \Big)^2du \nonumber\\
&= \int_0^1 \big(\hat h_{n,\bw}(x) - h(x)\big)^2xdx.\label{risk:hatg}
\end{align}
 Since $g(u)=e^u h(e^u)=e^u h(1-e^u)$ by the symmetry of $h$, we can show that
\begin{align*}
\ch{\tilde{g}_{n,\bw} - g}^2 %&= \intn \Big(\tilde{g}_\bw(u) - g(u) \Big)^2du = \intn \Big(e^u\hat h_{n,\bw}(1-e^u) - e^uh(e^u) \Big)^2du \\
%&= \int_0^1 \big(\hat h_{n,\bw}(1-x) - h(x)\big)^2xdx = \int_0^1 \big(\hat h_{n,\bw}(x) - h(1-x)\big)^2(1-x)dx \\
&= \int_0^1 \big(\hat h_{n,\bw}(x) - h(x)\big)^2(1-x)dx.
\end{align*}
Thus,
\begin{align}
\kv\left[\ch{\check{g}_{n,\bw} - g}^2 \right] &= \kv\left[\ch{\tau\hat{g}_{n,\bw} + (1-\tau)\tilde{g}_{n,\bw} - g}^2 \right]\nonumber\\
%= \kv\left[\ch{\tau\big(\hat{g}_{n,\bw} - g\big) + (1-\tau)\big(\tilde{g} - g\big)}^2 \right] \\
%&= \kv[\tau^2]\kv[\ch{\hat{g}_{n,\bw} - g}^2] + \kv[(1-\tau)^2]\kv[\ch{\tilde{g}_\bw - g}^2] \\
&= \frac 12\kv\big[\ch{\hat{g}_{n,\bw} - g}^2\big] + \frac 12\kv\big[\ch{\tilde{g}_{n,\bw} - g}^2\big] = \frac 12\kv\big[\|\hat{h}_{n,\bw} - h\|^2_2\big],\label{etape5}
\end{align}
since $\ch{\hat{g}_{n,\bw} - g}^2 + \ch{\tilde{g}_{n,\bw} - g}^2 = \|\hat{h}_{n,\bw} - h\|^2_2.$
Let us now compute $\kv\big[\ch{\tilde{g}_{n,\bw} - g}^2\big]$. Recall that $h=0$ on $\R\setminus (0,1)$, so  $g=0$ on $\R_+$. For $u<0$, we define the new variable $v\in \R_-^*$ such that $e^v=1-e^u$. We have
\begin{align*}
\tilde{g}_{n,\bw}(u)= & e^u \hat{h}_{n,\bw}(1-e^u)=e^u \hat{h}_{n,\bw}(e^v)= e^{u-v}\hat{g}_{n,\bw}(v)=\frac{e^u}{1-e^u}\hat{g}_{n,\bw}\big(\log(1-e^u)\big).
\end{align*}Similarly, we have that $g(u)=\frac{e^u}{1-e^u} g(\log(1-e^u))$ and thus
\begin{align*}
\kv\big[\ch{\tilde{g}_{n,\bw} - g}^2\big]= & \E\Big[ \int_{\R_-}\big(\tilde{g}_{n,\bw}(u)-g(u)\big)^2 du\Big] \\
= & \E\Big[\int_{\R_-} \Big(\frac{e^u}{1-e^u}\Big)^2 \Big(\hat{g}_{n,\bw}\big(\log(1-e^u)\big)-g\big(\log(1-e^u)\big)\Big)^2 du\Big]\\
= & \E\Big[\int_{\R_-} \Big(\frac{1-e^v}{e^v}\Big)\big(\hat{g}_{n,\bw}(v)-g(v)\big)^2 dv\Big].
\end{align*}As a consequence, the middle term in \eqref{etape5} is
\begin{align*}
\frac 12\kv\big[\ch{\hat{g}_{n,\bw} - g}^2\big] + \frac 12\kv\big[\ch{\tilde{g}_{n,\bw} - g}^2\big]
= & \E\Big[ \int_{\R_-} \frac{1}{2}\Big(1+ \frac{1-e^v}{e^v}\Big) \big(\hat{g}_{n,\bw}(v)-g(v)\big)^2 dv \Big]\\
=  & \E\Big[\int_{\R_-} \frac{e^{-v}}{2} \big(\hat{g}_{n,\bw}(v)-g(v)\big)^2 dv\Big].
\end{align*}
This concludes the proof.

\end{proof}

%\newpage
\begin{proof}[Proof of Proposition \ref{prop:h-hat}]
Remember \eqref{risk:hatg}. Then, since $h(x)=h(1-x)$,
\begin{multline*}
\int_0^1 \big(\hat h_{n,\bw}^{sym}(x) - h(x)\big)^2m(x)dx\\
\begin{aligned}
= & \frac{1}{4}\int_0^1\big(\hat h_{n,\bw}(x) - h(x)+\hat h_{n,\bw}(1-x) - h(1-x)\big)^2m(x)dx\\
\leq&\frac{1}{2}\int_0^1\big(\hat h_{n,\bw}(x) - h(x)\big)^2m(x)dx+\frac{1}{2}\int_0^1\big(\hat h_{n,\bw}(1-x) - h(1-x)\big)^2m(1-x)dx\\
= &\int_0^1\big(\hat h_{n,\bw}(x) - h(x)\big)^2m(x)dx\\
\leq&\int_0^1\big(\hat h_{n,\bw}(x) - h(x)\big)^2xdx = \ch{\hat{g}_{n,\bw} - g}^2.
\end{aligned}
\end{multline*}This concludes the proof.

\end{proof}
\section{Proof of Theorem \ref{prop:rate-of-convergence}}\label{app:main-theorems}

\begin{proofof}{Theorem  \ref{prop:rate-of-convergence}}
Let $g_\ell=K_\ell\star g$. We have
\begin{eqnarray*}
\| \hat g_{n,\ell}- g  \|_2 \leq  \|  g_\ell- g  \|_2 + \| \hat g_{n,\ell}- g_\ell  \|_2.
\end{eqnarray*}
The first term of the above r.h.s  inequality is a bias term whereas the second is a variance term.
 To control the variance term, we have by the Parseval's identity and by \eqref{eq:estigell}:
\begin{small}
\begin{align*}
 &\| \hat g_{n,\ell}- g_\ell  \|^2_2 =  \frac{1}{2\pi} \| \hat{ g}_{n,\ell}^*- g_\ell^*  \|^2_2 \\
 &= \frac{1}{2\pi} \intn \Bigg | K_\ell^*(\xi) \Big [\Big (  \frac {\alpha \widehat{D_n^*(\xi)}}{2R }\frac{\id_{\Omega_n(\xi)}}{\widehat{M_n^*(\xi)}} +  1  \Big ) -g^*(\xi)     \Big ]  \Bigg|^2 d\xi \\
 &=  \frac{1}{2\pi} \intn \Bigg | K_\ell^*(\xi) \Big [\Big (  \frac {\alpha \widehat{D_n^*(\xi)}}{2R }\frac{\id_{\Omega_n(\xi)}}{\widehat{M_n^*(\xi)}} - \frac {\alpha \widehat{D_n^*(\xi)}}{2R {M^*(\xi)}} + \frac {\alpha \widehat{D_n^*(\xi)}}{2R {M^*(\xi)}}   +  1  \Big ) -g^*(\xi)     \Big ]  \Bigg|^2 d\xi \\
 &=  \frac{1}{2\pi} \intn  \Bigg | \frac{\alpha}{2R}K_\ell^*(\xi)  \widehat{D_n^*(\xi)} \Big (\frac{\id_{\Omega_n(\xi)}}{\widehat{M_n^*(\xi)}}-\frac{1}{{M^*(\xi)}} \Big )   +  K_\ell^*(\xi)\Big  ( \frac {\alpha \widehat{D_n^*(\xi)}}{2R {M^*(\xi)}}   + 1  -g^*(\xi) \Big )  \Bigg |^2 d\xi\\
 &\leq C \intn  \Bigg | K_\ell^*(\xi)  \widehat{D_n^*(\xi)} \Big (\frac{\id_{\Omega_n(\xi)}}{\widehat{M_n^*(\xi)}}-\frac{1}{{M^*(\xi)}} \Big )   \Bigg |^2 d\xi + C \intn |K_\ell^*(\xi)^2| \Bigg   | \frac {\alpha \widehat{D_n^*(\xi)}}{2R {M^*(\xi)}}   +  1  -g^*(\xi)   \Bigg |^2 d\xi \\
 &:= \text{I} + \text{II }.
\end{align*}
\end{small}
We deal with variance of complex variables. Note that for a complex variable, say $Z$, by distinguishing real and imaginary parts one gets that $$\ps(Z) := \kv [|Z-\kv(Z)|^2] = \kv [ |Z|^2 ] -|\kv [Z]|^2 \leq  \kv [ |Z|^2]. $$

\noindent For  the term II, because
\begin{eqnarray*}
\kv \left (  K_\ell^*(\xi)\left  ( \frac {\alpha \widehat{D_n^*(\xi)}}{2R {M^*(\xi)}}   +  1 \right ) \right ) &=&   K_\ell^*(\xi)\left  ( \frac {\alpha {D^*(\xi)}}{2R {M^*(\xi)}}   +  1  \right )  \\
&=&  K_\ell^*(\xi) g^*(\xi),
\end{eqnarray*}
we have
\begin{eqnarray*}
 \kv[\text{II}]&=&  C\intn \ps \left( K_\ell^*(\xi)    \left (\frac {\alpha \widehat{D_n^*(\xi)}}{2R {M^*(\xi)}}   +  1   \right)  \right)d\xi \\
 &\leq & C   \intn \ps \left( K_\ell^*(\xi)    \frac {\widehat{D_n^*(\xi)}}{{M^*(\xi)}}       \right)d\xi \\
 &\leq & C\intn \left|\frac{K_\bw^*(\xi)}{M^*(\xi)} \right|^2\ps\left(\frac{(-\ic\xi)}{n}\sum_{j=1}^n e^{(\ic\xi-1)U_j} \right)d\xi \\
 &\leq & \frac C n  \intn \left|\frac{K_\bw^*(\xi)\xi}{M^*(\xi)} \right|^2\ps\left(e^{(\ic\xi-1)U_1} \right)d\xi \\
 &\leq &  \frac C n  \intn \left|\frac{K_\bw^*(\xi)\xi}{M^*(\xi)} \right|^2\kv\left[\left|e^{(\ic\xi-1)U_1} \right|^2 \right]d\xi \leq \frac C n  \intn \left|\frac{K_\bw^*(\xi)\xi}{M^*(\xi)} \right|^2\kv\left[e^{-2U_1}\right]d\xi  \\
 &\leq &  \frac C n  \left  \|  \frac{ K_\ell^*(\xi) \xi  }{M^*(\xi) } \right \|^2_2 ,
\end{eqnarray*}
since $\kv[e^{-2U_1}]=\int_0^{+\infty}x^{-2}N(x)dx<+\vc$ thanks to Proposition \ref{lem:regM}.

\noindent We now set
\begin{equation}
\triangle(\xi) := \frac{\id_{\Omega_n(\xi)}}{\widehat{M_n^*(\xi)}} - \frac{1}{M^*(\xi)}.\label{eq:trianglexi}
\end{equation}

\noindent Then we get
\begin{align*}
\kv[\,\text{I}\,] &\leq  C\intn \kv\left[\left|K_\bw^*(\xi)\widehat{D_n^*(\xi)}\triangle(\xi)\right|^2 \right]d\xi \leq  C\intn \big|K_\bw^*(\xi)\big|^2\kv\left[\big|\widehat{D_n^*(\xi)} \big|^2\big|\triangle(\xi)\big|^2 \right]d\xi \\
&\le C\intn \big|K_\bw^*(\xi)\big|^2\kv\left[\big|\widehat{D_n^*(\xi)} - \kv\big[\widehat{D_n^*(\xi)} \big] \big|^2 \big|\triangle(\xi)\big|^2\right]d\xi  \\
&\hspace{3cm} + C\intn \big|K_\bw^*(\xi)\big|^2\Big|\kv\big[\widehat{D_n^*(\xi)} \big]\Big|^2\kv\big[|\triangle(\xi)|^2\big]d\xi \\
&:= \text{III} + \text{IV}.
\end{align*}
To control the term IV, we need the following lemma whose proof is postponed in Appendix \ref{app:technicallemmas}.

\begin{lemma}\label{lem:control-delta}
There exists a positive constant $C_p$ such that
\begin{equation}\label{eq:control-delta}
\kv\left[|\triangle(\xi)|^{2p} \right] \le C_p\min\left\{\frac{1}{|M^*(\xi)|^{2p}},\frac{n^{-p}}{|M^*(\xi)|^{4p}} \right\}\quad\text{ for } p = 1,2.
\end{equation}
\end{lemma}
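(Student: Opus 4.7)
The plan is to prove both estimates inside the minimum by a careful decomposition of the sample space. The workhorse probabilistic input is that, because each $e^{\ic \xi U_j}$ is bounded in modulus by $1$, the Rosenthal (Marcinkiewicz--Zygmund) inequality applied to the centered bounded variables $e^{\ic \xi U_j} - M^*(\xi)$ gives
\[
\kv\big[|\widehat{M^*(\xi)} - M^*(\xi)|^q\big] \leq C_q\, n^{-q/2}, \qquad q \geq 2.
\]
Introduce the event $B := \{|\widehat{M^*(\xi)} - M^*(\xi)| \leq |M^*(\xi)|/2\}$, on which $|\widehat{M^*(\xi)}| \geq |M^*(\xi)|/2$; Markov's inequality at order $4p$ then yields $\P(B^c) \leq C_p\, n^{-2p}/|M^*(\xi)|^{4p}$.

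I would then separate two regimes according to the size of $|M^*(\xi)|$. When $|M^*(\xi)| \leq 2 n^{-1/2}$, the truncation forces $1/|\widehat{M^*(\xi)}| \leq n^{1/2} \leq 2/|M^*(\xi)|$ on $\Omega$, while on $\Omega^c$ one has $\triangle(\xi) = -1/M^*(\xi)$; hence the deterministic bound $|\triangle(\xi)| \leq 3/|M^*(\xi)|$ holds everywhere. This gives the first estimate directly, and the second follows from $|M^*(\xi)|^{2p} \leq 4^p\, n^{-p}$ in this regime. When $|M^*(\xi)| > 2 n^{-1/2}$, decompose
\[
\kv\big[|\triangle(\xi)|^{2p}\big] = \kv\big[|\triangle|^{2p}\id_{\Omega \cap B}\big] + \kv\big[|\triangle|^{2p}\id_{\Omega \cap B^c}\big] + \kv\big[|\triangle|^{2p}\id_{\Omega^c}\big].
\]
On $\Omega \cap B$, the identity $\triangle(\xi) = (M^*(\xi) - \widehat{M^*(\xi)})/(M^*(\xi)\widehat{M^*(\xi)})$ together with $|\widehat{M^*(\xi)}|\geq |M^*(\xi)|/2$ and the Rosenthal bound at order $2p$ yields a contribution $\leq C_p\, n^{-p}/|M^*(\xi)|^{4p}$. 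On $\Omega \cap B^c$, combine the crude estimate $|\triangle(\xi)| \leq n^{1/2} + 1/|M^*(\xi)| \leq 3n^{1/2}/2$ with the bound on $\P(B^c)$ to obtain the same order $C_p\, n^{-p}/|M^*(\xi)|^{4p}$. On $\Omega^c$, the regime assumption forces $\Omega^c \subset B^c$, so the contribution is $\leq \P(B^c)/|M^*(\xi)|^{2p} \leq C_p\, n^{-2p}/|M^*(\xi)|^{6p}$, which is again absorbed by $C_p\, n^{-p}/|M^*(\xi)|^{4p}$ since $n^{-p}/|M^*(\xi)|^{2p}\leq 4^{-p}$. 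Summing yields the second estimate, and the first follows automatically in this regime from the same comparison.

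The main obstacle is the piece on $\Omega \cap B^c$: there only the weak pointwise bound $|\triangle(\xi)|^{2p} = O(n^p)$ coming from the truncation at level $n^{-1/2}$ is available, so the target order $n^{-p}/|M^*(\xi)|^{4p}$ rests entirely on the smallness $\P(B^c) = O(n^{-2p}/|M^*(\xi)|^{4p})$, which is just tight enough to absorb the factor $n^p$. This is precisely why one needs control of the $4p$-th moment of the centered empirical Fourier variable, and it also accounts for the restriction to $p\in\{1,2\}$, which corresponds to the maximal moment order required in the subsequent arguments.
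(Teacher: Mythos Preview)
Your proof is correct. The overall two-regime split according to $|M^*(\xi)|\lessgtr 2n^{-1/2}$ is the same as in the paper, but the internal arguments differ.

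In the small-$|M^*(\xi)|$ regime, the paper bounds the two pieces of the identity $\kv[|\triangle|^2]=\P(\Omega^c)/|M^*|^2+\kv[\id_\Omega|\widehat{M^*}-M^*|^2/(|\widehat{M^*}|^2|M^*|^2)]$ via the variance bound $\kv[|\widehat{M^*}-M^*|^2]\le n^{-1}$ and the truncation $|\widehat{M^*}|^{-2}\le n$; your deterministic pointwise bound $|\triangle(\xi)|\le 3/|M^*(\xi)|$ is a slicker shortcut here. In the large-$|M^*(\xi)|$ regime, the paper controls $\P(\Omega^c)$ by Bernstein's inequality and then handles the $\Omega$-piece through the algebraic self-referencing inequality $|\widehat{M^*}|^{-2}\le 2\big(|M^*|^{-2}+|\widehat{M^*}-M^*|^2|\widehat{M^*}|^{-2}|M^*|^{-2}\big)$, which together with Rosenthal at order~$4$ closes the bound. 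You replace both ingredients by the single auxiliary event $B=\{|\widehat{M^*}-M^*|\le|M^*|/2\}$, controlling $\P(B^c)$ by Markov at order $4p$ (via Rosenthal), and splitting $\Omega\cap B$, $\Omega\cap B^c$, $\Omega^c$; the key observation $\Omega^c\subset B^c$ in this regime is exactly the step the paper hides inside its use of Bernstein. Your route is slightly more elementary in that it avoids Bernstein altogether and uses only moment bounds; the paper's exponential tail from Bernstein is sharper but unnecessary for the stated conclusion. The restriction $p\in\{1,2\}$ is not intrinsic to either argument (both extend to any fixed $p\ge 1$); it simply reflects what is used downstream.
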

%\begin{lemma}\label{ratio}
%The ratio $\left | \frac{D^*({\xi})}{M^*(\xi)} \right |$ is bounded:
%$$\left | \frac{D^*({\xi})}{M^*(\xi)} \right | \leq \frac{4R}{\alpha}, \quad \forall \xi \in \rhoa.$$
%\end{lemma}
Since $\widehat{ D_n^*}$ is an unbiased estimator of $D^*$  using Lemma \ref{lem:control-delta} with $p=1$ we get
\[
\text{IV} \leq  C\intn \big|K_\bw^*(\xi)\big|^2\big| D^*(\xi)\big|^2\frac{n^{-1}}{|M^*(\xi)|^4}d\xi = \frac{C}{n}\intn \bigg|\frac{K_\bw^*(\xi)}{M^*(\xi)}\bigg|^2\bigg|\frac{D^*(\xi)}{M^*(\xi)}\bigg|^2d\xi.
\]
Moreover, from equation (\ref{eq:FT-g}) and using that $g^*$ is the Fourier transform of a density function, we get
\[
\left | \frac{D^*({\xi})}{M^*(\xi)} \right |\leq \frac{2R}{\alpha}\left (|g^*(\xi)| + 1\right) \leq \frac{4R}{\alpha}.
\]
Thus we obtain
\begin{align*}
\text{IV} & \leq \frac{C}{n} \intn \bigg|\frac{K_\bw^*(\xi)}{M^*(\xi)}\bigg|^2 d\xi = \frac{C}{n} \Big\|\frac{K_\bw^*(\xi)}{M^*(\xi)}\Big\|_2^2.
\end{align*}
For the term III, we have by applying Cauchy-Schwarz's inequality and by Lemma \ref{lem:control-delta}:
\begin{align*}
\text{III} &\le C\intn \big|K_\bw^*(\xi)\big|^2\left(\kv\Big[\big|\widehat{D_n^*(\xi)} - \kv\big[\widehat{D_n^*(\xi)} \big] \big|^4\Big] \right)^{1/2}\left(\kv\Big[\big|\triangle(\xi)\big|^4 \Big] \right)^{1/2}d\xi \\
&\le C\intn \big|K_\bw^*(\xi)\xi\big|^2\left(\kv\Big[\big|\frac{1}{n}\sum_{j=1}^ne^{(\ic\xi-1)U_j} - \kv\big[e^{(\ic\xi-1)U_1} \big]\big|^4 \Big] \right)^{1/2}\\
&\hspace{5.3cm}\times \min\left\{\frac{1}{|M^*(\xi)|^{4}},\frac{n^{-2}}{|M^*(\xi)|^{8}} \right\}^{1/2}d\xi \\
&\le C\intn \frac{\big|K_\bw^*(\xi)\xi\big|^2}{|M^*(\xi)|^2}\left(\kv\left[\Big|\frac 1n\sum_{j=1}^n Z_j(\xi) \Big|^4 \right] \right)^{1/2}d\xi,
\end{align*}
where $Z_j(\xi) = e^{(\ic\xi-1)U_j} - \kv\big[e^{(\ic\xi-1)U_1} \big]$. Since $Z_1(\xi), \ldots, Z_n(\xi)$ are independent centered variables with

\begin{align}
\kv[|Z_1(\xi)|^4] &= \kv\left[\Big|e^{(\ic\xi-1)U_1} - \kv\big[e^{(\ic\xi-1)U_1} \big] \Big|^4 \right]
\le \kv\left[\left( \big|e^{(\ic\xi-1)U_1}\big| + \big| \kv\big[e^{(\ic\xi-1)U_1} \big] \big|\right)^4 \right] \notag \\
&\le 2^3 \left( \kv\big[\big|e^{(\ic\xi-1)U_1}\big|^4\big] + \big|\kv\big[ e^{(\ic\xi-1)U_1} \big]\big|^4 \right) \notag \\
&\le 16 \kv\big[e^{-4U_1}\big] = 16 \int_0^{+\infty}x^{-4}N(x)dx < +\vc \label{eq:bound-Zj}
\end{align}
by Proposition \ref{lem:regM}, applying Rosenthal inequality to real and imaginary parts of complex variables $Z_j$'s, we get
\[
\kv\left[\Big|\frac{1}{n}\sum_{j=1}^n Z_j(\xi) \Big|^4 \right] \le C n^{-4}\left(n\kv[|Z_1(\xi)|^4] + \big(n\kv[|Z_1(\xi)|^2]\big)^2 \right) \le Cn^{-2}.
\]
Hence
\begin{align*}
\text{III} &\le \frac{C}{n}\intn \frac{\big|K_\bw^*(\xi)\xi\big|^2}{|M^*(\xi)|^2}d\xi = \frac{C}{n}\Big\|\frac{K^*_\bw(\xi)\xi}{M^*(\xi)}\Big\|^2.
\end{align*}
Finally, we obtain
\[
\kv\big[\ch{\hat{g}_{n,\bw} - g}^2 \big]\le  \ch{K_\bw\star g - g}^2 +  \frac{C}{n}\left(\Big\| \frac{K^*_\bw(\xi)\xi}{M^*(\xi)}\Big\|_2^2 + \Big\| \frac{K^*_\bw(\xi)}{M^*(\xi)}\Big\|_2^2 \right).
\]
This ends the proof of Theorem \ref{prop:rate-of-convergence}.

\end{proofof}

%\blue{To be removed??
%\begin{proofof}{Corollary \ref{TheoremConsistance}}
%Using \eqref{eq:rate-of-convergence}, due to well-known results on kernel density, the bias term  converges to $0$:
%\[
% \lim \limits_{n \to +\infty}  \ch{K_\bw\star g - g}^2 = 0,
%\]
%% and since
%%%\[
%%\Big\|\frac{K^*_\bw(\xi)}{M^*(\xi)}\Big\|_2^2   = \mathcal{O}\left( \Big\|\frac{K^*_\bw(\xi)\xi}{M^*(\xi)}\Big\|_2^2   \right)
%%\]
% and under the assumptions of the \sout{theorem} corollary we have for the variance term
%\[
% \lim \limits_{n \to +\infty} \frac{1}{n}\left(\Big\| \frac{K^*_\bw(\xi)\xi}{M^*(\xi)}\Big\|_2^2 + \Big\| \frac{K^*_\bw(\xi)}{M^*(\xi)}\Big\|_2^2 \right) = 0,
%\]
%which completes the proof of \sout{Theorem} Corollary \ref{TheoremConsistance}.
%\end{proofof}
%}

\begin{proofof}{Proposition \ref{vitesse:g}}
First let us find an upper bound for the bias term $ \ch{K_\bw\star g - g}^2$. We assume that $g^* $ is integrable and $g$ belongs to the Sobolev class $S(\beta,L)$. %It is  standard to consider the sinc kernel $K(x)= \frac{\sin( x)}{ \pi x }$ (see Comte and Lacour \cite{ComteLacour}). 
%Then we have
%$$K_\bw^*(\xi)=K^*(\bw\xi)=\mathds{1}_{[-\bw^{-1},\bw^{-1}]}(\xi),$$
Proposition 1 of Comte and Lacour \cite{ComteLacour} yields
$$
 \ch{K_\bw\star g - g}^2 \leq L^2 \ell^{2\beta}.
$$
Now we shall consider the variance term. We have
$$
 \Big\| \frac{K^*_\bw(\xi)\xi}{M^*(\xi)}\Big\|_2^2=  \int_{-\bw^{-1}}^{\bw^{-1}}\frac{\xi^2}{|M^*(\xi)|^2}d\xi=O(\bw^{-(3+2([\beta]\wedge (\nu_0+3)))}).$$
Performing the usual trade-off between the bias and the variance terms, we get the following choice of the bandwith $\ell$, for a fixed $\alpha >0$ :
\[
\ell = \alpha n^{-\frac{1}{2\beta + 2([\beta]\wedge (\nu_0+3)) + 3}},
\]
which  concludes the proof.
\end{proofof}
%%%%%%%%%%%%%%%%%%%%%%%%%%%%%
%%%%%%%%%%%%%%%%%%%%%%%%%%%%%
\section{Proof of technical lemma}\label{app:technicallemmas}
\begin{proofof}{Lemma \ref{lem:control-delta}}
This proof is inspired by the proof of Neumann \cite{Neumann97}. We will prove the result with $p=1$. For $p=2$, the proof is similar.

We split the proof in two cases: $|M^*(\xi)|<2n^{-1/2}$ and $|M^*(\xi)|\ge 2n^{-1/2}$. Recall that $\Omega_n(\xi) = \left\{|\widehat{M_n^*(\xi)}|\ge n^{-1/2} \right\}$ and $\kv\big[\widehat{M_n^*(\xi)}\big] = \kv\big[e^{\ic\xi U_1} \big] = M^*(\xi)$, we have:
\begin{align}
\kv\left[|\triangle(\xi)|^2 \right] &= \kv\left[\Big|\frac{\id_{\Omega_n(\xi)}}{\widehat{M_n^*(\xi)}} - \frac{1}{M^*(\xi)]} \Big|^2\right] = \kv\left[\Bigg|\frac{\id_{\Omega_n(\xi)}}{\widehat{M_n^*(\xi)}} - \Bigg(\frac{\id_{\Omega_n(\xi)}}{M^*(\xi)} + \frac{\id_{\Omega_n^c(\xi)}}{M^*(\xi)}\Bigg)\Bigg|^2\right] \notag\\
&= \frac{\xs(\Omega_n^c(\xi))}{|M^*(\xi)|^2} + \kv\left[\id_{\Omega_n(\xi)}\frac{|\widehat{M_n^*(\xi)} - M^*(\xi)|^2}{|\widehat{M_n^*(\xi)}|^2|M^*(\xi)|^2} \right]. \label{eq:proof-lem1-eq1}
\end{align}

\noindent \textit{$i)$  If $|M^*(\xi)|<2n^{-1/2}$:}
\begin{align*}
\kv\left[|\triangle(\xi)|^2 \right] \le \frac{1}{|M^*(\xi)|^2} + \frac{\kv\left[|\widehat{M_n^*(\xi)} - M^*(\xi)|^2 \right]n}{|M^*(\xi)|^2}.
\end{align*}
But
\begin{align*}
\kv\left[\left|\widehat{M_n^*(\xi)} - M^*(\xi)\right|^2 \right] &= \ps\left[\widehat{M_n^*(\xi)} \right] = \ps\left[\frac{1}{n}\sum_{j=1}^n e^{\ic\xi U_j} \right] \\
&= \frac 1n \ps\left(e^{\ic\xi U_1} \right) \le \frac{1}{n}\kv\left[|e^{\ic\xi U_1}|^2 \right] = \frac 1n.
\end{align*}
Hence we obtain
\begin{equation}\label{eq:upper-bound-E_delta}
\kv\left[|\triangle(\xi)|^2 \right] \le \frac{C}{|M^*(\xi)|^2} \leq C\min\left\{\frac{1}{|M^*(\xi)|^{2}},\frac{n^{-1}}{|M^*(\xi)|^{4}} \right \},
\end{equation}
 since $|M^*(\xi)|<2n^{-1/2}$.\\
%Furthermore
%$$  \frac{1}{|M^*(\xi)|^2} \leq \frac{n^{-1}}{|M^*(\xi)|^4 }, $$$. Hence
%$$ \kv\left[|\triangle(\xi)|^2 \right]  \leq C\min\left\{\frac{1}{|M^*(\xi)|^{2}},\frac{n^{-1}}{|M^*(\xi)|^{4}} \right \}.  $$

\noindent \textit{$ii)$ If $|M^*(\xi)|\ge 2n^{-1/2}$:}\\[6pt]
We first control the probability $\xs(\Omega_n^c(\xi))$,
\begin{align}
\xs\left(\Omega_n^c(\xi) \right) &= \xs\left(|\widehat{M_n^*(\xi)}| < n^{-1/2}\right) =
\xs\left(|\widehat{M_n^*(\xi)}|< |M^*(\xi)| - |M^*(\xi)| + n^{-1/2}\right) \notag \\
&\le \xs\left(| \widehat{M_n^*(\xi)} - M^*(\xi)| > |M^*(\xi) | - n^{-1/2} \right)\notag \\
&\le  \xs\left(| \widehat{M_n^*(\xi)} - M^*(\xi)| > |M^*(\xi)|/2 \right). \label{eq:proof-lem1-eq2}
\end{align}
Let $T_j(\xi) = e^{\ic\xi U_j} - \kv\big[e^{\ic\xi U_1} \big]$, then
\[
\mhxi - \mxi = \frac{1}{n}\sum_{j=1}^n e^{\ic\xi U_j} - \kv\big[e^{\ic\xi U_1} \big] = \frac{1}{n}\sum_{j=1}^n T_j(\xi).
\]
We have
\[
|T_1(\xi)| = \big|e^{\ic\xi U_j} - \kv\big[e^{\ic\xi U_1} \big] \big| \le \big|e^{\ic\xi U_j} \big| + \big|\kv\big[e^{\ic\xi U_1} \big] \big| \le 2,
\]
and
\[
\ps\big(T_1(\xi)\big) \le \kv\big[|e^{\ic\xi U_1}|^2 \big] = 1.
\]
Since $|M^*(\xi)|\le 1$ for all $\xi\in\rhoa$ because of $M$ is a density function, we get by Bernstein inequality (cf. for instance Comte and Lacour \cite[Lemma 2, p.20]{ComteLacour})
\begin{align}
\xs\left(| \widehat{M_n^*(\xi)} - M^*(\xi)| > |M^*(\xi)|/2 \right) &\le 2\max\left\{\exp\Big(-\frac{n|\mxi|^2}{16} \Big), \exp\Big(-\frac{n|\mxi|}{16} \Big)  \right\} \notag \\
&\le 2\exp\Big(-\frac{n|\mxi|^2}{16} \Big) \notag \\
&\le C\frac{n^{-1}}{|\mxi|^2}.
\end{align}

\noindent We also have that
\begin{align}
\frac{1}{|\widehat{M_n^*(\xi)}|^2} &= \frac{|\mxi|^2}{|\mhxi|^2|\mxi|^2} = \frac{|\mhxi - \big(\mhxi - \mxi\big)|^2}{|\mhxi|^2|\mxi|^2} \notag\\
&\le 2\left\{\frac{1}{|\mxi|^2} + \frac{|\mhxi - \mxi|^2}{|\mhxi|^2|\mxi|^2}\right\}. \label{eq:proof-lem1-eq3}
\end{align}
Thus, from \eqref{eq:proof-lem1-eq1}, \eqref{eq:proof-lem1-eq2} and \eqref{eq:proof-lem1-eq3} we have:
\begin{align}
\kv\left[|\triangle(\xi)|^2 \right] &\le C\left\{\frac{n^{-1}}{|\mxi|^4} + \kv\left[\id_{\Omega_n(\xi)}\frac{|\mhxi - \mxi|^2}{|\mhxi|^2||\mxi|^2} \right]\right\} \notag \\
&\le C\left\{\frac{n^{-1}}{|\mxi|^4} + \frac{\kv\big[|\mhxi - \mxi|^2 \big]}{|\mxi|^4}  + \frac{\kv\big[|\mhxi - \mxi|^4 \big]n}{|\mxi|^4}\right\}.\label{eq:proof-lem1-eq5}
\end{align}
To find an upper bound for $\kv\big[|\mhxi - \mxi|^4 \big]$, recall that $T_j(\xi) = e^{\ic\xi U_j} - \kv\big[e^{\ic\xi U_1} \big]$. By similar calculations as obtained \eqref{eq:bound-Zj}, we have $\kv[|T_1(\xi)|^4] < +\vc$ as $|T_1(\xi)|\le 2$. Thus we get by Rosenthal's inequality applied to real and imaginary parts of the sequence of independent centered variables $T_1(\xi), \ldots, T_n(\xi)$:
\begin{align*}
\kv\big[|\mhxi - \mxi|^4 \big] &= \kv\left[\Big|\frac{1}{n}\sum_{j=1}^n T_j(\xi) \Big|^4 \right] \\
&\le C n^{-4}\left(n\kv[|T_1(\xi)|^4] + \big(n\kv[|T_1(\xi)|^2]\big)^2 \right) \le Cn^{-2}.
\end{align*}
Thus, from \eqref{eq:proof-lem1-eq2} and \eqref{eq:proof-lem1-eq5} we get
\[
\kv\left[|\triangle(\xi)|^2 \right] \le C\frac{n^{-1}}{|\mxi|^4}.
\]
Furthermore
$$  \frac{1}{|M^*(\xi)|^2} \geq \frac{n^{-1}}{|M^*(\xi)|^4 }, $$ since $|M^*(\xi)| > 2n^{-1/2}$. Hence
$$ \kv\left[|\triangle(\xi)|^2 \right]  \leq C\min\left\{\frac{1}{|M^*(\xi)|^{2}},\frac{n^{-1}}{|M^*(\xi)|^{4}} \right \}.  $$
Combining the two cases, we obtain
\[
\kv\left[|\triangle(\xi)|^2 \right] \le C\min\left\{\frac{1}{|M^*(\xi)|^{2}},\frac{n^{-1}}{|M^*(\xi)|^{4}} \right\}.
\]
This ends the proof of Lemma \ref{lem:control-delta}.

\end{proofof}

%\begin{proofof}{Lemma \ref{ratio}}
%From equation (\ref{eq:FT-g}) we have
%\[
%\left | \frac{D^*({\xi})}{M^*(\xi)} \right |\leq \frac{2R}{\alpha}\left (|g^*(\xi)| + 1\right).
%\]
%Using the change of variable $e^u=x$
%\begin{align*}
%|g^*(\xi)|&=\left  |\intn e^{\ic u \xi} g(u) du \right  |= \left | \intn e^{\ic u \xi} e^uh(e^u) du \right |
%= \left |\int_{0}^{\infty} e^{\ic \xi \log x } h(x) dx  \right | \\
%& \leq \int_{0}^{1}   h(x)  dx =1,
%\end{align*}
%thus
%\[
%\left | \frac{D^*({\xi})}{M^*(\xi)} \right | \leq \frac{4R}{\alpha},
%\]
%which completes the proof.
%
%\end{proofof}

\noindent\textbf{Acknowledgements:} The authors thank Sylvain Arlot, Thibault Bourgeron and Matthieu Lerasle for helpful discussions. Van H\`a Hoang and Viet Chi Tran have been supported by  the Chair ``Mod\'elisation Math\'ematique et Biodiversit\'e" of Veolia Environnement-Ecole Polytechnique-Museum National d'Histoire Naturelle-Fondation X, and also acknowledge support from Labex CEMPI (ANR-11-LABX-0007-01) and ANR project Cadence (ANR-16-CE32-0007). %The authors would like to thank the anonymous referees for valuable comments and suggestions.

{\footnotesize
\bibliographystyle{plain}	
%\bibliography{references} % References file

\begin{thebibliography}{10}

\bibitem{ackermann}
M. Ackermann, S.C. Stearns and U. Jenal,
\newblock Senescence in a Bacterium with Asymmetric Division.
\newblock {\em Science}, 300:1920, 2003.

\bibitem{aguilaniu}
H. Aguilaniu, L. Gustafsson, M. Rigoulet and T. Nystr\"om.
\newblock Asymmetric Inheritance of Oxidatively Damaged Proteins During
Cytokinesis.
\newblock {\em Science}, 299:1751, 2003.

\bibitem{Athreya70}
K.B. Athreya and P.E. Ney.
\newblock {\em Branching processes}, volume 196.
\newblock Springer Science \& Business Media, 2012.

\bibitem{Banks}
H.T. Banks, K.L. Sutton, W.C. Thompson, G. Bocharov, D. Roose, T. Schenkel, A. Meyerhans.
\newblock Estimation of cell proliferation dynamics using cfse data.
\newblock \textit{Bulletin of mathematical biology}, 73(1):116-150, 2011.

\bibitem{Bansaye08}
V. Bansaye.
\newblock Proliferating parasites in dividing cells: Kimmel's branching model
  revisited.
\newblock {\em The Annals of Applied Probability}, pages 967--996, 2008.

\bibitem{Bansaye11a}
V. Bansaye, J.-F. Delmas, L. Marsalle, and V.C. Tran.
\newblock Limit theorems for markov processes indexed by continuous time
  galton-watson trees.
\newblock {\em The Annals of Applied Probability}, 21(6):2263--2314, 2011.

\bibitem{BansayeMeleard}
V. Bansaye and S. M\'el\'eard.
\newblock {\em Some stochastic models for structured populations : scaling
  limits and long time behavior}.
\newblock arXiv:1506.04165, 2015.

\bibitem{Bansaye13}
V. Bansaye, J.C. Pardo Millan, C. Smadi, et~al.
\newblock On the extinction of continuous state branching processes with
  catastrophes.
\newblock {\em Electron. J. Probab}, 18(106):1--31, 2013.

\bibitem{Bansaye11}
V. Bansaye and V.C. Tran.
\newblock Branching feller diffusion for cell division with parasite infection.
\newblock {\em ALEA, Lat. Am. J. Probab. Math. Stat.}, 8:95--127, 2011.

\bibitem{Bercu09}
B. Bercu, B. De~Saporta, and A. Gegout-Petit.
\newblock Asymptotic analysis for bifurcating autoregressive processes via a
  martingale approach.
\newblock {\em Electronic Journal of Probability}, 14(87):2492--2526, 2009.

\bibitem{Bertero}
M. Bertero and P. Bocacci.
\newblock Introduction to inverse problems in imaging.
\newblock {\em Institute of Physics Publishing, Bristol}, 1998.

\bibitem{Billingsley68}
P. Billingsley.
\newblock {\em Convergence of probability measures}.
\newblock MR0233396. John Wiley \& Sons, 1968.

\bibitem{BitsekiPenda15}
S.V. Bitseki Penda.
\newblock Deviation inequalities for bifurcating markov chains on galton-
  watson tree.
\newblock {\em ESAIM: Probability and Statistics}, 19:689--724, 2015.


\bibitem{Bourgeron14}
T. Bourgeron, M. Doumic, and M. Escobedo.
\newblock Estimating the division rate of the growth-fragmentation equation with
  a self-similar kernel.
\newblock {\em Inverse Problems}, 30(2):025007, 2014.

\bibitem{butucea}
C. Butucea.
\newblock Deconvolution of supersmooth densities with smooth noise.
\newblock {\em The Canadian Journal of Statistics}, 32(2):181-192, 2004.

\bibitem{Byrne}
C. L. Byrne.
\newblock Signal processing. A mathematical approach.
\newblock {\em CRC Press, Boca Raton, FL}, 2015.


\bibitem{Cloez11}
B. Cloez.
\newblock Limit theorems for some branching measure-valued processes.
\newblock {\em Advances in Applied Probability}, 49(2), 549-580, 2017.

\bibitem{ComteGenonCatalot}
F.~ Comte and V.~ Genon-Catalot.
\newblock Nonparametric drift estimation for i.i.d. paths of stochastic differential equations.
\newblock{\em Ann. Statistics}, to appear, 2020.

\bibitem{ComteLacour}
F.~Comte and C.~Lacour.
\newblock Anisotropic adaptive kernel deconvolution.
\newblock {\em Ann. Inst. Henri Poincar\'e Probab. Stat.}, 49(2):569--609,
  2013.

\bibitem{Comte2011}
F. Comte and C. Lacour.
\newblock Data driven density estimation in presence of unknown convolution
  operator.
\newblock {\em Journal of the Royal Statistical Society: Series B},
  73(4):601--627, 2011.

\bibitem{comte2014deconvolution}
F. Comte, A. Samson, and J.J. Stirnemann.
\newblock Deconvolution estimation of onset of pregnancy with replicate
  observations.
\newblock {\em Scandinavian Journal of Statistics}, 41(2):325--345, 2014.

\bibitem{Delmas10}
J.-F. Delmas and L. Marsalle.
\newblock Detection of cellular aging in a galton--watson process.
\newblock {\em Stochastic Processes and their Applications},
  120(12):2495--2519, 2010.

\bibitem{Doumic17}
M. Doumic, M. Escobedo and M. Tournus.
\newblock Estimating the division rate and kernel in the fragmentation
  equation.
\newblock {\em Annales de l'Institut Henri Poicaré, Analyse Non Lin\'eaire}, 2018.
\newblock 35:1847--1884, 2018.

\bibitem{Doumic10}
M. Doumic and P. Gabriel.
\newblock Eigenelements of a general aggregation-fragmentation model.
\newblock {\em Mathematical Models and Methods in Applied Sciences},
  20(05):757--783, 2010.

\bibitem{DoumicHoffmann12}
M. Doumic, M. Hoffmann, N. Krell, L. Robert, et~al.
\newblock Statistical estimation of a growth-fragmentation model observed on a
  genealogical tree.
\newblock {\em Bernoulli}, 21(3):1760--1799, 2015.

\bibitem{Doumic12}
M. Doumic, M. Hoffmann, P. Reynaud-Bouret and V. Rivoirard.
\newblock Nonparametric estimation of the division rate of a size-structured
  population.
\newblock {\em SIAM Journal of Numerical Analyis}, 50(2):925--950, 2012.

\bibitem{Doumic09}
M. Doumic, B. Perthame, and J.P. Zubelli.
\newblock Numerical solution of an inverse problem in size-structured
  population dynamics.
\newblock {\em Inverse Problems}, 25(4):045008, 2009.

\bibitem{DoumicOlivierRobert}
M. Doumic, A. Olivier and L. Robert.
\newblock Estimating the division rate from indirect measurements of single cells.
\newblock https://arxiv.org/abs/1907.05108, 2019.

\bibitem{Epstein}
B. Epstein.
\newblock Some applications of the {M}ellin transform in {S}tatistics.
\newblock {\em The Annals of Mathematical Statistics}, 19(3):370-379, 1948.


\bibitem{EthierKurtz09}
S.N. Ethier and T.G. Kurtz.
\newblock {\em Markov processes: characterization and convergence}, volume 282.
\newblock John Wiley \& Sons, 2009.

\bibitem{Evans}
S.N. Evans and D. Steinsaltz.
\newblock Damage segregation at fissioning may increase growth rates: A superprocess model.
\newblock {\em Theoret. Popul. Biol.}, 71:473–490, 2007.

\bibitem{Fan}
J. Fan and J.-Y. Koo.
\newblock Wavelet Deconvolution.
\newblock {\em IEEE Transactions on Information Theory}, 48(3):734--747, 2002

\bibitem{GL13}
A.~V. Goldenshluger and O.~V. Lepski.
\newblock General selection rule from a family of linear estimators.
\newblock {\em Theory Probab. Appl.}, 57(2):209--226, 2013.

\bibitem{Guyon07}
J. Guyon.
\newblock Limit theorems for bifurcating markov chains. application to the
  detection of cellular aging.
\newblock {\em The Annals of Applied Probability}, 17(5/6):1538--1569, 2007.

\bibitem{Harris63}
T.E. Harris.
\newblock {\em The theory of branching processes}.
\newblock Springer, Berlin, 1963.

\bibitem{henrici}
P.~Henrici.
\newblock {\em Applied and computational complex analysis}.
\newblock John Wiley \& sons, 1997.

\bibitem{HoangPhD}
V.H. Hoang.
\newblock {\em Estimation adaptative pour des probl\`emes inverses avec des
  applications \`a la division cellulaire}.
\newblock PhD thesis, Universit\'e de Lille 1, 2016.

\bibitem{Hoang2015}
V.H. Hoang.
\newblock Estimating the division kernel of a size-structured population. to
  appear.
\newblock {\em ESAIM: Probability and Statistics (ESAIM: P\&S)}, 2017.

\bibitem{Hoffmann14}
M. Hoffmann and A. Olivier.
\newblock Nonparametric estimation of the division rate of an age dependent
  branching process.
\newblock {\em Stochastic Processes and their Applications}, 126(5):1433--1471,
  2016.

\bibitem{Jagers69}
P. Jagers.
\newblock A general stochastic model for population development.
\newblock {\em Scandinavian Actuarial Journal}, 1969(1-2):84--103, 1969.

\bibitem{Joffe86}
A. Joffe and M. M{\'e}tivier.
\newblock Weak convergence of sequences of semimartingales with applications to
  multitype branching processes.
\newblock {\em Advances in Applied Probability}, pages 20--65, 1986.

\bibitem{Jourdain12}
B. Jourdain, S. M\'el\'eard, and W.A. Woyczynski.
\newblock L\'evy flights in evolutionary ecology.
\newblock {\em Journal of Mathematical Biology}, 65(4):677--707, 2012.

\bibitem{PCO_2016}
C.~Lacour, P.~Massart, and V.~Rivoirard.
\newblock Estimator selection: a new method with applications to kernel density
  estimation.
\newblock {\em Sankhya}, 79(2):298--335, 2017.

\bibitem{Massart03}
P. Massart.
\newblock {\em Concentration Inequalities and Model Selection}.
\newblock Lectures from the 33rd Summer School on Probability Theory held in
  Saint-Flour, 6-23, 2003. Springer, 2007.

\bibitem{meister}
A. Meister.
\newblock {\em Deconvolution Problems in Nonparametric Statistics}.
\newblock Lecture Notes in Statistics. Springer, 2009.

\bibitem{Meleard93}
S. M\'{e}l\'{e}ard and S. Roelly.
\newblock Sur les convergences \'{e}troite ou vague de processus \`{a} valeur
  mesures.
\newblock {\em Comptes Rendus de l'Academie des Sciences}, Serie I
  317:785--788, 1993.

\bibitem{Meleard12}
S. M\'{e}l\'{e}ard and V.C. Tran.
\newblock Slow and fast scales for superprocess limits of age-structured
  populations.
\newblock {\em Stochastic Processes and their Applications}, 122:250--276,
  2012.

\bibitem{metztran}
J.A.J. Metz and V.C. Tran.
\newblock Daphnias: from the individual based model to the large population
  equation.
\newblock {\em Journal of Mathematical Biology}, 66(4-5):915--933, 2013.
\newblock Special issue in honor of Odo Diekmann.

\bibitem{moseley}
J.B. Moseley,
\newblock Cellular Aging: Symmetry Evades Senescence.
\newblock {\em Current Biology}, 23(19):R871--R873, 2013.

\bibitem{Neumann97}
M.H. Neumann.
\newblock On the effect of estimating the error density in nonparametric
  deconvolution.
\newblock {\em Journal of Nonparametric Statistics}, 7(4):307--330, 1997.


\bibitem{penskyvidakovic}
M. Pensky and B. Vidakovic.
\newblock Adaptive wavelet estimator for nonparametric density deconvolution.
\newblock {\em Ann. Statist.}, 27(6):2033--2053, 1999.


\bibitem{perthameryzhik}
B.~Perthame and L.~Ryzhik.
\newblock Exponential decay for the fragmentation or cell-division equation.
\newblock {\em Journal of the Differential Equations}, 210:155--177, 2005.

\bibitem{Perthame07}
B. Perthame.
\newblock {\em Transport equations arising in biology}.
\newblock Frontiers in Mathematics. Birchkhauser, 2007.

\bibitem{robert14}
L. Robert , M. Hoffmann , N. Krell , S. Aymerich , M. Doumic
\newblock Division in Escherichia coli is triggered by a size-sensing rather than a timing mechanism.
\newblock {\em BMC Biology}, 12:17, 2014.


\bibitem{stewart}E.J. Stewart, R. Madden, G. Paul and F. Taddei.
\newblock Aging and Death in an Organism That Reproduces by Morphologically
Symmetric Division.
\newblock {\em PLOS Biology}, 3, 2005.

\bibitem{Titchmarsh}
E.C. Titchmarsh.
\newblock {\em Introduction to the Theory of Fourier Integrals}.
\newblock Oxford University Press, Second ed., 1948.

\bibitem{Tran06}
V.C. Tran.
\newblock {\em Mod\`{e}les particulaires stochastiques pour des probl\`{e}mes
  d'\'{e}volution adaptive et pour l'approximation de solutions statisques}.
\newblock {PhD} dissertation, Universit\'{e} Paris X - Nanterre,
  http://tel.archives-ouvertes.fr/tel-00125100, 2006.

\bibitem{Tran08}
V.C. Tran.
\newblock Large population limit and time behaviour of a stochastic particle
  model describing an age-structured population.
\newblock {\em ESAIM: Probability and Statistics}, 12:345--386, 2008.

\bibitem{Tran14}
V.C. Tran.
\newblock {\em Une ballade en for\^ets al\'eatoires}.
\newblock {HDR} dissertation, Universit\'e Lille 1,
  https://tel.archives-ouvertes.fr/tel-01087229v1, 2014.

\bibitem{wang}
P. Wang, L. Robert, J. Pelletier, W.L. Dang, F. Taddei, A. Wright and S. Jun.
\newblock Robust growth of Escherichia coli.
\newblock {\em Current biology}, 20(12):1099-1103, 2010.

\end{thebibliography}

}
\end{document}